\begin{document}

\title[Discrete harmonic maps]
      {Uniformizing surfaces via discrete harmonic maps}
     
\author[T. Kajigaya]{Toru Kajigaya}
\author[R. Tanaka]{Ryokichi Tanaka} 
\address{Department of mathematics, School of Engineering, Tokyo Denki University,
5 Senju Asahi-cho, Adachi-ku, Tokyo 120-8551, JAPAN\endgraf
National Institute of Advanced Industrial Science and Technology (AIST), MathAM-OIL, Sendai 980-8577 
JAPAN
}
\email{kajigaya@mail.dendai.ac.jp}
\address{Mathematical  Institute,  Tohoku  University,  6-3  Aza-Aoba,
Aramaki,  Aoba-ku,
Sendai 980-8578 JAPAN}

\email{ryokichi.tanaka@tohoku.ac.jp}
\date{\today}

\subjclass[2010]{Primary 32G15; Secondary 58E20, 05C10}
\keywords{Discrete harmonic maps, finite weighted graphs, hyperbolic surfaces, Weil-Petersson geometry of Teichm\"uller spaces}

\maketitle

\begin{abstract}
We show that for any closed surface of genus greater than one and for any finite weighted graph filling the surface, there exists a hyperbolic metric which realizes the least Dirichlet energy harmonic embedding of the graph among a fixed homotopy class and all hyperbolic metrics on the surface.
We give explicit examples of such hyperbolic surfaces 
through a new interpretation
of the Nielsen realization problem for the mapping class groups.
\end{abstract}

\section{Introduction}

The Koebe-Andreev-Thurston theorem states that any skeleton of triangulation for an oriented compact surface arises as the contact graph of a circle packing on a geometrized surface (\cite[Section 13.7]{Thurston}; see also \cite{ColindeVerdiere}).
In the case of a closed Riemann surface of genus greater than one, one obtains a hyperbolic metric which realizes given combinatorial structure by a circle pattern on the surface.
The hyperbolic metric obtained is unique up to isometries, and this theorem is also called {\it discrete uniformization theorem} \cite[Chapter 4]{Stephenson}.
In this article, we offer yet another way of endowing a hyperbolic metric on a surface in a manner that given weighted graph is realized as the image of a harmonic map with the least energy among a fixed homotopy class as well as all the hyperbolic metrics.

Let $X=(V, E, m_E)$ be a finite weighted graph with $m_E: E \to (0, \infty)$.
We identify each edge $e$ with the unit interval $[0, 1]$, and denote by $\wbar e$ its reversed edge.
We understand $E$ the set of all oriented edges $e$, $\wbar e \in E$, and the weight function $m_E$ is symmetric, i.e., $m_E(e)=m_E(\wbar e)$.

For any closed Riemann surface $(S, G)$ equipped with a hyperbolic metric $G$,
let $f:X \to (S, G)$ be a piecewise smooth map, i.e., the restriction $f_e:[0, 1] \to (S, G)$ is piecewise smooth for each edge $e$.
We define the {\it energy} of $f$ by
\begin{equation}\label{Eq:Dirichlet}
E_G(f):=\frac{1}{2}\sum_{e \in E}m_E(e)\int_0^1\left\|\frac{d f_e}{dt}(t)\right\|_G^2\,dt.
\end{equation}
It is known that the energy $E_G(f)$ attains the minimum by a harmonic map $h_G$ among the set of all piecewise smooth maps homotopic to $f$ 
(Section \ref{Sec:harmonic}).
A \textit{harmonic map} $h$ from $X$ into a hyperbolic surface $(S, G)$ is a map such that $h_e$ is a (constant speed) geodesic for each $e \in E$, and satisfies the \textit{balanced condition}
\[
\sum_{e \in E_x}m_E(e)\frac{d h_e}{dt}(0)=0, \quad \text{for each $x \in V$},
\]
where $E_x=\{e \in E \ : \ e_0=x\}$, the set of edges emanating from $x$.
Moreover, if a harmonic map $h_G$ is not homotopic to a point nor to a closed curve in $(S, G)$, then $h_G$ is unique as a map due to the negative curvature of $G$.
Then, regarding the energy $E_G(h_G)$ of the harmonic map $h_G$ as a function of hyperbolic metrics $G$ on $S$,
we may further minimize $E_G(h_G)$ given a fixed homotopy class $\Cc$ of $f$.
A central question we consider is whether the energy $E_G(f)$ attains the minimum for a pair $(h_0, G_0)$ of a map $h_0: X \to S$ in $\Cc$ and a hyperbolic metric $G_0$, or not.
We show that under some necessary topological condition on $f:X \to S$, the joint minimum of $E_G(f)$ does exist;
namely, we solve the double minimization problem where the first minimization is the classical calculus of variation and the second minimization is the variation among the hyperbolic metrics.
Furthermore, we identify the hyperbolic metric $G$ when the underlying weighted graph $X$ admits an automorphism group which is compatible with the mapping class group of $S$.

Note that for $i=1, 2$, any two pairs $(h_i, G_i)$ of a hyperbolic metric $G_i$ on $S$ and a harmonic map $h_i:X \to (S, G_i)$ in $\Cc$ have the same energy $E_{G_1}(h_{1})=E_{G_2}(h_{2})$ if there exists an isometry $\f: (S, G_1) \to (S, G_2)$ homotopic to the identity map on $S$ and $h_2=\f \circ h_1$.
It holds that if a pair $(h, G)$ attains the minimum of the energy $E_G(h)$, then the hyperbolic metric $G$ on $S$ is unique up to isometries homotopic to the identity map on $S$ under some condition on the fixed homotopy class $\Cc$ as we see below.

We say that a continuous map $f:X \to S$ {\it fills} $S$ if it is injective and the complement of the image $f$ is a disjoint union of (topological) disks.
For example, any skeleton of triangulation of $S$ gives rise to such $X$ and $f$.
Let us denote the set of all piecewise smooth maps homotopic to $f$ by $\Cc=[f]$, and the space of hyperbolic metrics on $S$ by $\Mc_{-1}(S)$.
We define
\[
\Ec: \Cc \times \Mc_{-1}(S) \to [0, \infty), \qquad (f, G) \mapsto E_G(f).
\]
Note that for any piecewise smooth map $f: X \to (S, G)$, we have $E_G(f)<\infty$, and any continuous map $f$ is homotopic to a piecewise smooth map.

\begin{theorem}\label{Thm:main}
Let $X=(V, E, m_E)$ be a connected finite weighted graph, and $S$ be a closed Riemann surface of genus greater than one.
If $f:X \to S$ is a continuous map which fills $S$,
then there exists a pair $(h_0, G_0)$ which attains the minimum of $\Ec$ on $\Cc \times \Mc_{-1}(S)$,
where $\Cc$ is the homotopy class of $f$. 
Moreover, the hyperbolic metric $G_0$ on $S$ is unique up to isometries homotopic to the identity map on $S$.
\end{theorem}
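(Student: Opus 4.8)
The plan is to decouple the double minimization by carrying out the inner variation first and thereby reducing the problem to a single function on Teichm\"uller space. As recalled in the statement, for each hyperbolic metric $G$ the inner infimum $\inf_{f'\in\Cc}E_G(f')$ is attained by a harmonic map $h_G$, which is unique because $f$ fills $S$ and hence is homotopic neither to a point nor to a closed curve. Replacing $G$ by an isometric metric via an isometry homotopic to the identity changes neither $\Cc$ nor this energy, so the assignment $G\mapsto E_G(h_G)$ descends to a well-defined function on the Teichm\"uller space $\mathcal{T}(S)=\Mc_{-1}(S)/\mathrm{Diff}_0(S)$. The theorem then amounts to showing that this function attains its minimum at a unique point of $\mathcal{T}(S)$. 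First I would record that it is continuous: since the harmonic map is unique and depends continuously on the target metric, so does its energy.

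The heart of the existence proof is properness, i.e. $E_G(h_G)\to\infty$ whenever $G$ leaves every compact subset of $\mathcal{T}(S)$. Because $h_G$ sends each edge to a constant-speed geodesic, $E_G(h_G)=\tfrac12\sum_{e}m_E(e)\,\ell_G(h_e)^{2}$, and Cauchy--Schwarz gives $E_G(h_G)\ge L(G)^{2}/(2M)$ with $L(G)=\sum_{e}m_E(e)\,\ell_G(h_e)$ and $M=\sum_{e}m_E(e)$; thus it suffices to prove that the total length $L$ is proper. Here the filling hypothesis is essential. If some essential simple closed curve $\gamma$ has $\ell_G(\gamma)\to 0$, the collar lemma provides an embedded collar about the $G$-geodesic of $\gamma$ whose width tends to infinity; since $f$ fills $S$, the image $h_G(X)$ cannot be homotoped off $\gamma$ and so must traverse this collar, forcing $L(G)\to\infty$. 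This controls escape toward the cusps of the moduli space $\mathcal{M}(S)$. To control the remaining mapping-class-group directions in $\mathcal{T}(S)$, I would use that a filling configuration has finite stabilizer in $\mathrm{MCG}(S)$, together with the discreteness of the length spectrum on a fixed surface, to conclude that along any sequence leaving compacta the relevant lengths still blow up. Properness together with continuity then yields a minimizing pair $(h_0,G_0)$.

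The main obstacle is precisely this properness step, and more specifically its mapping-class-group part: the collar estimate only establishes coercivity modulo $\mathrm{MCG}(S)$, so one must genuinely exploit that filling makes the stabilizer of $\Cc$ finite in order to promote near-cusp coercivity in $\mathcal{M}(S)$ to coercivity in every direction of $\mathcal{T}(S)$.

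For uniqueness I would pass to the Weil--Petersson geometry of $\mathcal{T}(S)$. By Wolpert's results the Weil--Petersson metric is geodesically convex and any two points are joined by a unique geodesic lying inside $\mathcal{T}(S)$, and I would show that $G\mapsto E_G(h_G)$ is strictly convex along Weil--Petersson geodesics. Expressing the energy through the geodesic lengths $\ell_G(h_e)$ and invoking the convexity of length functions along Weil--Petersson geodesics gives convexity, while the filling condition is what upgrades this to strict convexity by excluding flat directions. A strictly convex function on a uniquely geodesically connected space has at most one minimizer; since the Weil--Petersson geodesic between two competing minimizers remains in $\mathcal{T}(S)$, the minimizing metric $G_0$ is unique in $\mathcal{T}(S)$, that is, unique up to isometries homotopic to the identity map on $S$.
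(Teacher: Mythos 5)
The step that fails is the uniqueness part, i.e.\ the convexity argument. You propose to obtain convexity of $[G]\mapsto E_G(h_G)$ along Weil--Petersson geodesics by ``expressing the energy through the geodesic lengths $\ell_G(h_e)$ and invoking the convexity of length functions along Weil--Petersson geodesics.'' Wolpert's convexity theorem applies to geodesic length functions of free homotopy classes of \emph{closed} curves; the edges $h_{G,e}$ of the harmonic image are geodesic \emph{arcs} whose endpoints move with $G$ through the balanced condition, and their lengths are not geodesic length functions of closed curves, so that theorem simply does not apply to them. Worse, $E_G(h_G)=\min_{h\in\mathcal{C}}E_G(h)$ is a pointwise minimum over the homotopy class, and a minimum of convex functions need not be convex, so convexity cannot be inherited termwise in any case. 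This is exactly why the paper devotes Section~\ref{Sec:convex} to an independent second-variation computation: one differentiates $s\mapsto E_{G_s}(h_s)$ along a horizontal lift of a Weil--Petersson geodesic together with the family of harmonic maps $h_s$, uses harmonicity to remove the first-order terms, inserts Yamada's formula for $\partial_s^2 G_s|_{s=0}$ (Proposition~\ref{key1}), and proves the key inequality $-\tfrac{1}{4}(\Delta_{G_0}-2)^{-1}\|Q\|_{G_0}^2\ge \tfrac{1}{24}\|Q\|_{G_0}^2$ using that $Q$ corresponds to a holomorphic quadratic differential (Lemma~\ref{Lem:Qholomorphic}). Note also that strictness comes from the isolated zeros of that holomorphic differential, not from the filling hypothesis: Theorem~\ref{Thm:convex} requires only that the image of $f$ is not homotopic to a point or a closed curve, and filling enters the main theorem only through properness, via surjectivity of $f_\ast$ on $\pi_1$ (Lemma~\ref{Lem:fill}).

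On properness, your moduli-space analysis (Cauchy--Schwarz plus the collar lemma) matches the paper's, but the Teichm\"uller-space part, which you correctly flag as the main obstacle, remains a sketch: ``finite stabilizer plus discreteness of the length spectrum'' is not yet an argument, because $\mathcal{E}_{\mathcal{C}}$ is not $\mathrm{Mod}(S)$-invariant and one must control the energy along an entire escaping mapping-class orbit, uniformly over a compact piece of moduli space. Your sketch can be completed, for instance, by noting that the energy dominates the squared geodesic lengths of the closed curves $f(c_i)$ representing a filling family, and that the total length of a filling family of closed curves is a proper function on $\mathcal{T}(S)$; this would give a route genuinely different from the paper's. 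The paper instead argues (proof of Theorem~\ref{Thm:proper}) by Mumford compactness in moduli space, then an Ascoli--Arzel\`a argument showing that the corrected harmonic maps $\hat\varphi_i\circ h_i$ are eventually homotopic to one another, and finally Lemma~\ref{Lem:homotopic} (if $f_\ast$ is $\pi_1$-surjective and $\varphi_1\circ f\simeq\varphi_2\circ f$, then $\varphi_1$ and $\varphi_2$ are isotopic) to force the correcting mapping classes to stabilize, so the sequence converges in $\mathcal{T}(S)$ with limit in the sublevel set. Either route must be carried out in full; as written, both the convexity step and the properness step of your proposal have gaps, the convexity one being irreparable by the tool you cite.
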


In fact, we prove a more general theorem when $f$ induces a surjective homomorphism $f_\ast: \pi_1(X, x_0) \to \pi_1(S, f(x_0))$ in Theorem \ref{Thm:general_main}.
The unique hyperbolic metric $G_0$ in Theorem \ref{Thm:main} realizes the least energy harmonic embedding of $X$ into the hyperbolic surface $(S, G_0)$ determined by the weighted graph $X$ and the fixed homotopy class of maps $f: X \to S$.

The problem of minimizing energy \eqref{Eq:Dirichlet} was studied by Colin de Verdi\`ere \cite{ColindeVerdiereComment} for embedding of finite weighted graphs into surfaces.
The problem of minimizing energy also in hyperbolic metrics was suggested by Kotani and Sunada \cite[p.7, Section 2]{KotaniSunadaStandard} and has been reiterated by Sunada \cite[p.124, Section 7.7]{SunadaTopological}. 
They showed the corresponding result in the case of flat tori (of dimension at least $2$);
for a connected finite weighted graph $X$, and for an $n$-dimensional torus $\T^n$, 
if $f:X \to \T^n$ induces a surjective homomorphism from $\pi_1(X)$ to $\pi_1(\T^n)$,
then there exists a pair of flat metric $G_0$ on $\T^n$ and a harmonic map $h_0: X \to (\T^n, G_0)$ homotopic to $f$ such that
the corresponding energy functional $\Ec$ attains the minimum on $\Cc \times \Mc_0(\T^n)$, where $\Cc=[f]$ and $\Mc_0(\T^n)$ is the space of flat metrics on $\T^n$ whose volume is normalized to $1$ (and such a pair is unique up to translations on $\T^n$).
In fact, they proved the result by constructing a flat metric explicitly by using the $1$-homology group of graph $X$, and called the resulting harmonic map $h_0$ into $(\T^n, G_0)$ (or, its equivariant lift on an abelian covering of $X$ into $\R^n$) the \textit{standard realization} of $X$.
Then, they proposed the problem in the case of closed hyperbolic surfaces whether such an energy functional has a minimum as a non-Euclidean analogue to their result in dimension $2$.
It seems that a direct construction of such a pair of metric and harmonic map into closed hyperbolic surfaces would not work by a non-linear nature of target spaces.

Our approach to show the existence of a minimum pair of energy functional $\Ec$ for surfaces is that we define a function $\Ec_\Cc$ associated to $\Ec$ on the Teichm\"uller space $\Tc(S)$.
Namely, since the energy $E_G(f)$ attains the minimum at a harmonic map $h_G$ in the homotopy class $\Cc$ of $f$,
the function $G \mapsto E_G(h_G)$ is defined on the space of hyperbolic metrics $\Mc_{-1}(S)$.
This function naturally yields a function on the Teichm\"uller space $\Tc(S)$ of $S$.
Recalling that the Teichm\"uller space $\Tc(S)$ consists of isotopy classes of pairs $((\SS, G_{\SS}), \f)$ of a hyperbolic surface $(\SS, G_{\SS})$ and a diffeomorphism (a marking) $\f:S \to \SS$, we write $G=\f^\ast G_\SS$ the pull-back metric of $G_\SS$ on $S$.
Then, the function  
\[
\Ec_\Cc([G]):=E_{G}(h_G),
\]
is well-defined on $\Tc(S)$ (Section \ref{Sec:energy}).
We show that the function $\Ec_\Cc$ on $\Tc(S)$ is strictly convex with respect to the Weil-Petersson metric (Theorem \ref{Thm:convex}) and proper if $f$ induces a surjective homomorphism from $\pi_1(X)$ to $\pi_1(S)$ (Theorem \ref{Thm:proper}).
This shows that a minimum of the function $\Ec_\Cc$ on $\Tc(S)$ exists uniquely and we show that it also yields a minimum pair of the original energy functional $\Ec$.

Actually, the result we show is a discrete analogue of a result by Yamada;
he proved that if the domain is a general closed Riemannian manifold $M$ and $f:M \to (S, G)$ is a continuous map to a closed hyperbolic surface inducing a surjective homomorphism between fundamental groups, then the Dirichlet energy functional evaluated at a (unique) harmonic map homotopic to $f$ is proper and strictly convex on the Teichm\"uller space with respect to the Weil-Petersson metric \cite[Theorem 3.2.1]{Yamada} (see also expositions \cite{Yamada2} and \cite{YamadaSugaku}, and the recent paper \cite{Kim_et_al}).
We mostly follow the strategy by Yamada to show the convexity and the properness of energy functional $\Ec_\Cc$; but we give an independent proof for the convexity that enables us to establish an explicit Hessian formula of $\Ec_\Cc$ (Theorem \ref{Thm:Hessian}).
It is also known that such a functional on the Teichm\"uller space $\Tc(S)$ has various applications to, for example, the Nielsen realization problem for the mapping class groups and 
the fact that $\Tc(S)$ is Stein (\cite{KerckhoffNielsen}, \cite{TrombaBook} and \cite{WolpertNielsen}).
Our construction provides a family of proper convex energy functionals on $\Tc(S)$ from finite weighted graphs; 
this also solves the Nielsen realization problem (Remark \ref{Rem:Nielsen}). 
Moreover, since the functional is defined in terms of a finite graph, 
in many cases,
we are able to find explicit hyperbolic surfaces as minima of the energy functionals $\Ec_\Cc$ as fixed points of finite subgroups in the mapping class groups.

More precisely,
let $\Aut(X)$ be the group of automorphisms $\s$ of a finite weighted graph $X=(V, E, m_E)$,
where $\s$ is a bijection to $V$ to itself, preserves edges with $\wbar{\s e}=\s \wbar e$ and $m_E(\s e)=m_E(e)$ for $e \in E$.
Recall that the mapping class group $\Mod(S)$ is the group of isotopy classes of orientation-preserving homeomorphisms.
For any continuous map $f:X \to S$, we define a subgroup of $\Mod(S)$ by
\[
\Gc_{[f]}(S):=\Big\{[\f] \in \Mod(S) \ : \ \text{$\f \circ f \simeq f \circ \s$ for some $\s \in \Aut(X)$}\Big\},
\]
where $f_1 \simeq f_2$ means that $f_1$ and $f_2$ are homotopic.
Note that $\Gc_{[f]}(S)$ is actually a well-defined group which depends only on the homotopy class of $f$.
The group $\Gc_{[f]}(S)$ is possibly trivial, but if it is not trivial and is large enough, then it is used to determine the minimizer of $\Ec_\Cc$ for $\Cc=[f]$.
We show that the group $\Gc_{[f]}(S)$ is finite if $f$ fills $S$ (Lemmas \ref{Lem:fill} and \ref{Lem:finiteness}).

\begin{theorem}\label{Thm:Nielsen}
Let $S$ be a closed Riemann surface of genus greater than one,
$f: X \to S$ be a continuous map which fills $S$ and $\Cc=[f]$ be the homotopy class of $f$.
If $[(\SS_0, G_0), \f_0]$ is the unique minimizer of $\Ec_\Cc:\Tc(S) \to [0, \infty)$,
then
there exists a group $\wt \Gc_{0}$ of isometries on $(S, \f_0^\ast G_0)$ such that
the natural map
\[
\wt \Gc_{0} \to \Gc_{[f]}(S), \quad \wt \f \mapsto [\wt \f],
\]
gives an isomorphism of groups.
Moreover, 
if $h_0: X \to (S, \f_0^\ast G_0)$ is the unique harmonic map in the homotopy class $\Cc=[f]$,
then for any $\wt \f$ in $\wt \Gc_{0}$, there exists $\s_{[\wt \f]}$ in $\Aut(X)$ such that
$\wt \f \circ h_0=h_0 \circ \s_{[\wt \f]}$.
\end{theorem}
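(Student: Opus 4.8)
The plan is to exploit the symmetry of the energy functional $\Ec_\Cc$ under the natural action of the mapping class group on $\Tc(S)$, under which $[\psi]$ acts by precomposition of the marking, $[\psi]\cdot[(\SS,G_\SS),\f]=[(\SS,G_\SS),\f\circ\psi^{-1}]$. First I would prove that $\Ec_\Cc$ is invariant under the restriction of this action to the subgroup $\Gc_{[f]}(S)$. Granting this, since $\Ec_\Cc$ is strictly convex and proper (Theorems \ref{Thm:convex} and \ref{Thm:proper}), it has a \emph{unique} minimizer; applying any $[\psi]\in\Gc_{[f]}(S)$ produces another minimizer, which by uniqueness must coincide with $[(\SS_0,G_0),\f_0]$. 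Hence the minimizer is a fixed point of $\Gc_{[f]}(S)$, and the whole theorem reduces to reading off what this fixed-point property says on the surface $(S,\f_0^\ast G_0)$.

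For the invariance, fix $[\psi]\in\Gc_{[f]}(S)$ and choose $\s\in\Aut(X)$ with $\psi\circ f\simeq f\circ\s$. Writing $G=\f^\ast G_\SS$ and $G'=(\psi^{-1})^\ast G$, so that $[\psi]\cdot[(\SS,G_\SS),\f]$ has pulled-back metric $G'$, the map $\psi:(S,G)\to(S,G')$ is an isometry. Then $\psi\circ h_G\circ\s^{-1}$ is a harmonic map into $(S,G')$: composing the harmonic map $h_G$ with the target isometry $\psi$ and the domain automorphism $\s^{-1}$ again sends edges to geodesics and preserves the balanced condition, since $m_E$ is $\Aut(X)$-invariant. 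Its homotopy class is $\psi\circ f\circ\s^{-1}\simeq f$, so by uniqueness of the harmonic map in the filling class $\Cc=[f]$ we obtain $h_{G'}=\psi\circ h_G\circ\s^{-1}$. As the target isometry $\psi$ preserves the integrand of \eqref{Eq:Dirichlet} and precomposition with $\s^{-1}$ merely permutes edges of equal weight, we conclude $E_{G'}(h_{G'})=E_G(h_G)$, i.e. $\Ec_\Cc$ is $\Gc_{[f]}(S)$-invariant.

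Finally I would unwind the fixed-point property. Setting $G_0':=\f_0^\ast G_0$ and identifying $(S,G_0')$ with $(\SS_0,G_0)$ via $\f_0$, the equality $[\psi]\cdot[(\SS_0,G_0),\f_0]=[(\SS_0,G_0),\f_0]$ means precisely that the mapping class $[\psi]$ is realized by an isometry $\wt\f$ of $(S,G_0')$, that is $\wt\f\simeq\psi$. Since $S$ has genus at least two, the natural homomorphism $\mathrm{Isom}^+(S,G_0')\to\Mod(S)$, $\wt\f\mapsto[\wt\f]$, is injective, as no nontrivial isometry of a hyperbolic surface is isotopic to the identity. Defining $\wt\Gc_0$ to be the preimage of $\Gc_{[f]}(S)$ under this homomorphism makes $\wt\f\mapsto[\wt\f]$ injective by construction and surjective by the fixed-point realization above, hence a group isomorphism $\wt\Gc_0\cong\Gc_{[f]}(S)$. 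For the last assertion, given $\wt\f\in\wt\Gc_0$ with $[\wt\f]=[\psi]\in\Gc_{[f]}(S)$, pick $\s_{[\wt\f]}\in\Aut(X)$ with $\psi\circ f\simeq f\circ\s_{[\wt\f]}$; then both $\wt\f\circ h_0$ (isometry after harmonic map) and $h_0\circ\s_{[\wt\f]}$ (harmonic map after automorphism) are harmonic maps homotopic to $f\circ\s_{[\wt\f]}$, which still fills $S$ because $\s_{[\wt\f]}$ is a homeomorphism of $X$, so uniqueness forces $\wt\f\circ h_0=h_0\circ\s_{[\wt\f]}$. I expect the invariance step to be the main obstacle: it requires simultaneously matching the target-isometry and domain-automorphism symmetries through the uniqueness of harmonic maps, and verifying that $f\circ\s$ (hence every homotopy class that appears) remains filling so that the nondegeneracy hypothesis for uniqueness is available; the passage from a Teichm\"uller fixed point to an honest isometry, via the rigidity of hyperbolic surfaces, is the other point demanding care with orientations and conventions.
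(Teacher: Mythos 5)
Your proof is correct, but it follows a genuinely different route from the paper's. The paper first forms the averaged functional $\wh \Ec_\Cc=\frac{1}{|\Gc|}\sum_{[\f]\in\Gc}\Ec_\Cc\circ[\f]$ --- which requires the finiteness of $\Gc_{[f]}(S)$ (Lemma \ref{Lem:finiteness}) --- obtains its unique minimizer as a fixed point of $\Gc_{[f]}(S)$, realizes the group by isometries there, and only afterwards shows that this minimizer coincides with the minimizer of $\Ec_\Cc$, by proving the inequality $\Ec_\Cc([\f_0^\ast G_0])\ge\Ec_\Cc([\f]\cdot[\f_0^\ast G_0])$ at the minimizer and comparing the two functionals at the two minimizers. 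You instead prove outright that $\Ec_\Cc$ itself is invariant under $\Gc_{[f]}(S)$: your identity $h_{G'}=\psi\circ h_G\circ\s^{-1}$ is exactly the equality hiding behind the paper's inequality (which, by uniqueness of harmonic maps, is in fact an equality there too). With direct invariance in hand, the unique minimizer of $\Ec_\Cc$ is immediately a fixed point, and no auxiliary functional or comparison of minimizers is needed; in particular your argument never invokes Lemma \ref{Lem:finiteness}, and instead recovers the finiteness of $\Gc_{[f]}(S)$ a posteriori from the finiteness of the isometry group of a closed hyperbolic surface. What the paper's averaging buys in exchange is generality: the first half of its proof works verbatim for an \emph{arbitrary} finite subgroup of $\Mod(S)$, which is how the paper also solves the Nielsen realization problem (Remark \ref{Rem:Nielsen}), whereas your invariance argument is tied to the specific subgroup $\Gc_{[f]}(S)$. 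The remaining steps in your write-up --- translating the Teichm\"uller fixed-point condition into a realization of $[\psi]$ by an isometry of $(S,\f_0^\ast G_0)$, the injectivity of $\mathrm{Isom}^+(S,\f_0^\ast G_0)\to\Mod(S)$, and the equivariance $\wt\f\circ h_0=h_0\circ\s_{[\wt\f]}$ via uniqueness of harmonic maps in the filling class $[f\circ\s_{[\wt\f]}]$ --- match the paper's argument, and your observation that $f\circ\s$ remains filling correctly supplies the nondegeneracy hypothesis needed for that uniqueness.
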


The proof is given in Section \ref{Sec:MCG}.
Theorem \ref{Thm:Nielsen} shows that several classical examples of hyperbolic surfaces arise as minima of some energy functionals $\Ec_\Cc$ defined for some finite weighted graphs $X$.
We discuss examples associated to pairs of pants decompositions and triangle tessellations in Section \ref{Sec:examples}.
Let us give one simple example:
for any integer $g \ge 2$,
consider the regular $4g$-gon $F$ with each inner angle $\pi/(2g)$.
Identifying opposite pairs of sides in the (induced) orientation-reversing way,
we obtain a closed surface of genus $g$ endowed with a hyperbolic metric, and denote it by $(S, G_{\reg})$.
We consider the bouquet graph $X=(V, E, m_E)$ with equal positive weight $m_E \equiv m$, where the underlying graph consists of a single vertex and $2g$ self-loops.
If we have the regular $4g$-gon $F$ and take the center of $F$ as a vertex and $2g$ (constant speed) geodesic segments passing through the center such that each line has two extremes lying on the midpoints in each pair of opposite sides, then we obtain an embedding map $f:X \to (S, G_{\reg})$ (Figure \ref{Fig:octagon}). 
Note that $f$ fills the surface $S$ since the complement of the image $f$ is an (open) $4g$-gon.
In this case, $f:X \to (S, G_{\reg})$ is a harmonic map; the image is a union of $2g$ closed geodesics at a point, and thus the balanced condition is automatically satisfied.
We claim that the hyperbolic surface $(S, G_{\reg})$ realizes the unique minimizer of $\Ec_\Cc$ for $\Cc=[f]$ with the harmonic map $f:X \to (S, G_\reg)$.

\begin{figure}[h!]
\centering
\includegraphics[width=80mm]{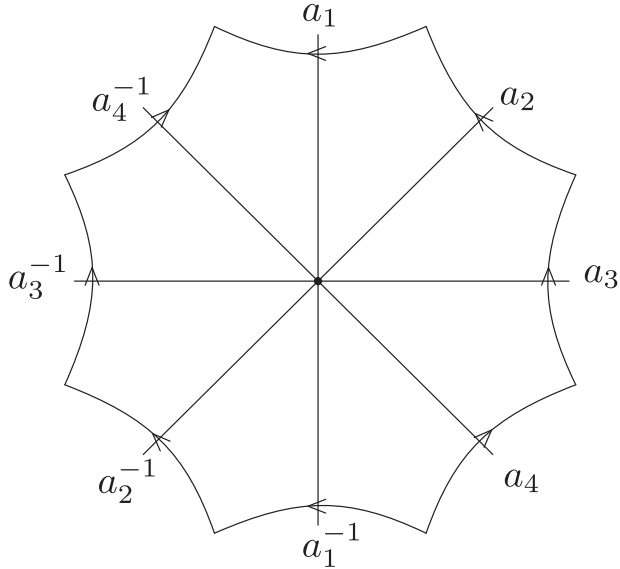}
\caption{A closed surface of genus $2$ with an embedded bouquet graph is obtained if we identify opposite sides of the octagon.}
\label{Fig:octagon}
\end{figure}

Indeed, for the regular $4g$-gon $F$ in the hyperbolic plane $\H^2$, the counter-clock wise rotation about the center of $F$ induces an element $\f$ of the mapping class group of the surface $S$ of order $4g$.
Theorem \ref{Thm:Nielsen} implies that if a surface with a harmonic map $h_0$ homotopic to $f$ realizes the minimizer of $\Ec_\Cc$ for $\Cc=[f]$,
then $\f$ is realized as an isometry and preserves the image of $h_0$.
Then, the hyperbolic surface has a fundamental domain which is a regular $4g$-gon in $\H^2$
and thus it is $(S, G_\reg)$.
Moreover, since the harmonic map $h_0$ is homotopic to $f$, the uniqueness of harmonic map implies that $h_0=f$. 
Therefore $(S, G_\reg)$ gives the unique minimizer of $\Ec_\Cc$ for $\Cc=[f]$ with the harmonic map $f$ as claimed.

Let us point out that Wolpert has studied a sum of length functionals for a filling family of closed curves in \cite{WolpertNielsen}.
There the functional is strictly convex with respect to the Weil-Petersson metric and proper on the Teichm\"uiller space,
and thus has a unique minimizer as our functional does. 
It is a natural question to describe the minimizer of Wolpert's functional.
In our simple example above,
in order to determine the minimizing hyperbolic surface of energy functional, 
we have used the uniqueness of harmonic map in a fixed homotopy class --- 
the fact in general not available for a sum of length functionals --- our approach based on Theorem \ref{Thm:Nielsen} does not apply to the question.
We discuss other explicit examples associated to pairs of pants decompositions and triangle tessellations in Section \ref{Sec:examples}.

The organization of this paper is the following:
in Section \ref{Sec:energyfunctionalTc}, we introduce and study the main energy functional $\Ec_\Cc$ on the Teichm\"uller space;
first we review results that we use on harmonic maps from finite weighted graphs into closed surfaces, following Kotani and Sunada \cite{KotaniSunadaStandard},
and then state results on the convexity (Theorem \ref{Thm:convex}) and the properness (Theorem \ref{Thm:proper}) of $\Ec_\Cc$,
and show the main result (a generalized form Theorem \ref{Thm:general_main} and Theorem \ref{Thm:main}) by combining these two theorems.
In Section \ref{Sec:convex}, we give the proof of Theorem \ref{Thm:convex} and also give a formula for the Hessian of $\Ec_\Cc$ (Theorem \ref{Thm:Hessian}), and in Section \ref{Sec:proper}, we prove Theorem \ref{Thm:proper}.
In Section \ref{Sec:MCG}, we discuss the action of a finite subgroup of the mapping class group on the Teichm\"uller space and prove Theorem \ref{Thm:Nielsen}.
In Section \ref{Sec:examples}, we provide examples which are minima of some energy functionals $\Ec_\Cc$ by using Theorem \ref{Thm:Nielsen}.
In Appendix, we prove technical results: the first and second variation formulas for finite weighted graphs and the smooth dependency of harmonic maps along a smooth one parameter family of metrics in a general closed Riemannian manifold target.

\section{The energy functional on the Teichm\"uller space}\label{Sec:energyfunctionalTc}

\subsection{Discrete harmonic maps}\label{Sec:harmonic}

Let $S$ be a closed Riemann surface of genus greater than one endowed with a hyperbolic metric $G$, and $X=(V, E, m_E)$ be a finite weighted graph.
Theorems by Colin de Verdi\`ere \cite{ColindeVerdiereComment} and by Kotani-Sunada in \cite{KotaniSunadaStandard} imply that for any continuous map $f:X \to S$, the minimum of energy $E_G$ is achieved by a harmonic map $h$ among all piecewise smooth maps $\Cc$ homotopic to $f$.
Moreover, $h$ is a harmonic map in $\Cc$ if and only if $h$ attains the minimum of the energy $E_G$ in $\Cc$.
We give a proof adapted to our setting for the sake of convenience.

\begin{theorem}[Th\'eor\`eme 1 in \cite{ColindeVerdiereComment} and Theorem 2.2, 2.3 and 2.5 in \cite{KotaniSunadaStandard}]\label{Thm:KSharmonic}
Fix a hyperbolic metric $G$ in $S$ and a  (not necessarily connected) finite weighted graph $X=(V, E, m_E)$.
For any continuous map $f:X \to (S, G)$, let $\Cc=[f]$ be the set of all piecewise smooth maps homotopic to $f$.
\begin{itemize}
\item[(i)] There exists a map $h$ such that the energy $E_G$ is the minimum in $\Cc$.
Moreover, if a map $h$ attains the minimum of $E_G$ in $\Cc$, then $h$ is a harmonic map.
\item[(ii)] For any harmonic maps $h_0$ and $h_1$ in $\Cc$, we have $E_G(h_0)=E_G(h_1)$.
\item[(iii)] If the image of $f$ on each connected component of $X$ is not homotopic to a point nor a closed circle, then there exists a unique harmonic map $h$ in $\Cc$ (as a map).
Moreover, the Hessian ${\rm Hess}_{E_G}$ of the energy functional $E_G$ at $h$ is non-degenerate.
\end{itemize}
\end{theorem}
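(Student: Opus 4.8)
The plan is to trade the infinite-dimensional variational problem for a finite-dimensional convex minimization on the universal cover, where the target is the Hadamard manifold $\H^2$. Write $\Gamma=\pi_1(S)$ as a cocompact Fuchsian group acting on $\H^2=\widetilde S$, let $\widetilde X$ be the universal cover of $X$ (a tree), and let $\rho=f_\ast:\pi_1(X)\to\Gamma$ be the homomorphism determined by $\Cc$. A map in $\Cc$ corresponds to a $\rho$-equivariant map $\widetilde f:\widetilde X\to\H^2$. Since in $\H^2$ any two points are joined by a unique geodesic and, for fixed endpoints, the constant-speed geodesic uniquely minimizes the path energy in its homotopy class rel endpoints, I would first replace any competitor by the piecewise-geodesic map with the same vertex images: this leaves $\Cc$ invariant and does not increase $E_G$. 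Such a map is determined by the images $p_x\in\H^2$ of one chosen lift of each vertex $x\in V$, so the energy reduces to
\[
\Phi(p)\;=\;\frac12\sum_{e\in E}m_E(e)\,d\bigl(p_{e_0},\,\rho(e)\,p_{e_1}\bigr)^2
\]
on the configuration space $(\H^2)^V$, where $d$ is hyperbolic distance and $\rho(e)\in\Gamma$ records the covering data of $e$. As $\H^2$ is $\mathrm{CAT}(0)$, each summand $(p,q)\mapsto d(p,\rho(e)q)^2$ is convex along geodesics of $(\H^2)^V$; hence $\Phi$ is convex, and this one fact drives all three parts.

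For existence in (i) I would run the direct method on $\Phi$. Along any geodesic ray in $(\H^2)^V$ the convex function $\Phi$ either tends to $+\infty$ or stays bounded; by the asymptotic geometry of $\H^2$ the latter forces every edge to escape to a common boundary point, which can happen only in the direction of a one-parameter isometry group commuting with $\rho$ (the flow along an axis fixed by $\rho(\pi_1 X)$, present only when the image is elementary). Translating a minimizing sequence back by this flow and using the connectedness of $X$ to propagate the resulting bound across all edges, I obtain a bounded minimizing sequence whose limit attains the minimum; in the disconnected case one argues componentwise. That a minimizer is harmonic I would read off the first variation formula (established in the Appendix): the interior-of-edge variation forces each $h_e$ to be a geodesic, while varying a vertex image $p_x$ and using $\nabla_{p}\tfrac12 d(p,q)^2=-\exp_p^{-1}(q)=-\tfrac{dh_e}{dt}(0)$ turns the vanishing of the first variation into $\sum_{e\in E_x}m_E(e)\tfrac{dh_e}{dt}(0)=0$, i.e.\ the balanced condition. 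Thus minimizers are exactly harmonic maps.

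Part (ii) is then immediate: harmonic maps are precisely the critical points of $\Phi$, and every critical point of a convex function on the Hadamard manifold $(\H^2)^V$ is a global minimum; so all harmonic maps realize the same minimal energy.

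For (iii) I would upgrade convexity to strict convexity under the hypothesis. Through the second variation formula, the Hessian of $\Phi$ is a sum over edges of the Hessians of the squared-distance functions, each positive semidefinite and degenerate only along an infinitesimal rigid sliding of the corresponding geodesic segment along a common geodesic line of $\H^2$ --- there being no flat strips in strictly negative curvature. The hard part will be to show that a simultaneous degeneration along all edges forces every vertex image onto a single geodesic line $L$ whose endpoints are fixed by all $\rho(e)$; concretely, I would track the Killing field of $\H^2$ generating the sliding and use the connectedness of $X$ to propagate one infinitesimal hyperbolic translation with axis $L$ to every vertex, whence equivariance gives $\rho(\pi_1 X)\subset\mathrm{Stab}(L)$, a cyclic (or trivial) group. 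This is exactly the case in which $f$ is homotopic to a point or to a (multiple of a) closed curve, excluded by hypothesis. Hence the Hessian is positive definite --- giving both the asserted non-degeneracy at the harmonic map and global strict convexity of $\Phi$ --- and strict convexity yields at most one critical point, which together with (i) produces a unique harmonic map. This flat-strip/Killing-field dichotomy, together with the elementary-image subtlety in the coercivity step of (i), is the only genuinely delicate point; elsewhere the argument is the standard $\mathrm{CAT}(0)$ calculus of variations.
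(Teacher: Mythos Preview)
Your approach is correct and takes a genuinely different route from the paper's. You lift to the universal cover and package everything as convexity of a finite-dimensional function $\Phi$ on the Hadamard manifold $(\H^2)^V$; the paper stays on the compact surface $S$ throughout. For (i) the paper's argument is considerably cheaper: it simply observes that $E_G$ restricted to the space $\Cc_\geo$ of piecewise-geodesic maps into $S$ is continuous and proper---immediate from compactness of $S$ and boundedness of edge lengths on sublevel sets---so a minimum exists without any analysis of coercivity-modulo-symmetry in a noncompact space. For (ii) and (iii) the two arguments are essentially the same computation in different language: the paper connects $h_0$ and $h_1$ by the geodesic homotopy $f_s(x):=$ the geodesic in $(S,G)$ from $h_0(x)$ to $h_1(x)$, applies the second variation formula from the appendix to get $\frac{d^2}{ds^2}E_G(f_s)=\sum_e m_E(e)\int_0^1(\|\nabla_T V\|^2+\|V^\perp\|^2\|T\|^2)\,dt\ge 0$, and then reads off that a null direction must be parallel and tangential along every edge, forcing $V\equiv 0$ once some vertex carries two linearly independent edge directions---exactly your flat-strip analysis. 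Your $\mathrm{CAT}(0)$ framing generalizes more transparently to other nonpositively curved targets; the paper's direct variational calculus is more self-contained here. One small point to watch: the non-degeneracy in (iii) is asserted for $\mathrm{Hess}_{E_G}$ on the full infinite-dimensional space of piecewise-smooth variations, not only on the finite-dimensional vertex-variation space underlying $\Phi$; the paper gets this directly from the displayed formula, and your argument extends once you note that any null vector of the full Hessian is automatically parallel-tangential and hence already lies in the Jacobi-field subspace, but as written your positive-definiteness of $\mathrm{Hess}_\Phi$ is formally a statement about the smaller space.
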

 
 \def\geo{{\rm geo}}
\proof
Let $\Cc_\geo$ be the set of piecewise geodesic maps in $\Cc$, i.e., maps restricted to each edge $e$ is a geodesic.
Note that $\Cc_\geo$ is non-empty because we find a map $f_\geo \in \Cc_\geo$ for any piecewise smooth map $f\in \Cc$ so that $f(x)=f_\geo(x)$ for any $x\in V$, and furthermore $E_G(f_\geo)\leq E_G(f)$.
If we endow the $C^1$-topology on $\Cc_\geo$, then the energy functional $E_G$ restricted to $\Cc_\geo$ is continuous and proper (i.e., for each $R \ge 0$, the set $\{f \in \Cc_\geo \ : \ E_G(f) \le R\}$ is compact) on $\Cc_\geo$. 
Thus, there exists a piecewise geodesic map $h$ which attains the minimum of $E_G$ in $\Cc$.

Note that a map $h:X \to (S, G)$ is a harmonic map if and only if for any smooth variations $f_s$ for $s\in (-\e, \e)$, $\e>0$ and $f_0=h$, we have $dE_G(f_s)/ds|_{s=0}=0$ by 
the first variation formula \eqref{Eq:firstVF} in Lemma \ref{Lem:first and second} since $h$ satisfies that
\[
\nabla_{\p_t h_{e}}\p_t h_{e}=0 \quad \text{for any $e \in E$} \quad \text{and} \quad \sum_{e \in E_x}m_E(e)\p_t h_{e}(0)=0 \quad \text{for any $x \in V$},
\]
where we write $\p_t h_{e}(t):=(d h_e/dt)(t)$ for $e \in E$ and $t \in [0, 1]$.
Hence if $h$ attains the minimum of $E_G$ in $\Cc$, then such $h$ is always a harmonic map.
This proves (i).

For any two harmonic maps $h_0$ and $h_1$ in $\Cc$,  
there exists a homotopy $f_s: X \to S$ so that $f_s(x)$ for $s \in [0, 1]$
is a (unique) geodesic from $h_0(x)$ to $h_1(x)$ in the hyperbolic surface $(S, G)$ for any $x\in X$ by perturbing the given homotopy if necessary.
We denote the variational vector field of $f_s$ by $V_e^s(t):=\frac{d}{ds} f_{s, e}(t)$ and $T_e^s:=\p_t f_{s, e}(t)$. 
Then, $\nabla_{V_e^s}V_e^s=0$ for the Levi-Civita connection $\nabla$, and hence, the second variation formula \eqref{Eq:secondVF} in Lemma \ref{Lem:first and second} shows that
\begin{align}\label{positive}
\frac{d^2}{ds^2}E_G(f_s)=\sum_{e\in E} m_E(e)\int_0^1\left\{\|\nabla_{T_e^s} V_e^s\|_G^2+\|(V_e^s)^\perp\|_G^2\|T_e^s\|_G^2\right\}\,dt\geq 0
\end{align}
for any $s\in [0,1]$ since $(S,G)$ has constant sectional curvature $-1$, where $(V_e^s)^\perp$ means the orthogonal projection of $V_e^s$ normal to $T_e^s$ if $T_s^s \neq 0$, and $0$ otherwise.
Since $h_0$ and $h_1$ are harmonic maps, we have $dE_G(f_s)/ds|_{s=0}=dE_G(f_s)/ds|_{s=1}=0$, and \eqref{positive} implies that $dE_G(f_s)/ds\equiv 0$ on $[0,1]$ and $E_G(h_0)=E_G(h_1)$.
This shows (ii).

Furthermore, in fact, we have $d^2E_G(f_s)/ds^2\equiv 0$ on $[0, 1]$, and by \eqref{positive}, $\nabla_{T_e^s}V_e^s=0$ and since $(S, G)$ has negative sectional curvature,
$V_e^s$ and $T_e^s$ are proportional for any $s \in [0, 1]$.
If the image of $f$ on each connected component $X^0$ of $X$ is not homotopic to a point nor a closed circle, then for any $s \in [0, 1]$ and in each connected component $X^0=(V^0, E^0)$, there exists a vertex $x \in V^0$ for which $\{T_e^s\}_{e \in E^0_x}$ contains at least two linearly independent vectors in $T_{f_s(x)}S$ since $f_s$ is homotopic to $f$; and thus $V_e^s\equiv 0$ for all $e \in E^0_x$.
In fact, $V_e^s \equiv 0$ for all $e \in E$ by parallel transports along the image of edges since $\nabla_{T_e^s}V_e^s=0$ and each $X^0$ is connected, and this holds for any $s \in [0, 1]$.
Therefore $h_0=h_1$.
Moreover, ${\rm Hess}_{E_G}(V, V)=(d^2/ds^2)|_{s=0}E_G(f_s)=0$ implies $V_e \equiv 0$ for all $e \in E$. 
We conclude the proof of (iii).
\qed


The following proposition guarantees that for a given closed hyperbolic surface $(S, G)$ and a harmonic map $h:X \to (S, G)$, 
harmonic maps $h_s$ depends smoothly on a smooth change of hyperbolic metrics $G_s$.
We give the proof in a more general setting in Proposition \ref{smooth dependence} in Appendix.

\begin{proposition}\label{Prop:smooth}
Let $X$ be a connected finite weighted graph, $(S, G)$ be a closed hyperbolic surface and $h: X \to (S, G)$ be a harmonic map.  Suppose that the image of $h$ is not a point nor a closed geodesic.
Then, for any $\e>0$ and for any smooth family of hyperbolic metrics $\{G_s\}_{s\in (-\epsilon,\epsilon)}$ with $G_0=G$ in $\Mc_{-1}(S)$, there exists a unique family of maps $h_s:X \to S$ for $s\in (-\epsilon,\epsilon)$ with $h_0=h$ such that $h_s:X \to (S, G_s)$ is a harmonic map for every $s \in (-\e, \e)$ and $h_s$ is smooth with respect to the variable $s$.
\end{proposition}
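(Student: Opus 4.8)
The plan is to recast the statement as a finite-dimensional implicit function theorem, the point being that a piecewise geodesic map $h:X\to (S,G_s)$ in the fixed homotopy class is completely determined by the tuple of vertex images $p=(h(x))_{x\in V}\in S^{V}$. Indeed, for $p$ in a small neighborhood $U$ of $p_0:=(h(x))_{x\in V}$ and for $s$ near $0$, each edge $e$ is sent to the unique $G_s$-geodesic joining $p_{e_0}$ to $p_{e_1}$ in the homotopy class prescribed by $h$; the existence, uniqueness and joint smoothness of these geodesics in $(p,s)$ follow from the negative curvature of $G_s$ together with the smooth dependence of the geodesic boundary-value problem on its endpoints and on the Christoffel symbols of $G_s$ (the homotopy class being locally constant, it is preserved by continuity). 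Write $h^{(p,s)}:X\to (S,G_s)$ for this piecewise geodesic map.

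First I would encode harmonicity as the vanishing of a balance map. For each vertex $x$ put
\[
\Phi_x(p,s):=\sum_{e\in E_x}m_E(e)\,\p_t h^{(p,s)}_e(0)\in T_{p_x}S,
\]
and assemble $\Phi(p,s):=(\Phi_x(p,s))_{x\in V}$ into a smooth section over $U$ of the bundle with fiber $\bigoplus_{x\in V}T_{p_x}S$. The balanced condition reads exactly $\Phi(p,s)=0$, so $h^{(p,s)}$ is $G_s$-harmonic if and only if $\Phi(p,s)=0$, and $\Phi(p_0,0)=0$ since $h$ is harmonic for $G=G_0$. Both $U$, an open subset of $\R^{2|V|}$ in local coordinates, and the fiber $\bigoplus_{x}T_{p_x}S$ have real dimension $2|V|$, so this is a square system.

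The crucial point is the identification of the linearization of $\Phi$ at the zero $(p_0,0)$. By the first variation formula \eqref{Eq:firstVF}, $\Phi(\cdot,s)$ is, up to sign, the gradient of the restriction of the energy $E_{G_s}$ to the finite-dimensional family of piecewise geodesic maps parametrized by $U$. Hence the vertical derivative $D_p\Phi(p_0,0)$---which at a zero is well defined independently of any connection---coincides with the Hessian ${\rm Hess}_{E_{G}}$ of $E_{G}$ at $h$. Since the image of $h$ is neither a point nor a closed geodesic, Theorem \ref{Thm:KSharmonic}(iii) asserts that this Hessian is non-degenerate, so $D_p\Phi(p_0,0)$ is a linear isomorphism.

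Then I would apply the implicit function theorem in local coordinates around each $p_x$ (say $G$-normal coordinates), which presents $\Phi$ as a smooth map between open subsets of $\R^{2|V|}$ whose partial derivative in $p$ is invertible at $(p_0,0)$. Shrinking $\epsilon$ if necessary, this produces a unique smooth curve $s\mapsto p(s)\in U$ with $p(0)=p_0$ and $\Phi(p(s),s)=0$, and the maps $h_s:=h^{(p(s),s)}$ are then $G_s$-harmonic and smooth in $s$. For the asserted uniqueness of the family, note that the condition that the image be neither a point nor a closed geodesic is open, hence persists for small $s$, and then Theorem \ref{Thm:KSharmonic}(iii) makes the harmonic map in the homotopy class unique for each $s$; the family furnished by the implicit function theorem is therefore the only one. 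The main obstacle is the first paragraph: verifying that the vertex-position parametrization is well defined and jointly smooth in $(p,s)$---equivalently, that the geodesic boundary-value problem has a unique solution depending smoothly on endpoints and on the metric---together with pinning down that the linearization of the balance map is exactly the energy Hessian; once these are settled, the non-degeneracy from Theorem \ref{Thm:KSharmonic}(iii) and the standard implicit function theorem close the argument.
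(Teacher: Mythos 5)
Your strategy is genuinely different from the paper's. The paper proves this proposition by invoking an infinite-dimensional result (Proposition \ref{smooth dependence} in the Appendix): an implicit function theorem on the Banach manifold $C^{k+2}_{bal}(X,M)$ of balanced maps, applied to the section $\tau(f,G)=\bigl(\nabla^G_{\p_t f_e}\p_t f_e\bigr)_{e}$, following Eells--Lemaire and Koiso, and then covers the curve $\{G_s\}$ by the resulting neighborhoods. Your finite-dimensional reduction --- parametrizing the piecewise geodesic maps in the class by their vertex positions $p\in S^{V}$ and applying the ordinary implicit function theorem to the balance map $\Phi$ --- is more elementary and is a legitimate alternative: a harmonic map is exactly a balanced piecewise constant-speed geodesic map, and in negative curvature the geodesic boundary-value problem in a fixed homotopy class is uniquely solvable (lift to the universal cover and use Cartan--Hadamard), with smooth dependence on endpoints and on the metric.

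However, your crucial step has a gap as written. $D_p\Phi(p_0,0)$ does \emph{not} coincide with the Hessian ${\rm Hess}_{E_G}$ of Theorem \ref{Thm:KSharmonic}(iii): that Hessian is a bilinear form on the infinite-dimensional space of all variation fields along $h$, whereas $D_p\Phi(p_0,0)$ acts on the $2|V|$-dimensional space of vertex variations. What is true (because $h$ is a critical point of $E_{G_0}$, so the second derivative of the energy along any family through $h$ depends only on the first-order variation field) is that $D_p\Phi(p_0,0)$ represents the \emph{pullback} $\xi\mapsto {\rm Hess}_{E_G}(V_\xi,V_\xi)$ of the full Hessian under the injective linear map sending vertex data $\xi$ to the piecewise Jacobi field $V_\xi$ along $h$ with those boundary values. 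Non-degeneracy of a symmetric bilinear form does not in general survive restriction to a subspace, so citing Theorem \ref{Thm:KSharmonic}(iii) alone does not close this step. It can be closed with the paper's own ingredients: by the second variation formula \eqref{Eq:secondVF}, ${\rm Hess}_{E_G}$ is positive semi-definite since the target has curvature $-1$, and the proof of Theorem \ref{Thm:KSharmonic}(iii) shows ${\rm Hess}_{E_G}(V,V)=0$ forces $V\equiv 0$; hence the full Hessian is positive \emph{definite}, and positive definiteness does pass to the pullback through the injective map $\xi\mapsto V_\xi$, making $D_p\Phi(p_0,0)$ invertible. This argument must be supplied.

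A second, smaller gap: the implicit function theorem centered at $s=0$ produces the family only on some small interval, and ``shrinking $\e$ if necessary'' is not available --- the proposition asserts existence and smoothness on the \emph{given} interval $(-\e,\e)$. The repair is the covering argument the paper uses: for each $s_0\in(-\e,\e)$ the metric $G_{s_0}$ is again hyperbolic, so Theorem \ref{Thm:KSharmonic} gives a unique harmonic map $h_{s_0}$ in the class $[h]$ with non-degenerate (indeed positive definite) Hessian; run your implicit function theorem centered at $(h_{s_0},s_0)$ to get smoothness of $s\mapsto h_s$ near $s_0$, and patch the local families together using the pointwise uniqueness of the harmonic map.
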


\proof
If the image of $h$ is not homotopic to a point nor a closed geodesic, then Theorem \ref{Thm:KSharmonic} (iii) implies that for each $s \in (-\e, \e)$ there exists a unique harmonic map $h_s: X \to (S, G_s)$ homotopic to $h$ and the Hessian of $E_G$ at $h_s$ is non-degenerate.
Hence by Proposition \ref{smooth dependence}, since $G_s \in \Mc_{-1}(S)$ for all $s \in (-\e, \e)$, for each $G_s$ and for the harmonic map $h_s: X \to (S, G_s)$, there exists an open set $\Uc$ around $G_s$ in $\Mc_{-1}(G)$ and a smooth map $\wt h: \Uc \to C^\infty(X, S)$ (the space of piecewise smooth map from $X$ into $S$) such that $\wt h(G'):X \to (S, G')$ is a harmonic map for all $G' \in \Uc$ and $\wt h(G_s)=h_s$.
Since for each $s \in (-\e, \e)$, the harmonic map $h_s:X \to (S, G_s)$ is unique in the homotopy class, covering the curve $\{G_s\}_{s \in (-\e, \e)}$ by such open sets, we obtain the claim.
\qed

\subsection{The energy functional on the Teichm\"uller space}\label{Sec:energy}
Recall that the Teichm\"uller space of $S$ consists of equivalence classes of pairs $((\SS, G_{\SS}), \f)$ of a hyperbolic surface $(\SS, G_{\SS})$ and a diffeomorphism (a marking) $\f:S \to \SS$,
where
$((\SS_1, G_1), \f_1) \sim ((\SS_2, G_2), \f_2)$ if and only if $\f_2 \circ \f_1^{-1}$ is homotopic to an isometry from $(\SS_1, G_1)$ to $(\SS_2, G_2)$.
We discuss the standard topology in $\Tc(S)$ (see e.g.,  \cite[Section 10.3, p.269]{FarbMargalit}).
The mapping class group $\Mod(S)$ is the group of isotopy classes of orientation-preserving diffeomorphisms on $S$.
The group $\Mod(S)$ acts on the Teichm\"uller space by the change of markings
\[
[\phi]\cdot [(\SS, G_{\SS}), \f]:=[(\SS, G_{\SS}), \f \circ \phi^{-1}] \qquad \text{for $[\phi] \in \Mod(S)$},
\] 
and the action is properly discontinuous \cite[Theorem 12.2, p.350]{FarbMargalit}.
The moduli space $\Mc(S)$ of $S$ is the quotient space of $\Tc(S)$ by the action of the mapping class group $\Mod(S)$.

The energy $E_G(f)$ attains the minimum at a harmonic map $h_G$ in the homotopy class $\Cc$ of $f$.
The function $G \mapsto E_G(h_G)$ on the space of hyperbolic metrics $\Mc_{-1}(S)$ naturally defines a function on the Teichm\"uller space $\Tc(S)$ of $S$.
For a point $[(\SS, G_{\SS}), \f]$ of the Teichm\"uller space $\Tc(S)$,
we write $\f^\ast G_\SS$ the pull-back metric of $G_\SS$ on $S$ by $\f:S \to (\SS, G_\SS)$.
If $h_1: X \to (S, \f_1^\ast G_1)$ is harmonic and $((\SS_1, G_1), \f_1) \sim ((\SS_2, G_2), \f_2)$, 
then the identity map on $S$ is homotopic to an isometry $\i: (S, \f_1^\ast G_1) \to (S, \f_2^\ast G_2)$,
and $\i \circ h_1:X \to (S, \f_2^\ast G_2)$ is also a harmonic map homotopic to $h_1$; hence
for any harmonic map $h_2: X \to (S, \f_2^\ast G_2)$ homotopic to $h_1$, one has
\[
E_{\f_1^\ast G_1}(h_1)=E_{\f_2^\ast G_2}(h_2)=\min_{h \in \Cc}E_G(h),
\]
by Theorem \ref{Thm:KSharmonic} (i) and (ii).
We define
\[
\Ec_\Cc([G]):=E_{G}(h_G),
\]
where $G=\f^\ast G_{\SS}$ is the hyperbolic metric induced on $S$ by a pair $((\SS, G_{\SS}), \f)$ representing a point in $\Tc(S)$ and $h_G$ is a harmonic map from $X$ to $(S, G)$ in the homotopy class $\Cc=[f]$.
Note that $\Ec_\Cc$ is a function on the Teichm\"uller space
\[
\Ec_\Cc:\Tc(S) \to [0, \infty), \quad \text{and} \quad \Ec_\Cc([G])=E_{G}(h_G)=\min_{h \in \Cc}E_G(h).
\]
We show that the function $\Ec_\Cc$ on $\Tc(S)$ is strictly convex with respect to the Weil-Petersson metric;
we will give the proof in Section \ref{Sec:convex}.

\begin{theorem}\label{Thm:convex}
Let $X=(V, E, m_E)$ be a connected finite weighted graph and $S$ be a closed Riemann surface of genus greater than one.
For any homotopy class $\Cc$ of piecewise smooth maps $f: X \to S$ such that the image of $f$ is not homotopic to a point nor a closed geodesic,
the function $\Ec_\Cc([G])=\min_{f \in \Cc}E_G(f)$ on $\Tc(S)$ is strictly convex with respect to the Weil-Petersson metric.
\end{theorem}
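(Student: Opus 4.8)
The plan is to reduce strict convexity to positivity of the second derivative of $\Ec_\Cc$ along nonconstant Weil--Petersson geodesics. Let $s\mapsto[G_s]$ be such a geodesic in $\Tc(S)$, parametrized by a smooth family of hyperbolic metrics $G_s$ on $S$ obtained by pulling back along the markings, with transverse-traceless velocity $\partial_s G_s$, and set $\Phi(s):=\Ec_\Cc([G_s])=E_{G_s}(h_s)$, where $h_s:X\to(S,G_s)$ is the unique harmonic map in $\Cc$. Proposition \ref{Prop:smooth} guarantees that $s\mapsto h_s$ is smooth, since the standing hypothesis on $\Cc$ (image neither a point nor a closed geodesic) is exactly what Theorem \ref{Thm:KSharmonic} (iii) requires. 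It then suffices to show $\Phi''(0)>0$ whenever $\partial_s G_s|_{0}\neq 0$. Writing each $h_{s,e}$ as a constant-speed geodesic, we have $\Phi(s)=\tfrac12\sum_{e\in E}m_E(e)\,\ell_{G_s}(h_{s,e})^2$, exhibiting $\Phi$ as a minimum of a weighted sum of squared arc-lengths and linking the problem to Wolpert's convexity of length along Weil--Petersson geodesics \cite{WolpertNielsen}.

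First I would carry out the envelope computation. Because $h_s$ is a critical point of $E_{G_s}$ in the map variable, the first variation formula \eqref{Eq:firstVF} annihilates the contribution of $\partial_s h_s$, so that $\Phi'(s)$ equals the derivative of $G\mapsto E_G(h_s)$ in the direction $\partial_s G_s$ with the map frozen. Differentiating once more and substituting the $s$-derivative of the harmonicity equation $D_hE_{G_s}(h_s)=0$, the positive-definiteness at the minimum of ${\rm Hess}_{E_{G_s}}$ (Theorem \ref{Thm:KSharmonic} (iii)) lets me solve for the cross-term and obtain
\[
\Phi''(s)=\delta_G^2E_{G_s}(h_s)-{\rm Hess}_{E_{G_s}}\!\big(\partial_s h_s,\partial_s h_s\big),
\]
where $\delta_G^2E_{G_s}(h_s)=\tfrac12\sum_e m_E(e)\int_0^1(\partial_s^2 G_s)(\partial_t h_{s,e},\partial_t h_{s,e})\,dt$ is the second variation of the metric with the map fixed, computed via \eqref{Eq:secondVF}. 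Here the Weil--Petersson geodesic equation (in the transverse-traceless gauge) determines $\partial_s^2 G_s$ in terms of $\partial_s G_s$ and the Weil--Petersson connection. Equivalently, $\Phi''(0)$ is the Schur complement of the map-block in the full Hessian of $(h,G)\mapsto E_G(h)$ taken along the product of geodesics in the map space and Weil--Petersson geodesics; that map-block is positive definite by Theorem \ref{Thm:KSharmonic} (iii) (reflecting convexity of the squared distance in the negatively curved target). This bookkeeping yields the explicit Hessian formula of Theorem \ref{Thm:Hessian}.

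The \emph{main obstacle} is the positivity. Since ${\rm Hess}_{E_{G_s}}\succeq 0$, the subtracted term is nonpositive, so I must show that the metric second variation $\delta_G^2E_{G_s}(h_s)$ strictly dominates it when $\partial_s G_s\neq 0$. This is where the geometry of the target enters decisively: the curvature $-1$ of $(S,G_s)$ produces a favorable sign, exactly as in the nonnegative integrand of \eqref{positive}, while the Weil--Petersson geodesic equation controls $\partial_s^2 G_s$ so that its pairing against the energy density of $h_s$ is bounded below. I expect to reorganize $\Phi''(s)$ into a sum of manifestly nonnegative contributions --- a squared-gradient part coming from $\partial_s h_s$ against the target connection and a curvature part --- and then to prove the rigidity that simultaneous vanishing forces $\partial_s G_s\equiv 0$ along the geodesic. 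This last step uses that a nonzero transverse-traceless $\partial_s G_s$ has only isolated zeros, so it is detected by the energy density provided the image of $h_s$ is neither a point nor confined to a single geodesic, which is precisely the hypothesis on $\Cc$; negative curvature is again essential. This rigidity is the delicate point and is the analogue for finite weighted graphs of the equality analysis in Yamada \cite{Yamada}.
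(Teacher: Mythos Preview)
Your overall architecture matches the paper's: the envelope computation leading to $\Phi''(0)=\delta_G^2E_{G_0}(h_0)-\tfrac{d^2}{ds^2}E_{G_0}(h_s)\big|_{s=0}$ is exactly formula \eqref{esvf2}, and your rigidity step (a nonzero transverse--traceless $Q$ has isolated zeros, so vanishing of the integrand along a nondegenerate geodesic arc forces $Q\equiv 0$) is precisely how the paper closes, via Lemma \ref{Lem:Qholomorphic} (i).

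The gap is that you do not identify the two analytic inputs that turn ``I expect to reorganize $\Phi''$ into nonnegative terms'' from an aspiration into a proof. First, ``the Weil--Petersson geodesic equation controls $\partial_s^2 G_s$'' must be made concrete: the paper invokes Yamada's formula (Proposition \ref{key1}), $\partial_s^2 G_s|_{s=0}=\big(\tfrac14-\tfrac12(\Delta_{G_0}-2)^{-1}\big)\|Q\|^2\,G_0+L_ZG_0$, and it is the specific constant $\tfrac14$ here that \emph{cancels} against the $\tfrac14\|Q\|^2$ term arising in the map--variation piece (Proposition \ref{key2}). Without seeing this cancellation, your framing ``$\delta_G^2E$ must strictly dominate $\mathrm{Hess}_{E_{G_0}}(\partial_sh,\partial_sh)$'' is misleading: the dominant $\tfrac14\|Q\|^2\|T_e\|^2$ contributions on both sides are equal, and what survives is $-\tfrac14(\Delta_{G_0}-2)^{-1}\|Q\|^2+\|V_e^\perp\|^2+\|\mathcal V_e\|^2$. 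Second, the remaining nonlocal term $-(\Delta_{G_0}-2)^{-1}\|Q\|^2$ is not obviously nonnegative; its positivity requires the Bochner--type inequality $(\Delta_{G_0}+4)\|Q\|_{G_0}^2\ge 0$ of Lemma \ref{Lem:Qholomorphic} (ii), which the paper converts via a maximum--principle argument into \eqref{Lem: Wolf}. Neither of these facts is a bookkeeping detail; they are the analytic core, and your sketch should name them.
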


Recall that a continuous map $f:X \to S$ fills $S$ if there exists an injective map $f_1:X \to S$ homotopic to $f$ such that the complement of the image $f_1$ is a disjoint union of disks.
\begin{lemma}\label{Lem:fill}
If $f:X \to S$ fills $S$, then the induced homomorphism $f_\ast: \pi_1(X, x_0) \to \pi_1(S, f(x_0))$ is surjective.
\end{lemma}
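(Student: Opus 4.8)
The plan is to reduce to the case where $f$ itself is an embedding onto the $1$-skeleton of a cell decomposition of $S$, and then invoke the elementary fact that the inclusion of the $1$-skeleton into a connected CW complex is surjective on fundamental groups. First I would use the definition of filling to obtain an injective map $f_1 : X \to S$ homotopic to $f$ whose complement $S \setminus f_1(X)$ is a disjoint union of open disks. Since any (free) homotopy between $f$ and $f_1$ identifies the induced maps $f_\ast$ and $(f_1)_\ast$ up to the change-of-basepoint isomorphism of $\pi_1(S)$ along the track of the homotopy, and such an isomorphism does not affect surjectivity, it suffices to prove that $(f_1)_\ast$ is surjective.

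Next I would upgrade $f_1$ from a mere injection to a homeomorphism onto its image. As $X$ is a finite graph it is compact, and $S$ is Hausdorff, so the continuous injection $f_1$ is a homeomorphism onto $Y := f_1(X)$. In particular $(f_1)_\ast : \pi_1(X, x_0) \to \pi_1(Y, f_1(x_0))$ is an isomorphism, and it remains only to show that the inclusion-induced homomorphism $\iota_\ast : \pi_1(Y) \to \pi_1(S)$ is surjective.

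The core step is to exhibit $Y$ as the $1$-skeleton of a CW structure on $S$. Writing $S \setminus Y = \bigsqcup_{i} D_i$ with each $D_i$ an open disk, I would take the vertices $V$ as $0$-cells, the (unoriented) edges of $X$ as $1$-cells, and each closed complementary region $\overline{D_i}$ as a $2$-cell whose characteristic map restricts to a homeomorphism on the open disk and sends the bounding circle into $Y$. Granting this, $S$ is obtained from $Y$ by attaching $2$-cells, so by van Kampen's theorem $\pi_1(S)$ is the quotient of $\pi_1(Y)$ by the normal closure of the boundary loops $\partial D_i$; equivalently, cellular approximation lets one homotope any loop in $S$ into the $1$-skeleton $Y$ by pushing each subarc lying in some disk $D_i$ onto $\partial D_i \subset Y$. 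Either way $\iota_\ast$ is surjective, and composing with the isomorphism $(f_1)_\ast$ finishes the argument.

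I expect the only genuine subtlety — and hence the main obstacle to a fully rigorous write-up — to be the verification that the embedded graph $Y$ together with its complementary regions really furnishes a CW (equivalently, attaching-cell) structure on $S$: one must check that the closure of each open disk $D_i$ is the continuous image of a closed $2$-disk under a map carrying the boundary circle into $Y$, rather than merely that $D_i$ is abstractly homeomorphic to an open disk. This is classical for graphs filling surfaces, and it is precisely the point where the topological hypothesis that the complement is a union of disks is used; everything else is formal manipulation of $\pi_1$.
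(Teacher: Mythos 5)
Your proposal is correct, and its skeleton is the same as the paper's: pass to the injective representative $f_1$, observe that surjectivity is unaffected by the change-of-basepoint isomorphism, and use the disk structure of the complement to homotope an arbitrary loop into the embedded graph $Y=f_1(X)$, which injectivity (indeed, the fact that $f_1$ is a homeomorphism onto its image, since $X$ is compact and $S$ Hausdorff) then pulls back to a loop in $X$. Where you differ is in how loops get pushed into $Y$: you promote $Y$ to the $1$-skeleton of a CW structure on $S$ and invoke cellular approximation or van Kampen, whereas the paper works by hand --- it fixes an interior point $p_i$ in each complementary disk $D_i$, builds a homotopy of $S$, supported in the disks and equal to the identity on $Y$, that pushes everything outside small neighborhoods of the $p_i$ onto the frontiers, and then perturbs a given loop so as to miss those neighborhoods before applying the homotopy. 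Your route buys standard machinery at the price of the strongest structural claim (genuine characteristic maps for the $2$-cells); the paper's route nominally needs less, but its pushing homotopy must extend continuously by the identity across the frontier of each $D_i$, which is exactly the same regularity of the attachment of $\overline{D_i}$ to $Y$ that you flag as the crux. So the subtlety you identify is real, but it is present --- and treated just as informally --- in the paper's own proof; your write-up is neither weaker nor stronger at that point, merely more explicit about the dependence on this classical fact about graphs filling surfaces.
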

\proof
There exists an injective map $f_1$ homotopic to $f$ such that the complement of the image $f_1$ is a disjoint union of disks.
Then, for each disk, choosing a point from the interior, we take a homotopy on the disk such that it pushes outside of a small neighborhood of the point to the boundary and remains identity on the boundary. 
Patching these homotopies together, we obtain a homotopy on the surface.

For any point $x_0$ in $X$, and any loop $\g$ based at $f_1(x_0)$ in $S$, up to a small perturbation of $\g$ if necessary, composing the homotopy constructed, we obtain a loop confined in the boundaries of disks.
Since $f_1$ is injective, there is a loop $c$ in the graph $X$ based at $x_0$, whose image by $f_1$ is homotopic to the original loop $\g$ relative to $f_1(x_0)$ on the surface. This shows that the loop $c$ satisfies $f_{1 \ast} [c]=[\g]$. 
Thus, $f_{1 \ast}: \pi_1(X, x_0) \to \pi_1(S, f_1(x_0))$ is surjective, and since $f_1$ is homotopic to $f$, we conclude the claim.
\qed

If we fix the homotopy class $\Cc$ of $f$ such that $f$ induces a surjective homomorphism from $\pi_1(X)$ to $\pi_1(S)$, 
then the energy functional $\Ec_\Cc$ is proper; we shall give the proof in Section \ref{Sec:proper}.

\begin{theorem}\label{Thm:proper}
Let $X=(V, E, m_E)$ be a finite weighted graph, and $S$ be a closed Riemann surface of genus greater than one.
Let $\Cc$ be the homotopy class of a continuous map $f:X \to S$ such that the induced homomorphism $f_\ast:\pi_1(X, x_0) \to \pi_1(S, f(x_0))$ is surjective.
Then, the function $\Ec_\Cc:\Tc(S) \to [0, \infty)$ is proper, i.e., for any $R \ge 0$, the sublevel set of $\Ec_\Cc$,
\[
\Big{\{}[(\SS, G_\SS), \f] \in \Tc(S) \ : \ \Ec_\Cc([G]) \le R, \ G=\f^\ast G_\SS \Big{\}}
\]
is compact relative to the standard topology in $\Tc(S)$.
\end{theorem}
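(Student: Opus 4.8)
The plan is to dominate $\Ec_\Cc$ from below by the total length functional of a filling family of closed geodesics --- a functional of the type studied by Wolpert \cite{WolpertNielsen}, which is proper on $\Tc(S)$ --- by bounding geodesic lengths from above in terms of the energy. For a free homotopy class $\g$ of closed curves on $S$ and a hyperbolic metric $G$ on $S$, write $\ell_\g(G)$ for the length of the closed geodesic freely homotopic to $\g$. First I would establish an energy--length inequality. Fix the base vertex $x_0 \in V$, and let $c$ be a loop in $X$ based at $x_0$ traversing a finite edge sequence $e_1, \dots, e_r$. The restriction of the harmonic map $h_G$ to $c$ is a piecewise geodesic loop in $(S, G)$ freely homotopic to $f \circ c$, and since the closed geodesic is the shortest curve in its free homotopy class, the Cauchy--Schwarz inequality applied on each edge together with the definition \eqref{Eq:Dirichlet} of the energy gives
\[
\ell_{[f \circ c]}(G) \le \sum_{i=1}^r \int_0^1 \Big\| \frac{d (h_G)_{e_i}}{dt}(t) \Big\|_G \, dt \le C_c \, \big( E_G(h_G) \big)^{1/2} = C_c \, \Ec_\Cc([G])^{1/2},
\]
where $C_c>0$ depends only on the combinatorial data of $c$ and on $X$ (in particular on $r$ and $\min_{e \in E} m_E(e)$), but not on $G$.

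Next I would use the surjectivity of $f_\ast$ to feed a filling family into this estimate. I fix once and for all a finite family of closed curves $\g_1, \dots, \g_N$ which fills $S$; such a family exists on any closed surface of genus at least two. Because the homomorphism $f_\ast: \pi_1(X, x_0) \to \pi_1(S, f(x_0))$ is surjective, every free homotopy class is represented by $f \circ c$ for some loop $c$ in $X$ based at $x_0$; in particular each $\g_j$ is freely homotopic to $f \circ c_j$ for a suitable loop $c_j$. Applying the previous inequality to each $c_j$ and summing yields
\[
\sum_{j=1}^N \ell_{\g_j}(G) \le C \, \Ec_\Cc([G])^{1/2}, \qquad C := \sum_{j=1}^N C_{c_j},
\]
for every $G = \f^\ast G_\SS$ representing a point of $\Tc(S)$. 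Thus, for each $R \ge 0$, the sublevel set $\{\Ec_\Cc \le R\}$ is contained in $\{[G] \in \Tc(S) : \sum_{j} \ell_{\g_j}(G) \le C\sqrt{R}\}$.

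Finally I would invoke the properness on $\Tc(S)$ of the total length functional of a filling family (the functional recalled in the introduction, see \cite{WolpertNielsen} and \cite{KerckhoffNielsen}): since $\{\g_j\}$ fills $S$, the function $[G] \mapsto \sum_{j} \ell_{\g_j}(G)$ is proper, so each of its sublevel sets is compact. As $f_\ast$ is surjective, the image of $f$ is homotopic neither to a point nor to a closed geodesic, so $\Ec_\Cc$ is continuous by Proposition \ref{Prop:smooth} and Theorem \ref{Thm:convex}; hence its sublevel set $\{\Ec_\Cc \le R\}$ is closed. Being a closed subset of a compact set, it is compact, which is the assertion of the theorem.

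I expect the crux to lie in the last step, the properness of the filling-length functional across \emph{all} directions of escape to infinity in $\Tc(S)$, and not merely the pinching directions detected by Mumford's compactness criterion in the moduli space. Indeed, a generating family of geodesic lengths being bounded does not force compactness, since a generating family of curves need not fill; the role of the surjectivity of $f_\ast$ is precisely to let us realize an \emph{a priori} chosen filling family --- rather than merely a generating one --- as images of loops in $X$. Should one wish to avoid citing the filling-length properness, the obstacle becomes the standard but slightly delicate combination of the collar lemma (for sequences whose systole tends to $0$) with the Dehn--Nielsen--Baer identification of $\Mod(S)$ (for sequences escaping through unbounded mapping classes), showing in either case that $\ell_{\g_j}(G_n) \to \infty$ for some $j$.
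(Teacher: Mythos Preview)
Your argument is correct, and its first half coincides with the paper's: both bound the lengths of closed geodesics in a fixed filling family $\{\g_j\}$ from above by $C\,\Ec_\Cc([G])^{1/2}$ via Cauchy--Schwarz and the surjectivity of $f_\ast$. You then finish in one stroke by citing Wolpert's properness of the total filling-length functional on $\Tc(S)$, together with continuity of $\Ec_\Cc$. The paper instead works more from first principles: it feeds the length bound into the Collar Lemma to obtain a uniform lower bound on the injectivity radius, applies Mumford--Mahler compactness to land in a compact subset of the moduli space, and then---this is precisely the work your citation absorbs---lifts back to $\Tc(S)$ by showing (via Lemma~\ref{Lem:homotopic}, which uses the surjectivity of $f_\ast$ a second time) that along a subsequence convergent in moduli the correcting mapping classes eventually stabilize. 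Your route is shorter; the paper's is self-contained and yields as a byproduct an explicit estimate $\inj(S,G) \ge \e(R)$ on each sublevel set. Your closing paragraph in fact sketches the paper's alternative strategy almost exactly.
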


In general, we are not able to remove the condition that $f$ induces a surjective homomorphism from $\pi_1(X)$ to $\pi_1(S)$ in Theorem \ref{Thm:proper}.
For example, if we have a simple closed curve in $S$ as an image of $f$, then
one is able to make the length of closed geodesic homotopic to $f$ arbitrary small by changing hyperbolic metrics in $S$.

\begin{theorem}\label{Thm:general_main}
Let $X=(V, E, m_E)$ be a finite weighted graph, and $S$ be a closed Riemann surface of genus greater than one.
If $\Cc=[f]$ is the homotopy class of $f$ which induces a surjective homomorphism from $\pi_1(X)$ to $\pi_1(S)$,
then the energy functional $\Ec_\Cc$ has a unique minimum point in the Teichm\"uller space $\Tc(S)$.
\end{theorem}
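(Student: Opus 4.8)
The plan is to deduce Theorem~\ref{Thm:general_main} directly from the convexity (Theorem~\ref{Thm:convex}) and properness (Theorem~\ref{Thm:proper}) of the energy functional $\Ec_\Cc$ on the Teichm\"uller space $\Tc(S)$, and then to transfer the resulting minimizer in $\Tc(S)$ back to a genuine minimizing pair for the original double minimization problem. The hypothesis that $f_\ast$ is surjective is exactly what Theorem~\ref{Thm:proper} requires, and it also guarantees (since $\pi_1(S)$ is nonabelian for genus $>1$) that the image of $f$ is not homotopic to a point nor to a single closed geodesic, so that Theorem~\ref{Thm:convex} applies and the harmonic map $h_G$ is unique for each $G$.

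\emph{Existence and uniqueness of the minimizer on $\Tc(S)$.} First I would fix $\Cc=[f]$ and consider the function $\Ec_\Cc:\Tc(S)\to[0,\infty)$. By Theorem~\ref{Thm:proper}, $\Ec_\Cc$ is proper, so every sublevel set is compact; since $\Ec_\Cc$ is continuous (this follows from the smooth dependence of harmonic maps on the metric, Proposition~\ref{Prop:smooth}, together with the well-definedness of $\Ec_\Cc$ on $\Tc(S)$ established in Section~\ref{Sec:energy}) and $\Tc(S)$ is nonempty, a standard compactness argument yields at least one minimum point. For uniqueness I would invoke Theorem~\ref{Thm:convex}: $\Ec_\Cc$ is strictly convex with respect to the Weil--Petersson metric. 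Since the Weil--Petersson metric is a complete-on-geodesics Riemannian metric whose geodesics are unique (the relevant fact being that between any two points of $\Tc(S)$ there is a Weil--Petersson geodesic), strict convexity along every geodesic forbids two distinct minima: if $p_0\neq p_1$ were both minimizers, then along the geodesic joining them $\Ec_\Cc$ would be strictly convex with equal endpoint values, forcing a strictly smaller interior value and contradicting minimality. Hence the minimum point $[(\SS_0,G_0),\f_0]$ is unique.

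\emph{From the Teichm\"uller minimizer to the minimizing pair.} Writing $G_0=\f_0^\ast G_{\SS_0}$ for the induced hyperbolic metric on $S$ and letting $h_0:X\to(S,G_0)$ be the unique harmonic map in $\Cc$, I would note that by construction
\[
\Ec_\Cc([G_0])=E_{G_0}(h_0)=\min_{h\in\Cc}E_{G_0}(h)\le E_{G_0}(f)\quad\text{for all }f\in\Cc,
\]
and that for every point of $\Tc(S)$ the value $\Ec_\Cc$ equals $\min_{h\in\Cc}E_G(h)$. Since every hyperbolic metric $G$ on $S$ represents some point of $\Tc(S)$ (via the marking given by the identity), minimizing over $\Tc(S)$ is the same as minimizing $\min_{h\in\Cc}E_G(h)$ over $\Mc_{-1}(S)$ up to the equivalence identifying metrics related by an isometry homotopic to the identity. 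Therefore $(h_0,G_0)$ attains $\min_{(f,G)}\Ec(f,G)$, and the uniqueness of the Teichm\"uller minimizer translates into uniqueness of $G_0$ up to isometries homotopic to the identity.

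\emph{The main obstacle.} The genuinely nontrivial inputs are Theorems~\ref{Thm:convex} and~\ref{Thm:proper} themselves, which are proved later; granting those, the remaining difficulty is purely the convexity-plus-properness deduction, and there the subtle point is ensuring that strict convexity with respect to the Weil--Petersson metric really does yield a \emph{unique} global minimum. This requires knowing that $\Tc(S)$ is geodesically connected in the Weil--Petersson metric (so that strict convexity along geodesics can be applied between any two candidate minimizers) and that properness supplies existence despite the Weil--Petersson metric being incomplete. I expect the careful handling of this interplay between incompleteness of the metric and properness of the functional to be the most delicate step; the passage back to the original functional $\Ec$ on $\Cc\times\Mc_{-1}(S)$ is then essentially bookkeeping once the well-definedness established in Section~\ref{Sec:energy} is in hand.
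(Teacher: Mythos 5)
Your proposal is correct and takes essentially the same route as the paper: properness (Theorem \ref{Thm:proper}) yields existence of a minimizer, and strict convexity with respect to the Weil-Petersson metric (Theorem \ref{Thm:convex}) combined with the fact that any two points of $\Tc(S)$ are joined by a Weil-Petersson geodesic (a theorem of Wolpert, which is exactly the "relevant fact" you flagged) yields uniqueness. Your final paragraph, transferring the Teichm\"uller minimizer back to a minimizing pair for $\Ec$ on $\Cc \times \Mc_{-1}(S)$, is not needed for this statement --- it is the content of the paper's proof of Theorem \ref{Thm:main}, which the paper carries out separately in the same way.
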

\proof
Theorem \ref{Thm:proper} implies that there exists a minimum of $\Ec_\Cc$ on $\Tc(S)$ if $f$ induces a surjective homomorphism from $\pi_1(X)$ to $\pi_1(S)$. 
Since any two points in $\Tc(S)$ can be connected by a Weil-Peterson geodesic by a theorem by Wolpert \cite[Corollary 5.6]{WolpertNielsen},
Theorem \ref{Thm:convex} shows that a minimum is unique.
\qed

\proof[Proof of Theorem \ref{Thm:main}]
Lemma \ref{Lem:fill} and Theorem \ref{Thm:general_main} implies that there exists a unique point $[(\SS, G_{\SS}), \f]$ in the Teichm\"uller space $\Tc(S)$ such that $\Ec_\Cc$ attains the minimum.
Since $\Ec_\Cc([G])=E_{G}(h_G)=\min_{h \in \Cc}E_G(h)$ for a harmonic map $h_G:X \to (S, G)$ in the homotopy class $\Cc$ and $G=\f^\ast G_{\SS}$,
the pair $(h_G, G)$ of a harmonic map $h_G$ in $\Cc$ and the hyperbolic metric $G$ on $S$ attains the minimum $E_G(h_G)$.
Note that the hyperbolic metric $G \in \Mc_{-1}(S)$ is unique up to isometries homotopic to the identity on $S$, we conclude the claim.
\qed

\section{Convexity of the energy functional}\label{Sec:convex}

In this section, we compute the second derivative of the energy functional $\Ec_\Cc$ for $\Cc=[f]$. 
The argument is inspired by Wolf, who showed the convexity of the {\it length} functional of a {\it closed} curve along the Weil-Petersson geodesic  \cite{Wolf}.
We discuss the energy of curves and extend it to any finite weighted graphs.
First, we recall some differential geometric aspects and facts that we use on the Teichm\"uller space, following \cite{Yamada}, \cite{Yamada2} and \cite{Wolf}.

\subsection{Hessian formula of the energy functional}\label{ss1}

Let ${\rm Met}(S)$ be the set of all smooth metrics on $S$.
Then, $\mathcal{M}_{-1}(S)$ is the subset of ${\rm Met}(S)$ consisting of metrics of constant sectional curvature $K=-1$. 
The tangent spaces of ${\rm Met}(S)$ 
is regarded as
\begin{align*}
T_G{\rm Met}(S)=\Big{\{}\textup{symmetric}\ (0,2)\textup{-tensors on}\ S\Big{\}}.
\end{align*}
The {Teichm\"uller space} $\mathcal{T}(S)$ of $S$ is identified with the quotient space of $\mathcal{M}_{-1}(S)$ by ${\rm Diff}_0(S)$ via the map
\[
\mathcal{T}(S) \to \mathcal{M}_{-1}(S)/{\rm Diff}_0(S), \qquad [(\Sigma, G_\SS), \varphi]\mapsto [\varphi^\ast G_\SS], 
\]
where ${\rm Diff}_0(S)$ is the identity component of the group of diffeomorphisms on $S$.

The deformations arising from ${\rm Diff}_0(S)$ are given by the set of Lie derivatives $L_XG$ of $G$ for smooth vector fields 
$X$ on $S$. 
The tangent space to $\mathcal{T}(S)$ at $[G]$ is identified with
\begin{align*}
T_{[G]}\mathcal{T}(S)&=\{Q\in T_G{\rm Met}(S);\ {\rm tr}_GQ=0\ {\rm and}\ \delta_GQ=0\},
\end{align*}
where $\delta_GQ:=-{\rm tr}_{12}(\nabla Q)=-G^{ij}\nabla{Q}(\p_i,\p_j, \cdot)$ for the Levi-Civita connection $\nabla$ of $(S, G)$ \cite[Section 3.2]{Yamada2}, where for a local coordinate $(x^1,x^2)$ on $S$, we abbreviate $\p/\p x^1$ and $\p/\p x^2$ by $\p_1$ and $\p_2$, respectively.  
We denote the matrix representation of any symmetric $(0,2)$-tensor $Q$ by $Q=(Q_{ij})_{i,j=1,2}=(Q(\p_i,\p_j))_{i, j=1, 2}$.

 \begin{lemma}\label{Lem:Qholomorphic}
 Let $(S,G)$ be a hyperbolic surface of constant sectional curvature $K=-1$ and $Q$ be a symmetric $(0,2)$-tensor on $S$ such that ${\rm tr}_G Q=0$ and $\delta_GQ=0$. 
 Then, we have the following:
 \begin{itemize}
 \item[(i)] For any isothermal chart $(U, z=x^1+\sqrt{-1}x^2)$ of $(S,G)$, the function $Q_{11}-\sqrt{-1}Q_{12}$ is holomorphic on $U$.  
 \item[(ii)] It holds that
\begin{align*}
(\Delta_G+4)\|Q\|^2_G\geq 0,
\end{align*}
where $\Delta_G$ is the Laplace-Beltrami operator acting on $C^{\infty}(S)$.
 \end{itemize}
 \end{lemma}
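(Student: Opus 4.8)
The plan is to verify both statements pointwise inside a fixed isothermal chart $(U,z=x^1+\sqrt{-1}x^2)$, where the hyperbolic metric is conformal, $G=\lambda^2(dx^1\otimes dx^1+dx^2\otimes dx^2)$ with $\lambda>0$ smooth, so that $G_{11}=G_{22}=\lambda^2$, $G_{12}=0$, and $G^{ij}=\lambda^{-2}\delta^{ij}$. Since holomorphicity is a local statement and the inequality in (ii) is to be checked at each point, this chartwise reduction loses nothing, and I will throughout take $\Delta_G=\lambda^{-2}(\partial_1^2+\partial_2^2)$ in the chart.

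For (i), I would first read off the hypotheses in coordinates. The condition $\mathrm{tr}_GQ=0$ gives $Q_{11}+Q_{22}=0$, hence $Q_{22}=-Q_{11}$, so $Q$ is determined by $Q_{11}$ and $Q_{12}$. For $\delta_GQ=0$ I would compute the Christoffel symbols of the conformal metric (the standard expressions in $\rho:=\log\lambda$) and expand $\delta_GQ=-G^{ij}\nabla_iQ_{jk}=0$ for $k=1,2$. The point is that all terms involving $\rho$ cancel once $Q_{11}+Q_{22}=0$ is used, leaving the two flat identities $\partial_1Q_{11}+\partial_2Q_{12}=0$ and $\partial_2Q_{11}-\partial_1Q_{12}=0$. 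These are precisely the Cauchy--Riemann equations for $\Phi:=Q_{11}-\sqrt{-1}Q_{12}$, since with $\partial_{\bar z}=\frac12(\partial_1+\sqrt{-1}\partial_2)$ one has $\partial_{\bar z}\Phi=\frac12\big[(\partial_1Q_{11}+\partial_2Q_{12})+\sqrt{-1}(\partial_2Q_{11}-\partial_1Q_{12})\big]=0$; hence $\Phi$ is holomorphic on $U$.

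For (ii), I would express the norm through $\Phi$: using $G^{ij}=\lambda^{-2}\delta^{ij}$ and $Q_{22}=-Q_{11}$ one finds $\|Q\|_G^2=\lambda^{-4}(Q_{11}^2+2Q_{12}^2+Q_{22}^2)=2\lambda^{-4}|\Phi|^2=:u$, a smooth nonnegative function. Fix a point $p$. If $\Phi(p)\neq0$, then $u>0$ near $p$ and I would argue logarithmically: holomorphicity of $\Phi$ makes $\log|\Phi|$ harmonic away from its zeros, so $\Delta_G\log|\Phi|=0$, while the Gauss equation for $K=-1$ reads $(\partial_1^2+\partial_2^2)\log\lambda=\lambda^2$, i.e.\ $\Delta_G\log\lambda=1$. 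Hence $\Delta_G\log u=2\Delta_G\log|\Phi|-4\Delta_G\log\lambda=-4$, and combined with the identity $\Delta_G\log u=\Delta_Gu/u-\|\nabla u\|_G^2/u^2$ this yields $\Delta_Gu+4u=\|\nabla u\|_G^2/u\geq0$ at $p$. If instead $\Phi(p)=0$, then $u(p)=0$ is a global minimum of the nonnegative function $u$, so $\nabla u(p)=0$ and $\Delta_Gu(p)=\mathrm{tr}_G\mathrm{Hess}_Gu(p)\geq0=-4u(p)$; thus $(\Delta_G+4)u(p)\geq0$ here as well. As $p$ is arbitrary, (ii) follows.

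The genuinely delicate point is the behavior at the zeros of $\Phi$, where the logarithmic identity $\Delta_G\log u=-4$ degenerates; I expect this to be the main obstacle. Rather than pass to a distributional reading of $\Delta_G\log u$ (which would only record nonnegative Dirac masses at the zeros), I would dispose of these points by the elementary minimum-principle observation above. The remainder---producing the Christoffel symbols and confirming that every $\rho$-term drops out in (i)---is routine but must be carried out carefully.
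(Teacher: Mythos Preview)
Your proof is correct and follows essentially the same route as the paper: both verify (i) by computing $\delta_GQ$ in an isothermal chart and observing that the conformal-factor terms drop out once $Q_{11}+Q_{22}=0$ is used, yielding the Cauchy--Riemann system for $Q_{11}-\sqrt{-1}Q_{12}$; and both prove (ii) by writing $\|Q\|_G^2$ as a conformal weight times $|\Phi|^2$, taking logarithms, and combining $\Delta_G\log|\Phi|=0$ with the curvature equation to get $\Delta_G\log\|Q\|_G^2=-4$, then converting this to the stated inequality via $\Delta_G\log F=\Delta_GF/F-|\nabla F|_G^2/F^2$. The only difference is at the zeros of $\Phi$: the paper notes they are isolated (by holomorphicity) and implicitly extends the pointwise inequality by continuity, while you handle them directly with the minimum-principle observation that $u\geq0$, $u(p)=0$ forces $\Delta_Gu(p)\geq0$; your treatment is slightly more self-contained but the two are interchangeable.
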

 \proof
Take an isothermal coordinate $(U, z=x^1+\sqrt{-1}x^2)$ so that the metric $G$ is expressed by $G(z)=\rho(z)|dz|^2=\rho(x^1, x^2)((dx^1)^2+(dx^2)^2)$ for a positive real function $\rho$ on $U$. Note that the Christoffel symbol $\Gamma_{ij}^k$  for the Levi-Civita connection on any isothermal coordinate satisfies
\begin{align*}
\begin{cases}
\Gamma_{11}^1=\Gamma_{12}^2=\Gamma_{21}^2=-\Gamma_{22}^1=\frac{1}{2}\p_1\log {\rho},\\
\Gamma_{22}^2=\Gamma_{21}^1=\Gamma_{12}^1=-\Gamma_{11}^2=\frac{1}{2}\p_2\log{\rho},\\
\end{cases}
\end{align*} 
Then we have
\begin{align*}
(\delta_{G}Q)(\p_k)=-\rho^{-1}(Q_{1k,1}+Q_{2k,2}),
\end{align*}
for $k=1,2$ on $U$, where $Q_{ij,k}:=\p_kQ_{ij}$. Using $\delta_{G}Q=0$, $Q_{22}=-Q_{11}$ and $Q_{12}=Q_{21}$,  
we obtain
\[
Q_{11,1}=-Q_{12,2}\quad {\rm and}\quad Q_{11,2}=Q_{12,1},
\]
hence, $Q_{11}-\sqrt{-1}Q_{12}$ is a holomorphic function on $U$ since it satisfies the Cauchy-Riemann equation.
This shows (i).

Let us show (ii).
If $\|Q\|_G^2\equiv 0$ or equivalently $Q\equiv 0$, then the statement is obvious. We may assume the zeros of the function $\|Q\|_G^2$ is isolated since $Q_{11}-\sqrt{-1}Q_{12}$ is holomorphic by (i).
Take any $p\in S$ with $\|Q\|_G^2(p)\neq 0$ and an isothermal coordinate $(U, z=x^1+\sqrt{-1} x^2)$ around $p$ so that  $G=\rho((dx^1)^2+(dx^2)^2)$ and $\|Q\|_G^2\neq 0$ on $U$.   
Note that,  we have $\Delta_G=\rho^{-1}(\p_1^2+\p_2^2)$ in the isothermal coordinate. 

We have
\[
\|Q\|_G^2=2\rho^{-2}(Q_{11}^2+Q_{12}^2)
\]
since $Q$ is symmetric and trace free tensor. 
Taking the logarithm of this equation, we have $\log\|Q\|_G^2=\log|Q|^2-2\log\rho+\log2$, where we set $|Q|^2:=|Q_{11}-\sqrt{-1}Q_{12}|^2=Q_{11}^2+Q_{12}^2$,  and hence, we obtain
\begin{align}\label{dlog}
\Delta_G\log\|Q\|_G^2&=\Delta_{G}\log|Q|^2-2\Delta_G\log\rho
\end{align} 
Since $Q_{11}-\sqrt{-1}Q_{12}$ is holomorphic on $U$ by (i), we see $\Delta_{G}\log|Q|^2=0$ on $U$. 
On the other hand,  
in the isothermal coordinate, 
the sectional curvature $K$ is given by $K=-(1/2)\Delta_G\log \rho$. Since we assume $K=-1$, the equation \eqref{dlog} becomes
\[
\Delta_G\log\|Q\|_G^2=-4.
\]
By using the formula $\Delta_G\log F=\frac{\Delta_G F}{F}-\frac{ |\nabla F|_G^2}{F^2}$, we obtain
\[
\frac{\Delta_G\|Q\|_G^2}{\|Q\|_G^2}+4=\frac{|\nabla \|Q\|_G^2|_G^2}{\|Q\|_G^4}\geq 0,
\]
as required, and conclude the proof of (ii).
\qed

\begin{remark}\label{Rem:QD}
Given an isothermal coordinate $z=x^1+\sqrt{-1}x^2$ on $(S,G)$ with $G(z)=\rho(z)|dz|^2$, we define $\Phi:=(Q_{11}-\sqrt{-1}Q_{12})dz^2$. It turns out that $\Phi$ defines a global section of $(T^{1,0}S)^*\otimes (T^{1,0}S)^*$ and the section is holomorphic by Lemma \ref{Lem:Qholomorphic} (1). The section $\Phi$ is a {\it holomorphic quadratic differential} on the Riemann surface $S$. 
One verifies  that there exists a one-to-one correspondence between $Q$ and $\Phi$.
\end{remark}

We define an $L^2$-pairing on $T_G{\rm Met}(S)$  by
\begin{align*}
\langle Q_1, Q_2\rangle:=\int_{S} \langle Q_1(x), Q_2(x)\rangle_{G(x)}\,d\mu_G(x) \quad \text{for $Q_1,Q_2\in T_G{\rm Met}(S)$},
\end{align*}
where $\m_G$ is the area form with respect to $G$.
It induces an $L^2$-pairing on $T_{[G]}\mathcal{T}(S)$ and a Riemannian metric on $\mathcal{T}(S)$.
This metric is called the {\it Weil-Petersson metric} on $\Tc(S)$.  
Then, the mapping class group $\Mod(S)$ acts on $\Tc(S)$ as isometries of the Weil-Petersson metric.

Let $[G_{s}]\in \mathcal{T}(S)$ be a geodesic relative to the Weil-Petersson metric for $s \in (-\epsilon, \epsilon)$ and for $\e>0$.
Then, we can lift $\{[G_{s}]\}_{s \in (-\e, \e)}$ to a horizontal geodesic $\{G_{s}\}_{s \in (-\e, \e)}$ in $\mathcal{M}_{-1}(S)$ since $\Mc_{-1}(S)\to \Tc(S)$ is a Riemannian submersion (\cite[Proposition 2.3.1]{Yamada}, and see also \cite[Section 3.6]{Yamada2}).  
By Proposition \ref{Prop:smooth},
there exists a family of harmonic maps $h_{s}: X \to (S, G_s)$ for $s \in (-\e, \e)$ associated with $G_{s}$ in the fixed homotopy class $\mathcal{C}=[f]$
such that $h_{s}$ is differentiable with respect to $s \in (-\e, \e)$.
Let us define the associated variational vector field
\begin{align*}
V_e(s,t)&:=\frac{\p }{\p s}h_{s, e}(t) \quad \text{for each edge $e\in E$},
\end{align*}
where $h_{s, e}:[0, 1] \to S$ is the restriction of $h_s$ on the edge $e$.

Recall that the energy functional $\mathcal{E}_{\mathcal{C}}$ for $[G_{s}]$ is given by
\begin{align*}
\mathcal{E}_{\mathcal{C}}([G_{s}]):=E_{G_{s}}(h_{s})&=
\frac{1}{2}\sum_{e\in E}m_E(e)\int_{0}^1(h_{s}^*G_{s})(\p_{t},\p_{t})dt,
\end{align*}
where and henceforth we use the short hand notation $\p_t=d/dt$ on $[0, 1]$ for the sake of simplicity.

We shall consider the first and second derivatives of $\mathcal{E}_{\mathcal{C}}([G_{s}])$.
First, for any fixed $s \in (-\e, \e)$, we have
\begin{align}\label{harm}
0=\frac{\p}{\p u} E_{G_{s}}(h_{u})\Big{|}_{u=s}=\frac{1}{2}\sum_{e\in E}m_E(e)\int_{0}^1h_{s}^*(L_{V_e(s)}G_{s})(\p_{t},\p_{t})dt.
\end{align}
since $h_{s}: X\to (S, G_{s})$ is harmonic for any $s \in (-\e, \e)$ and the energy $E_{G_{s}}(h_{u})$ is minimum at $u=s$ by Theorem \ref{Thm:KSharmonic} (i) and (ii).
Therefore we have that
\begin{align}\label{Eq:harmQ}
\frac{d}{d s}\mathcal{E}_{\mathcal{C}}([G_{s}])&=\frac{1}{2}\sum_{e\in E}m_E(e)\int_{0}^1{\p_s}(h_{s}^*G_{s})(\p_{t},\p_{t})dt\nonumber\\
&=\frac{1}{2}\sum_{e\in E}m_E(e)\int_{0}^1\Big{\{}h_{s}^*\({\p_s}{G}_{s}\)+h_{s}^*(L_{V_e(s)}G_{s})\Big{\}}(\p_{t},\p_{t})dt. \nonumber\\ 
&=\frac{1}{2}\sum_{e\in E}m_E(e)\int_{0}^1h_{s}^*\({\p_s}{G}_{s}\)(\p_{t},\p_{t})dt,   
\end{align}
where we have used \eqref{harm} in the last equality.
The above computation yields a characterization of critical points for $\Ec_\Cc$.
We prove the following proposition regarding the first variation of $\Ec_\Cc$ although we will not make use of it in this paper.

\begin{proposition}
For any $[G]$ in $\Tc(S)$,
$[G]$ is a critical point of $\Ec_\Cc$ on $\Tc(S)$ if and only if any harmonic map $h:X \to (S, G)$ in the homotopy class $\Cc$ satisfies that
\[
\sum_{e \in E}m_E(e)\int_0^1 \F(h_{\ast}\p_t, h_{ \ast}\p_t)\,dt=0,
\]
for any holomorphic quadratic differentials $\F$ on $(S, G)$.
\end{proposition}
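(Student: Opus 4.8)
The plan is to read the characterization directly off the first variation formula \eqref{Eq:harmQ}, after translating the real tensorial pairing appearing there into a pairing against a holomorphic quadratic differential by means of the correspondence in Remark \ref{Rem:QD}. First I would fix $[G] \in \Tc(S)$ and recall that $T_{[G]}\Tc(S)$ is identified with the space of symmetric, trace-free, divergence-free $(0,2)$-tensors $Q$ on $(S,G)$, which by Lemma \ref{Lem:Qholomorphic} and Remark \ref{Rem:QD} is in one-to-one correspondence with the complex space of holomorphic quadratic differentials $\F=(Q_{11}-\sqrt{-1}Q_{12})\,dz^2$. Given any such $Q$, I would choose a smooth curve $[G_s]$ in $\Tc(S)$ with $[G_0]=[G]$ whose horizontal lift $\{G_s\}\subset\Mc_{-1}(S)$ satisfies $\p_s G_s|_{s=0}=Q$ (horizontality makes $\p_s G_s$ itself trace-free and divergence-free, hence a representative of $Q$), and let $h_s$ be the associated smooth family of harmonic maps provided by Proposition \ref{Prop:smooth}. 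Since the derivation of \eqref{Eq:harmQ} used only that $G_s$ is a horizontal lift and $h_s$ the corresponding harmonic maps, evaluating it at $s=0$ gives
\[
\frac{d}{ds}\Ec_\Cc([G_s])\Big|_{s=0}=\frac{1}{2}\sum_{e \in E}m_E(e)\int_0^1 Q(h_\ast \p_t, h_\ast \p_t)\,dt,
\]
so that $[G]$ is a critical point of $\Ec_\Cc$ if and only if the right-hand side vanishes for every admissible $Q$.

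The key computation is the pointwise identity $Q(h_\ast \p_t, h_\ast \p_t)=\operatorname{Re}\F(h_\ast \p_t, h_\ast \p_t)$ in an isothermal chart $z=x^1+\sqrt{-1}x^2$. Writing $h_\ast \p_t = \dot x^1 \p_1 + \dot x^2 \p_2$ and using $Q_{22}=-Q_{11}$ together with $dz^2(h_\ast \p_t, h_\ast \p_t)=(\dot x^1+\sqrt{-1}\dot x^2)^2$, a direct expansion of $(Q_{11}-\sqrt{-1}Q_{12})(\dot x^1+\sqrt{-1}\dot x^2)^2$ shows that its real part equals $Q_{11}((\dot x^1)^2-(\dot x^2)^2)+2Q_{12}\dot x^1\dot x^2$, which is exactly $Q(h_\ast \p_t, h_\ast \p_t)$. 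Consequently the first variation above equals $\tfrac12\operatorname{Re}\sum_{e}m_E(e)\int_0^1\F(h_\ast\p_t,h_\ast\p_t)\,dt$, and $[G]$ is critical if and only if this real part vanishes for all holomorphic quadratic differentials $\F$.

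Finally I would upgrade the vanishing of the real part to the vanishing of the full complex integral, which is where the complex structure of the space of holomorphic quadratic differentials enters. If $\F$ is holomorphic, so is $\sqrt{-1}\F$, and it corresponds to another admissible tangent direction $Q'$ (with $Q'_{11}=Q_{12}$ and $Q'_{12}=-Q_{11}$, still trace-free and divergence-free, hence holomorphic by Lemma \ref{Lem:Qholomorphic}). Since $\operatorname{Re}(\sqrt{-1}\F)=-\operatorname{Im}\F$, the criticality condition applied to $Q'$ forces the imaginary part of the integral to vanish as well. Thus $[G]$ is critical precisely when $\sum_{e}m_E(e)\int_0^1\F(h_\ast\p_t,h_\ast\p_t)\,dt=0$ for every holomorphic quadratic differential $\F$, which is the asserted equivalence (the converse being immediate, as vanishing of the complex integral forces its real part, and hence every directional derivative, to vanish). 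I expect the main obstacle to be precisely this last step: one must verify that replacing $\F$ by $\sqrt{-1}\F$ again produces a genuine tangent vector of $\Tc(S)$, so that ``real part vanishes in all directions'' indeed promotes to the stated complex identity. By contrast, the identification of $Q$-directions with $\F$ and the isothermal computation are routine given Lemma \ref{Lem:Qholomorphic} and Remark \ref{Rem:QD}.
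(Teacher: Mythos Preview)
Your proposal is correct and follows essentially the same route as the paper's own proof: deduce from \eqref{Eq:harmQ} that criticality is equivalent to the vanishing of $\sum_e m_E(e)\int_0^1 Q(h_\ast\p_t,h_\ast\p_t)\,dt$ for all $Q\in T_{[G]}\Tc(S)$, rewrite this pairing in an isothermal chart as $\operatorname{Re}\,\F(h_\ast\p_t,h_\ast\p_t)$ via the correspondence of Remark \ref{Rem:QD}, and then pass from the real to the complex statement using that $\sqrt{-1}\F$ is again a holomorphic quadratic differential. Your explicit verification that the associated $Q'$ remains trace-free and divergence-free is a slight elaboration, but the argument is the same.
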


\proof
It follows that $[G] \in \Tc(S)$ is a critical point of $\Ec_\Cc$ if and only if for any harmonic map $h:X \to (S, G)$ in the homotopy class $\Cc$, one has
\[
\sum_{e \in E}m_E(e)\int_0^1 Q(h_{ \ast}\p_t, h_{\ast}\p_t)\,dt=0,
\]
for any symmetric $(0, 2)$-tensors $Q$ such that $\tr_{G}Q=0$ and $\d_{G}Q=0$
from \eqref{harm} and \eqref{Eq:harmQ}.
In any isothermal coordinate $(U, z=x^1+\sqrt{-1}x^2)$ on $(S, G)$, we have
\[
Q(h_{ \ast}\p_t, h_{\ast}\p_t)={\rm Re}\,\left\{(Q_{11}-\sqrt{-1}Q_{12})(h(t))\cdot h'(t)^2\right\},
\]
where we regard $h(t)$ as a complex valued function in the right hand side.
Since there is a one-to-one correspondence between $Q \in T_{[G]}\Tc(S)$ and $\F$ holomorphic quadratic differentials on $(S, G)$, where $\F(z)=\f(z)dz^2$ on $U$ for some holomorphic function $\f$ (Remark \ref{Rem:QD}), and if $\F$ is a holomorphic quadratic differential, then $\sqrt{-1}\F$ is also so;
thus we obtain the claim.
\qed

Differentiating \eqref{Eq:harmQ} in $s$ once more, we obtain
\begin{align}\label{esvf}
\frac{d^2}{d s^2}\mathcal{E}_{\mathcal{C}}([G_{s}])\Big{|}_{s=0}
&=\frac{1}{2}\sum_{e\in E}m_E(e)\int_{0}^1h_0^*\Big{\{}{\p_s^2}G_{s}\Big{|}_{s=0}+L_{V_e(0)}\Big({\p_s}G_{s}\Big{|}_{s=0}\Big)\Big{\}}(\p_{t},\p_{t})dt.
\end{align}
Here, the second term of the right hand side of \eqref{esvf} is arranged as follows due to the harmonicity:

\begin{lemma}
\begin{align}
\label{harm3}
\frac{1}{2}\sum_{e\in E}m_E(e)\int_{0}^1 h_0^*\Big(L_{V_e}{\p_s}G_{s}\Big{|}_{s=0}\Big)(\p_{t},\p_{t})dt=-\frac{d^2}{d s^2}E_{G_{0}}(h_{s})\Big{|}_{s=0}.
\end{align}
\end{lemma}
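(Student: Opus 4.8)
The plan is to read the claimed identity as an instance of the symmetry of mixed second partial derivatives, applied to the two-variable energy
\[
F(s,u):=E_{G_s}(h_u)=\frac{1}{2}\sum_{e\in E}m_E(e)\int_0^1 (h_u^\ast G_s)(\p_t,\p_t)\,dt,
\]
which decouples the dependence on the metric (the variable $s$) from the dependence on the map (the variable $u$). By Proposition \ref{Prop:smooth} the family $h_s$ depends smoothly on $s$, so $F$ is jointly smooth in $(s,u)$ and its mixed partials commute. Note that $\mathcal{E}_{\mathcal{C}}([G_s])=F(s,s)$, while $E_{G_0}(h_s)=F(0,s)$ is the restriction of $F$ along the map variable with the metric frozen at $G_0$.

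First I would identify each side of \eqref{harm3} with a second partial of $F$ at $(0,0)$. For the right-hand side this is immediate: differentiating $F(0,u)$ twice in $u$ gives $\p_u^2 F(0,0)=\tfrac{d^2}{ds^2}E_{G_0}(h_s)\big|_{s=0}$. For the left-hand side I would first compute $\p_s F(s,u)=\tfrac12\sum_{e\in E}m_E(e)\int_0^1 (h_u^\ast \p_s G_s)(\p_t,\p_t)\,dt$, where only $G_s$ carries the $s$-dependence, and then differentiate in $u$ using the pullback formula $\p_u(h_u^\ast T)=h_u^\ast(L_{V_e(u)}T)$ for a fixed $(0,2)$-tensor $T$ (the same map-variation identity already invoked in the derivation of \eqref{Eq:harmQ}, there with $G_s$ in place of $T$). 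Evaluating at $(s,u)=(0,0)$ with $T=\p_s G_s\big|_{s=0}$ yields
\[
\p_u\p_s F(0,0)=\frac{1}{2}\sum_{e\in E}m_E(e)\int_0^1 h_0^\ast\bigl(L_{V_e}\p_s G_s\big|_{s=0}\bigr)(\p_t,\p_t)\,dt,
\]
which is exactly the left-hand side of \eqref{harm3}.

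The heart of the argument is then to exploit the harmonicity relation \eqref{harm}, which asserts precisely that the first variation of $F$ in the map variable vanishes along the diagonal, i.e. $(\p_u F)(s,u)\big|_{u=s}\equiv 0$ as an identity in $s$. Differentiating this identity totally in $s$ (legitimate by the smoothness from Proposition \ref{Prop:smooth}) gives $(\p_s\p_u F)(s,s)+(\p_u^2 F)(s,s)=0$; at $s=0$, together with the equality of mixed partials $\p_s\p_u F=\p_u\p_s F$, this reads $\p_u\p_s F(0,0)=-\p_u^2 F(0,0)$. Combining with the two identifications of the previous paragraph gives the asserted equality $\tfrac12\sum_{e\in E}m_E(e)\int_0^1 h_0^\ast(L_{V_e}\p_s G_s|_{s=0})(\p_t,\p_t)\,dt=-\tfrac{d^2}{ds^2}E_{G_0}(h_s)\big|_{s=0}$.

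I expect the main obstacle to be the analytic justification rather than the formal bookkeeping: one must differentiate under the integral sign and interchange the $u$-derivative with the pullback and Lie-derivative operations, and—more importantly—one must know that $F$ is genuinely $C^2$ jointly in $(s,u)$, so that Schwarz's theorem applies and the diagonal identity \eqref{harm} may be differentiated in $s$. All of this rests on the smooth dependence of the harmonic maps $h_s$ on $s$ supplied by Proposition \ref{Prop:smooth}; once that regularity is in hand, the remaining manipulations reduce to the routine first- and second-variation computations of Lemma \ref{Lem:first and second}.
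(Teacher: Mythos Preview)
Your argument is correct and is essentially the paper's own proof, only repackaged in cleaner two-variable language: the paper differentiates the harmonicity identity \eqref{harm} in $s$ (your total derivative of $\partial_u F(s,s)\equiv 0$), then invokes the explicit identity $h_s^\ast(\partial_s L_{V_e(s)}G_s)=h_s^\ast(L_{V_e}\partial_s G_s)+h_s^\ast L_{V_e}L_{V_e}G_s$, which is nothing but your Schwarz step $\partial_s\partial_u F=\partial_u\partial_s F$ written out in Lie-derivative form, and finally identifies $\tfrac{d^2}{ds^2}E_{G_0}(h_s)|_{s=0}$ with the $L_{V_e}L_{V_e}G_0$ term (your $\partial_u^2 F(0,0)$). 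The content and the regularity inputs (Proposition~\ref{Prop:smooth}) are identical; your formulation simply makes the structure of the computation more transparent.
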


\begin{proof}
Differentiating \eqref{harm} with respect to the variable $s$ at $s=0$,
we have
\begin{align}\label{e31}
0&=\frac{1}{2}\sum_{e\in E}m_E(e)\int_{0}^1h^*_0\Big{\{}{\p_s}(L_{V_e(s)}G_{s})\Big{|}_{s=0}
+(L_{V_e(s)}L_{V_e(s)}G_{s}){|}_{s=0}\Big{\}}(\p_{t},\p_{t})dt\nonumber\\
&=\frac{1}{2}\sum_{e\in E}m_E(e)\int_{0}^1h_0^*\Big{\{}\Big(L_{V_e}{\p_s}G_{s}\Big{|}_{s=0}\Big)+2(L_{V_e}L_{V_e}G_{s})|_{s=0}\Big{\}}(\p_{t},\p_{t})dt,
\end{align}
where we used the following identity in the second equality:
\begin{align*}
h_s^*({\p_s}L_{V_e(s)}G_{s})
&=\frac{\p }{\p s}\frac{\p}{\p u}h_{u+s}^*G_{s}\Big{|}_{u=0}\\
&=\frac{\p }{\p u}\frac{\p}{\p s}h_{u+s}^*G_{s}\Big{|}_{u=0}\nonumber\\
&=\frac{\p}{\p u}\Big(h_{u+s}^*\frac{\p }{\p s}G_{s}+h^*_{u+s}L_{V_e}G_{s}\Big)\Big{|}_{u=0}\nonumber\\
&=h_{s}^*\Big(L_{V_e}\frac{\p }{\p s}G_{s}\Big)+h_{s}^*L_{V_e}L_{V_e}G_{s}.\nonumber
\end{align*}

On the other hand, a similar computation shows that
\begin{align}\label{e32}
\frac{d^2}{d s^2}E_{G_{0}}(h_{s})\Big{|}_{s=0}
&=\frac{1}{2}\sum_{e\in E} m_E(e) \int_0^1 h_0^*\Big\{(L_{V_e}L_{V_e}G_{0})|_{s=0}+\p_s(L_{V_e(s)}G_{0})|_{s=0}\Big\}(\p_t,\p_t)\,dt\nonumber\\
&=\sum_{e\in E} m_E(e) \int_0^1 h_0^*(L_{V_e}L_{V_e}G_{0})|_{s=0}(\p_t,\p_t)\,dt,
\end{align}
where we have used $h_0^\ast \p_s(L_{V_e(s)}G_{0})|_{s=0}=h_{0}^*L_{V_e}L_{V_e}G_{0}$ for $h_{0}^*\Big(L_{V_e}\frac{\p }{\p s}G_{0}\Big)=0$ in the last equality.
Combining \eqref{e31} with \eqref{e32}, we obtain \eqref{harm3}.
\end{proof}

Substituting \eqref{harm3} to \eqref{esvf}, we have that 
\begin{align}\label{esvf2}
\frac{d^2}{d s^2}\mathcal{E}_{\mathcal{C}}([G_{s}])\Big{|}_{s=0}
&=\frac{1}{2}\sum_{e\in E}m_E(e)\int_{0}^1h_0^*\(\frac{d^2}{d s^2}G_{s}\)\Big{|}_{s=0}(\p_t,\p_t)dt
-\frac{d^2}{d s^2}E_{G_{0}}(h_{s})\Big{|}_{s=0}.
\end{align}
The first term in the right hand side of \eqref{esvf2} can be computed by using the differential geometry of Teichm\"uller space (without any assumption for $h_{s}$).

\begin{proposition}[Theorem 3.8 in \cite{Yamada2} (cf.\ Theorem 2.3.2 in \cite{Yamada})]\label{key1}
For any horizontal lift $\{G_{s}\}_{s \in (-\e, \e)}$ of
a geodesic $\{[G_{s}]\}_{s \in (-\e, \e)}$ for $\e>0$ in the Teichm\"uller space $\Tc(S)$ relative to the Weil-Petersson metric with velocity vector $Q:=(\p/\p s)G_s|_{s=0}$, we have
\begin{align*}
\frac{d^2}{d s^2}G_{s}\Big{|}_{s=0}&=\(\frac{1}{4}-\frac{1}{2}(\Delta_{G_0}-2)^{-1}\)\|Q\|_{G_0}^2\cdot G_0
+L_{Z}G_0,
\end{align*}
for some (auxiliary) vector filed $Z$ on $S$. 
\end{proposition}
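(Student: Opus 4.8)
The plan is to compute $R:=(\partial^2/\partial s^2)G_s|_{s=0}$ directly and to read off its components relative to the pointwise $L^2$-orthogonal splitting of symmetric $(0,2)$-tensors on $(S,G_0)$ into a transverse--traceless (TT) part, a pure Lie-derivative part $L_Z G_0$, and a conformal part $\phi\cdot G_0$. Concretely, I would reduce the statement to two assertions: that the TT component of $R$ vanishes, and that the conformal factor is $\phi=(\tfrac14-\tfrac12(\Delta_{G_0}-2)^{-1})\|Q\|_{G_0}^2$. Two structural constraints are available throughout. First, since $\{G_s\}$ is a horizontal lift, $\dot G_s$ is TT for every $s$, so $\tr_{G_s}\dot G_s\equiv 0$ and $\delta_{G_s}\dot G_s\equiv 0$; differentiating the trace identity at $s=0$ and using $(d/ds)G_s^{ij}=-G_s^{ik}G_s^{jl}(\dot G_s)_{kl}$ yields $\tr_{G_0}R=\|Q\|_{G_0}^2$. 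Second, $G_s\in\Mc_{-1}(S)$ for all $s$, so the Gauss curvature satisfies $K(G_s)\equiv -1$.

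For the vanishing of the TT part I would use that $\Mc_{-1}(S)\to\Tc(S)$ is a Riemannian submersion with horizontal distribution exactly the TT tensors, so a Weil--Petersson geodesic lifts to a geodesic of the $L^2$ (Ebin) metric on $\Mc_{-1}(S)$; by O'Neill's formula this says precisely that the horizontal (TT) part of the $L^2$-covariant acceleration vanishes. Because the TT tensors form a subspace of $T_{G_0}\Mc_{-1}(S)$, this horizontal part equals the TT-part of the ambient $L^2$-acceleration $R+\Gamma_{G_0}(Q,Q)$, where $\Gamma_{G_0}$ is the Christoffel symbol of the Ebin metric on ${\rm Met}(S)$. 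Evaluating $\Gamma_{G_0}(Q,Q)$ for the trace-free tensor $Q$, the two-dimensional Cayley--Hamilton identity $QG_0^{-1}Q=\tfrac12\|Q\|_{G_0}^2\,G_0$ (valid for trace-free symmetric $Q$) collapses the Christoffel term to a multiple of $\|Q\|_{G_0}^2\,G_0$, which is purely conformal and hence has no TT component. Therefore the TT part of $R$ itself must vanish.

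It then remains to pin down $\phi$ from the curvature constraint. Differentiating $K(G_s)\equiv -1$ twice at $s=0$ gives $DK_{G_0}(R)=-D^2K_{G_0}(Q,Q)$. On the conformal direction the linearized Gauss curvature is $DK_{G_0}(\phi\,G_0)=-\tfrac12(\Delta_{G_0}-2)\phi$, coming from $K(e^{2u}G_0)=e^{-2u}(-1-\Delta_{G_0}u)$, while $DK_{G_0}$ annihilates the Lie-derivative part (diffeomorphism invariance of $K$) and the TT part (trace and divergence free); hence $DK_{G_0}(R)=-\tfrac12(\Delta_{G_0}-2)\phi$. For the right-hand side I would compute the Hessian of the Gauss curvature in the TT direction $Q$ and simplify it with Lemma \ref{Lem:Qholomorphic}: holomorphicity of $Q_{11}-\sqrt{-1}Q_{12}$ gives $\Delta_{G_0}\log\|Q\|_{G_0}^2=-4$, and one reduces the Hessian to $D^2K_{G_0}(Q,Q)=\tfrac18\Delta_{G_0}\|Q\|_{G_0}^2-\tfrac12\|Q\|_{G_0}^2$. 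Equating the two expressions gives $(\Delta_{G_0}-2)\phi=\tfrac14\Delta_{G_0}\|Q\|_{G_0}^2-\|Q\|_{G_0}^2$; since $\Delta_{G_0}$ is non-positive the operator $\Delta_{G_0}-2$ is invertible, and inverting yields $\phi=\tfrac14\|Q\|_{G_0}^2-\tfrac12(\Delta_{G_0}-2)^{-1}\|Q\|_{G_0}^2$, the auxiliary $Z$ absorbing the remaining Lie-derivative part whose divergence is then fixed consistently by $\tr_{G_0}R=\|Q\|_{G_0}^2$.

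I expect the main obstacle to be the explicit second-variation computation of the Gauss curvature along a TT deformation and its reduction to the clean form $\tfrac18\Delta_{G_0}\|Q\|_{G_0}^2-\tfrac12\|Q\|_{G_0}^2$: this requires expanding $K(G_s)$ to second order in isothermal coordinates, carefully tracking the Christoffel and divergence-of-divergence contributions, and using the TT condition together with the holomorphicity of $Q$ to cancel the many lower-order terms, with the delicate point being the correct constant in front of $(\Delta_{G_0}-2)^{-1}$. A secondary obstacle is the clean identification of the Ebin-metric Christoffel symbol on ${\rm Met}(S)$ and the verification that $\Mc_{-1}(S)\to\Tc(S)$ is a Riemannian submersion with TT horizontal distribution, so that O'Neill's formula legitimately forces the TT part of $R$ to vanish.
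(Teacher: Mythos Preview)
The paper does not supply its own proof of this proposition: it is quoted verbatim from Yamada's work (Theorem~3.8 in \cite{Yamada2}, cf.\ Theorem~2.3.2 in \cite{Yamada}) and used as a black box in deriving the second-variation formula \eqref{svff}. So there is nothing in the paper to compare your argument against line by line.

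That said, your outline is the correct one and is essentially the strategy behind the cited result (going back to Fischer--Tromba and Yamada): decompose $R=\partial_s^2G_s|_{s=0}$ into its TT, Lie-derivative, and conformal pieces; kill the TT piece via the horizontal geodesic equation for the Riemannian submersion $\Mc_{-1}(S)\to\Tc(S)$; and read off the conformal factor from the constraint $K(G_s)\equiv -1$ differentiated twice. Your bookkeeping checks out: $\tr_{G_0}R=\|Q\|_{G_0}^2$ from differentiating $\tr_{G_s}\dot G_s=0$, the linearization $DK_{G_0}(\phi G_0)=-\tfrac12(\Delta_{G_0}-2)\phi$, and the invertibility of $\Delta_{G_0}-2$ all lead to the stated $\phi$. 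The two places that deserve care are exactly the ones you flag. For the vanishing of the TT part, note that you are really using two facts at once: the horizontal lift of a Weil--Petersson geodesic is a geodesic in $\Mc_{-1}(S)$, so $\nabla^{\Met}_{\dot G}\dot G$ lies in the $L^2$-normal bundle of $\Mc_{-1}(S)\subset\Met(S)$; and TT tensors are $L^2$-orthogonal to both the Lie-derivative and the conformal directions, so the TT component of $R+\Gamma^{\Met}_{G_0}(Q,Q)$ vanishes. Your Cayley--Hamilton reduction $QG_0^{-1}Q=\tfrac12\|Q\|_{G_0}^2G_0$ then shows the Ebin Christoffel term has no TT part, hence neither does $R$. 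For the Hessian $D^2K_{G_0}(Q,Q)$, the cleanest route is exactly what you suggest: work in an isothermal chart, use Lemma~\ref{Lem:Qholomorphic}(i) to replace derivatives of $Q_{ij}$ by the Cauchy--Riemann relations, and collapse everything to $\tfrac18\Delta_{G_0}\|Q\|_{G_0}^2-\tfrac12\|Q\|_{G_0}^2$; this is where the constants are easy to slip on, so it is worth writing out once in full.
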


Thus, the rest is to compute the second derivative of ${E}_{G_0}(h_{s})$. 
For each edge $e\in E$, let
\[
T_e^s(t):=\p_th_{s, e}(t),
\]
and $T_e(t)=T_e^0(t)$ when $s=0$ for simplicity.

\begin{definition}\label{Def:V}
For any velocity vector $Q:=(\p_s G_s)|_{s=0}$, we define a $1$-form on $S$ along $h_{0, e}(t)$ by
\[
Q_e:=Q(T_e, \cdot \, ) \quad \text{for any $e \in E$},
\]
and we denote its $G_0$-metric dual by $Q_e^{\sharp}$, that is, $Q_e^{\sharp}$ satisfies that $G_0(Q_e^{\sharp}, \cdot)=Q_e$. Furthermore, we define a vector field along $h_{0, e}(t)$ by
\begin{align}\label{VW}
\mathcal{V}_e:=
\frac{1}{\|T_e\|_{G_0}}\Big(\nabla_{T_e}V_e+\frac{Q_e^{\sharp}}{2}\Big)\quad \textup{if}\ \|T_e\|_{G_0}\neq 0,
\end{align}
and
$\mathcal{V}_e\equiv 0\ \textup{if}\ \|T_e\|_{G_0}=0$
for any $e \in E$.
\end{definition}

The following proposition holds for any horizontal lift of a smooth curve in $\Tc(S)$ (not necessarily a Weil-Petersson geodesic).

\begin{proposition}\label{key2}
Let $\{[G_s]\}_{s \in (-\e, \e)}$ for $\e>0$ be any smooth curve in $\Tc(S)$.
For any horizontal lift $\{G_s\}_{s \in (-\e, \e)}$ of $\{[G_s]\}_{s \in (-\e, \e)}$ with velocity vector $Q:=(\p/\p s)G_s|_{s=0}$, and for any smooth family of harmonic maps $h_{s}:X \to (S, G_s)$ for $s \in (-\e, \e)$ with the variational vector field $V$, we have
\begin{align*}
\frac{d^2}{d s^2}E_{G_0}(h_{s})\Big{|}_{s=0}&=\frac{1}{2}\sum_{e\in E}m_E(e)\int_0^1G_0(\nabla_{T_e}Q_e^{\sharp}, V_e)\, dt\nonumber\\
&=\frac{1}{2}\sum_{e\in E}m_E(e)\int_0^1\Big(
\frac{1}{4}\|Q\|_{G_0}^2-2\|V_e^{\perp}\|_{G_0}^2-2\|\mathcal{V}_e\|_{G_0}^2
\Big)\cdot \|T_e\|_{G_0}^2\,dt,\nonumber
\end{align*}
where $V_e^{\perp}$ denotes the $G_0$-normal component of $V_e$ with respect to $T_e$ if $T_e \neq 0$ and $0$ otherwise, and $\mathcal{V}_e$ is the vector field on $(S, G_0)$ defined in Definition \ref{Def:V}.
\end{proposition}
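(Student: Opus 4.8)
The plan is to compute $\frac{d^2}{ds^2}E_{G_0}(h_s)\big|_{s=0}$ head-on and then rewrite it, using both that each $h_s$ is $G_s$-harmonic and that $Q\in T_{[G_0]}\Tc(S)$ is transverse-traceless (${\rm tr}_{G_0}Q=0$ and $\delta_{G_0}Q=0$). First I would apply the second variation formula \eqref{Eq:secondVF} of Lemma \ref{Lem:first and second} to the variation $h_s$ at $s=0$, with variational field $V_e$ and edge-tangent field $T_e$. Because $h_0$ is $G_0$-harmonic we have $\nabla_{T_e}T_e=0$ on each edge and the balanced condition holds at the vertices; after integrating by parts on each edge, the terms carrying the $G_0$-acceleration $\nabla_{V_e}V_e$ collapse to the vertex sums $\sum_{e\in E_x}m_E(e)\,G_0(\nabla_{V_e}V_e,T_e(0))$, which vanish since $\nabla_{V_e}V_e$ is common to the half-edges at $x$ while $\sum_{e\in E_x}m_E(e)T_e(0)=0$. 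Using $K\equiv-1$ this leaves
\[
\frac{d^2}{ds^2}E_{G_0}(h_s)\Big|_{s=0}=\sum_{e\in E}m_E(e)\int_0^1\Big(\|\nabla_{T_e}V_e\|_{G_0}^2+\|V_e^{\perp}\|_{G_0}^2\|T_e\|_{G_0}^2\Big)\,dt,
\]
which I will refer to as $(\star)$.

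To bring in $Q$ I would differentiate the harmonic-map constraint. Since $h_{s,e}$ is a $G_s$-geodesic, differentiating $\nabla^{G_s}_{T_e^s}T_e^s=0$ at $s=0$ gives the inhomogeneous Jacobi equation $\nabla_{T_e}\nabla_{T_e}V_e+R(V_e,T_e)T_e+\dot A(T_e,T_e)=0$, where $\dot A=\tfrac{d}{ds}\big|_0\nabla^{G_s}$ is the variation of the Levi-Civita connection, determined by $Q$ through $G_0(\dot A(Y,Z),W)=\tfrac12\big((\nabla_YQ)(Z,W)+(\nabla_ZQ)(Y,W)-(\nabla_WQ)(Y,Z)\big)$. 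Integrating $\int_0^1\|\nabla_{T_e}V_e\|_{G_0}^2\,dt$ by parts in $(\star)$, substituting this Jacobi equation, and using $G_0(R(V_e,T_e)T_e,V_e)=-\|V_e^{\perp}\|_{G_0}^2\|T_e\|_{G_0}^2$ (so that the curvature term cancels the $\|V_e^{\perp}\|^2\|T_e\|^2$ in $(\star)$), together with the differentiated balanced condition $\sum_{e\in E_x}m_E(e)\nabla_{T_e}V_e(0)=0$ (obtained by covariantly differentiating the identity $\sum_{e\in E_x}m_E(e)T_e^s(0)\equiv0$, valid for every $s$) to discard the new vertex terms, I expect to reach
\[
\frac{d^2}{ds^2}E_{G_0}(h_s)\Big|_{s=0}=\sum_{e\in E}m_E(e)\int_0^1 G_0(\dot A(T_e,T_e),V_e)\,dt=\sum_{e\in E}m_E(e)\int_0^1\Big((\nabla_{T_e}Q)(T_e,V_e)-\tfrac12(\nabla_{V_e}Q)(T_e,T_e)\Big)\,dt.
\]

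Both equalities of the proposition should then follow from two pointwise facts about the transverse-traceless tensor $Q$. For a local $G_0$-orthonormal frame $\{u,n\}$, the condition $\delta_{G_0}Q=0$ gives $(\nabla_uQ)(u,n)+(\nabla_nQ)(n,n)=0$, while differentiating ${\rm tr}_{G_0}Q=0$ gives $(\nabla_nQ)(u,u)+(\nabla_nQ)(n,n)=0$, whence the Codazzi-type relation $(\nabla_uQ)(u,n)=(\nabla_nQ)(u,u)$. Decomposing $V_e=V_e^{\top}+V_e^{\perp}$, the tangential parts cancel and this relation upgrades to the pointwise identity $(\nabla_{V_e}Q)(T_e,T_e)=(\nabla_{T_e}Q)(T_e,V_e)$; since moreover $\nabla_{T_e}Q_e^{\sharp}=\big((\nabla_{T_e}Q)(T_e,\cdot)\big)^{\sharp}$ (because $\nabla_{T_e}T_e=0$), the last display collapses to $\tfrac12\sum_{e}m_E(e)\int_0^1 G_0(\nabla_{T_e}Q_e^{\sharp},V_e)\,dt$, which is the first equality. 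For the second equality I would expand $\|\mathcal V_e\|_{G_0}^2\|T_e\|_{G_0}^2=\|\nabla_{T_e}V_e+\tfrac12 Q_e^{\sharp}\|_{G_0}^2$, use trace-freeness in the form $\|Q_e^{\sharp}\|_{G_0}^2=\tfrac12\|Q\|_{G_0}^2\|T_e\|_{G_0}^2$ to cancel the $\|Q\|^2$ and $\|Q_e^{\sharp}\|^2$ terms, integrate $G_0(\nabla_{T_e}V_e,Q_e^{\sharp})$ by parts on each edge (the vertex terms again vanishing by the balanced condition), and finally match the outcome against $(\star)$ and the first equality.

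The routine but delicate part is the vertex bookkeeping: each integration by parts along the edges produces half-edge sums at the vertices, and one must repeatedly feed in the balanced condition and its $s$-derivative to see that they cancel. The genuinely geometric step, and where I expect the real work to sit, is the Codazzi-type relation $(\nabla_uQ)(u,n)=(\nabla_nQ)(u,u)$: this is precisely where both defining conditions of $T_{[G_0]}\Tc(S)$ are used, and it is what turns the asymmetric expression $(\nabla_{T_e}Q)(T_e,V_e)-\tfrac12(\nabla_{V_e}Q)(T_e,T_e)$ into the symmetric $\tfrac12 G_0(\nabla_{T_e}Q_e^{\sharp},V_e)$ that the statement demands.
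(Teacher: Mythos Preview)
Your proposal is correct and follows essentially the same route as the paper: start from the second variation formula for the $G_0$-harmonic map $h_0$ (your $(\star)$ is the paper's \eqref{hf1}), integrate by parts, feed in the differentiated geodesic equation to introduce $Q$, kill the vertex terms with the balanced condition and its $s$-derivative (the paper's Lemma~\ref{le2}), and then complete the square with $\mathcal V_e$ to reach the second equality.

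The one genuine point of difference is how you obtain the key identity $\dot A(T_e,T_e)=\tfrac12\nabla_{T_e}Q_e^{\sharp}$. The paper (Lemma~\ref{le3}) works in an isothermal normal coordinate at each point and invokes the Cauchy--Riemann equation for $Q_{11}-\sqrt{-1}Q_{12}$ from Lemma~\ref{Lem:Qholomorphic} to conclude that $Q_{ij,k}$ is totally symmetric, so that $\tfrac{\p}{\p s}\Gamma_{ij}^k(s)|_{s=0}$ collapses to a single partial derivative of $Q$. You instead use the invariant formula $G_0(\dot A(Y,Z),W)=\tfrac12\big((\nabla_YQ)(Z,W)+(\nabla_ZQ)(Y,W)-(\nabla_WQ)(Y,Z)\big)$ and deduce the Codazzi-type relation $(\nabla_uQ)(u,n)=(\nabla_nQ)(u,u)$ directly from $\delta_{G_0}Q=0$ and ${\rm tr}_{G_0}Q=0$; this yields exactly the total symmetry of $\nabla Q$ in dimension $2$, hence $(\nabla_{V_e}Q)(T_e,T_e)=(\nabla_{T_e}Q)(T_e,V_e)$ and the collapse to $\tfrac12 G_0(\nabla_{T_e}Q_e^{\sharp},V_e)$. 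The two arguments are equivalent---the Codazzi relation \emph{is} the Cauchy--Riemann equation in invariant form---but your version avoids choosing a coordinate and makes transparent exactly where each of the two transverse-traceless conditions is used.
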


\begin{remark}
By the formula given in Lemma \ref{le3} in the next subsection, the variational vector field $V_e(t)$ along $h_{0, e}(t)$ is determined by the velocity vector $Q=(\p_sG_s)|_{s=0}$ and the second order ordinary differential equation 
\begin{align}\label{deqv1}
\nabla_{T_e}\nabla_{T_e}V_e+R(V_e, T_e)T_e=-\frac{1}{2}\nabla_{T_e}Q_e^{\sharp}
\end{align}
with initial vector $V_e(x)$ and $\nabla_{T_e}V_e(x)$ for $x=h_{0, e}(0)$, where $R$ is the curvature tensor of $(S,G_0)$. 
Note that the variational vector filed $V$ is determined uniquely if we take a Weil-Petersson geodesic $\{[G_s]\}_{s \in (-\e, \e)}$ and an initial harmonic map $h_0: X\to S$ whose image is not a point nor closed circle by the uniqueness of the harmonic map.
Moreover, the vector $\mathcal{V}_e(t)$ defined by \eqref{VW} is a solution of the ordinary differential equation
\begin{align}\label{deqv2}
\nabla_{T_e}\mathcal{V}_e=-\frac{1}{\|T_e\|_{G_0}}R(V_e,T_e)T_e.
\end{align}
with initial vector $\mathcal{V}_e(h_{0, e}(0))$ if $\|T_e\|_{G_0}\neq 0$. 
\end{remark}

\def\Hess{{\rm Hess}}

We postpone the proof of Proposition \ref{key2} for the moment. Then, for any Weil-Petersson geodesic $\{[G_s]\}_{s \in (-\e, \e)}$ in $\Tc(S)$, the second variation formula \eqref{esvf2} becomes
\begin{align}\label{svff}
&\frac{d^2}{d s^2}\mathcal{E}_{\mathcal{C}}([G_{s}])\Big{|}_{s=0}\nonumber\\
&=\sum_{e\in E}m_E(e)\int_0^1\Big{\{}-\frac{1}{4}(\Delta_{G_0}-2)^{-1}\|Q\|_{G_0}^2+\|V_e^{\perp}\|_{G_0}^2+\|\mathcal{V}_e\|_{G_0}^2\Big{\}}\cdot \|T_e\|^2\,dt,
\end{align}
by Propositions \ref{key1} and \ref{key2}, and using the fact
\begin{align*}
\frac{1}{2}\sum_{e\in E}m_E(e)\int_0^1 h_0^*L_ZG(T_e,T_e)dt=0
\end{align*}
due to the harmonicity of $h_0$ (see \eqref{harm} for a similar identity).  More generally, we obtain the following Hessian formula:

\begin{theorem}\label{Thm:Hessian}
Let $\{[G_{r,s}]\}_{r, s \in (-\e, \e)}$ be any two-parameter family of Weil-Petersson geodesics in $\mathcal{T}(S)$ with $[G_0]=[G_{0,0}]$ with a horizontal lift $\{G_{r,s}\}_{r, s \in (-\e, \e)}$ in $\mathcal{M}_{-1}(S)$ for $\e>0$. 
For the tangent vectors $P:=(\p_r G_{r,0})|_{r=0}$ and  $Q:=(\p_s G_{0,s})|_{s=0}$, the Hessian of the energy functional $\mathcal{E}_{\mathcal{C}}: \mathcal{T}(S)\to \mathbb{R}$ at $[G_0]$ is given by
\begin{align*}
&\Hess_{\Ec_\Cc}(P,Q)_{[G_0]}=\frac{\p^2}{\p r\p s}\mathcal{E}_{\mathcal{C}}([G_{r,s}])\Big{|}_{r=s=0}\\
&=\sum_{e\in E}m_E(e)\int_0^1\Big{\{}-\frac{1}{4}(\Delta_{G_0}-2)^{-1}\langle P, Q\rangle_{G_0}
+\langle V_e^{\perp}, W_e^{\perp}\rangle_{G_0}+\langle \mathcal{V}_e, \mathcal{W}_e\rangle_{G_0}\Big{\}}\cdot \|T_e\|^2_{G_0}dt,
\end{align*}
where $h_{0}:X \to (S, G_{0})$ is any smooth harmonic map, $T_e(t):=\p_th_{0, e}(t)$, $V_e(t):=\p_r h_{r, e}(t)$ for the family of harmonic maps $h_r: X\to (S, G_{r,0})$, $\Vc_e(t)$ are defined by \eqref{VW} relative to the variable $r$, and $W_e(t)$, 
$\mathcal{W}_e(t)$ are defined analogously relative to the variable $s$.
In the integral, $\langle P, Q\rangle_{G_0}$ stands for $G_0(P, Q)$.
\end{theorem}

One can check this formula by using \eqref{svff}, and thus, we omit the proof. 
Now, we give a proof of strictly convexity of the energy functional $\mathcal{E}_{\mathcal{C}}$.

\proof[Proof of Theorem \ref{Thm:convex}]
We show that
\begin{align}\label{Lem: Wolf}
-\frac{1}{4}(\Delta_{G_0}-2)^{-1}\|Q\|^2_{G_0}\geq \frac{1}{24}\|Q\|^2_{G_0}\geq 0.
\end{align}
Indeed, setting  $u:=(\Delta_{G_0}-2)^{-1}\|Q\|_{G_0}^2+\frac{1}{6}\|Q\|_{G_0}^2$, we see
\[
(\Delta_{G_0}-2)u=\|Q\|_{G_0}^2+\frac{1}{6}(\Delta_{G_0}-2)\|Q\|_{G_0}^2=\frac{1}{6}(\Delta_{G_0}+4)\|Q\|_{G_0}^2\geq 0,
\]
where the last inequality follows from Lemma \ref{Lem:Qholomorphic} (ii). 
Suppose ${\rm max}_{p\in S}\,u(p)>0$. Then, at any maximum point $p_0$ of $u$, we have $\Delta_{G_0} u(p_0)<0$ and $-2u(p_0)<0$, which leads $(\Delta_{G_0}-2)u(p_0)<0$, a contradiction. 
Hence $u\leq 0$ on $S$, as required.

Combining \eqref{Lem: Wolf} with \eqref{svff}, we have
\begin{align}\label{hesq}
\Hess_{\Ec_{\Cc}}(Q,Q)_{G_0}\geq \sum_{e\in E}m_E(e)\int_0^1\Big\{\frac{1}{24}\|Q\|^2_{G_0}+\|V_e^{\perp}\|_{G_0}^2+\|\mathcal{V}_e\|_{G_0}^2\Big\}\cdot \|T_e\|_{G_0}^2\, dt\geq 0
\end{align}
for any Weil-Petersson geodesic $\{[G_s]\}_{s \in (-\e, \e)}$ in $\mathcal{T}(S)$, and 
thus $\mathcal{E}_{\mathcal{C}}$ is convex.

Suppose there exists $Q$ such that $\Hess_{\Ec_{\Cc}}(Q,Q)_{G_0}=0$.  Then \eqref{hesq} implies 
\begin{align}\label{hol}
Q(h_{0, e}(t))\equiv 0
\end{align}
for any edge $e\in E$ with $\|T_e\|_{G_0}\neq 0$. For such an edge $e\in E$, we take a point $p$ on the image of $h_e$ and an isothermal coordinate $z=x^1+ix^2$ around $p$.  Recall that, by Lemma \ref{Lem:Qholomorphic}, the function
$Q_{11}-\sqrt{-1}Q_{12}$ is a holomorphic function for $Q\in T_{[G]}\mathcal{T}(S)$. Since  the zeros of a holomorphic function are isolated, \eqref{hol} implies $Q\equiv 0$ on an open subset around the point $p$. 
Then, it turns out that $Q\equiv 0$ on the whole $S$ because $S$ is a (compact) connected surface. 
Therefore for any nonzero $Q\in T_{[G]}\mathcal{T}(S)$, we have $\Hess_{\Ec_{\Cc}}(Q,Q)_{G_0}>0$, and
thus $\mathcal{E}_{\mathcal{C}}$ is strictly convex.
\qed

\begin{remark}
In fact, the above proof shows that in the setting of Theorem \ref{Thm:convex},
the Hessian of the energy functional $\mathcal{E}_{\mathcal{C}}: \mathcal{T}(S)\to \mathbb{R}$ is positive definite and non-degenerate at any point $[G]\in \mathcal{T}(S)$.
\end{remark}

\subsection{Proof of Proposition \ref{key2}} 
Let $\{[G_s]\}_{s \in (-\e, \e)}$ be a smooth curve in $\Tc(S)$, and  $h_{s}$ the family of harmonic maps associated with its horizontal lift $\{G_s\}_{s \in (-\e, \e)}$ in $\Mc_{-1}(S)$.
For the simplicity of notation, let us write
\begin{align*}
T_e^{s}(t):=\p_t h_{s, e}(t), \quad \text{and} \quad V_e^{s}(t):=\p_s h_{s, e}(t).
\end{align*}
for each edge $e\in E$. Moreover, we simply write $T_e(t):=T_e^{0}(t)$ at $s=0$. 
Since $h_{s}: X\to (S, G_{s})$ is a family of harmonic maps, we may assume the following:  
\begin{itemize}
\item $h_{s, e}(t)$ is a constant speed geodesic in $(S, G_{s})$ for each $e\in E$, namely,  we have 
\begin{align}\label{as21}
\nabla^{s}_{T_e^{s}}T_e^{s}=0
\end{align}
along $h_{s, e}(t)$ for any $s \in (-\epsilon, \epsilon)$, where $\nabla^{s}$ is the Levi-Civita connection of $(S, G_{s})$. 
We simply write $\nabla=\nabla^0$.
The parameter $t$ is proportional to the arc-length parameter of $h_{s, e}(t)$, i.e., the norm $\|T_e^{s}(t)\|_{G_{s}}$ is constant along $h_{s, e}(t)$ in $t \in [0, 1]$ for each $e\in E$ and each $s \in (-\epsilon, \epsilon)$.
\item The balanced condition holds for the map $h_{s}: X\to (S, G_{s})$,
\begin{align}\label{as22}
\sum_{e\in E_x}m_E(e) T_e^{s}(0)=0
\end{align}
for any $x\in V$ and all $s\in (-\epsilon, \epsilon)$.
\end{itemize}

 The second variation formula \eqref{Eq:secondVF} in Appendix implies that 
  \begin{align}\label{hf1}
\frac{d^2}{d s^2} E_{G_0}(h_{s})\Big{|}_{s=0}&=\sum_{e\in E}m_E(e) \int_0^1 \Big\{\|\nabla_{T_e}V_e\|_{G_0}^2 -G_0(R(V_e, T_e)T_e, V_e)\Big\}\,dt
\end{align}
 for the harmonic map $h_{0}: X\to (S,G_0)$, where $R$ is the curvature tensor of $(S, G_0)$.
Then, the right hand side becomes
\begin{align}\label{hf2}
&\sum_{e\in E}m_E(e)\int_0^1\Big{\{}-G_0( \nabla_{T_e}\nabla_{T_e}V_e, V_e)-G_0(R(V_e, T_e)T_e, V_e)\Big{\}}\,dt\nonumber\\
&\qquad \qquad \qquad \qquad \qquad \qquad \qquad \qquad \qquad+\sum_{e\in E}m_E(e)\Big[G_0(\nabla_{T_e}V_e, V_e)\Big]_{t=0}^{t=1}.
\end{align}
The following lemma is the place where we use \eqref{as22} that the balanced condition holds for all $s \in (-\e, \e)$.

\begin{lemma} \label{le2}
Letting $T_e^{s}(t):=\p_t h_{s, e}(t)$ and $V_e^{s}(t):=\p_s h_{s, e}(t)$,
if we have
\begin{equation}\label{Eq:BCall-s}
\sum_{e \in E_x}m_E(e)T_e^s(0)=0 \quad \text{for each $x \in V$ for all $s \in (-\e, \e)$},
\end{equation}
then 
$\nabla_{V_x}\cdot \(\sum_{e \in E_x}m_E(e)T_e\)=0$ at each $x \in V$.
In particular, it holds that
\begin{align*}
\sum_{e\in E}m_E(e)\Big[G_0(\nabla_{T_e}V_e, V_e)\Big]_{t=0}^{t=1}=0.
\end{align*}
\end{lemma}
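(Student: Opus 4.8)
The plan is to differentiate the balanced condition \eqref{Eq:BCall-s} in the variable $s$ and evaluate at $s=0$, then interpret the resulting identity geometrically. First I would fix a vertex $x\in V$ and regard the vectors $T_e^s(0)=\p_t h_{s,e}(0)$ as a family of tangent vectors along the curve $s\mapsto h_s(x)$, whose velocity at $s=0$ is precisely $V_x:=V_e(0)$ (this is the same for all $e\in E_x$ since $h_s$ is continuous at the vertex, so $h_{s,e}(0)=h_s(x)$ independent of $e$). Since \eqref{Eq:BCall-s} asserts that the $G_s$-dependent sum $\sum_{e\in E_x}m_E(e)T_e^s(0)$ vanishes identically in $s$, I would apply the covariant derivative $\nabla_{V_x}$ (the Levi-Civita connection of $G_0$ pulled back along $s\mapsto h_s(x)$) to this identity. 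The main subtlety is that the connection varies with $s$; however, since we evaluate at $s=0$ and the vector field vanishes identically, covariant differentiation of the zero section gives $\nabla_{V_x}\!\big(\sum_{e\in E_x}m_E(e)T_e\big)=0$ regardless of which connection's covariant derivative we take along the curve. This yields the first claimed identity.

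Next I would translate this into the boundary-term statement. Writing $\nabla_{V_x} T_e(0)=\nabla_{T_e}V_e(0)$ — which holds because $[V_e,T_e]=0$ as $V$ and $T$ are coordinate vector fields of the two-parameter map $(s,t)\mapsto h_{s,e}(t)$, so the connection is symmetric and $\nabla_{V_e}T_e=\nabla_{T_e}V_e$ — the vanishing derivative of the balanced sum at $t=0$ becomes
\[
\sum_{e\in E_x}m_E(e)\,\nabla_{T_e}V_e(0)=0 \quad\text{for each }x\in V.
\]
Taking the $G_0$-inner product of this with the common vertex-value $V_x=V_e(0)$ (again identical for all $e\in E_x$) shows that the contribution of the lower endpoints $t=0$ to the boundary sum vanishes at each vertex.

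Finally I would assemble the full boundary term. The quantity $\sum_{e\in E}m_E(e)\big[G_0(\nabla_{T_e}V_e,V_e)\big]_{t=0}^{t=1}$ is a sum over oriented edges; grouping the endpoints by the vertices they touch and using the convention that $E$ contains both $e$ and $\bar e$ with $e_1=\bar e_0$, every boundary point of every edge is an emanating point of the correspondingly oriented edge at some vertex. Regrouping the sum vertex by vertex over $E_x$ and using the endpoint identity just established shows each vertex contributes $G_0\!\big(\sum_{e\in E_x}m_E(e)\nabla_{T_e}V_e(0),\,V_x\big)=0$, whence the total boundary term vanishes. The step I expect to be the main obstacle is the bookkeeping in this last regrouping: one must be careful that reversing an edge sends $T_{\bar e}(0)=-T_e(1)$ and $V_{\bar e}(0)=V_e(1)$, so that the symmetric weight $m_E(\bar e)=m_E(e)$ correctly redistributes each $t=1$ endpoint of $e$ as a $t=0$ endpoint of $\bar e$, allowing all boundary contributions to be written uniformly as emanating-edge sums to which the differentiated balanced condition applies.
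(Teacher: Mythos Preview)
Your proposal is correct and follows essentially the same approach as the paper: differentiate the balanced condition \eqref{Eq:BCall-s} in $s$, use $[V_e,T_e]=0$ to rewrite $\nabla_{V_x}T_e=\nabla_{T_e}V_e$, then convert each $t=1$ endpoint into a $t=0$ endpoint of the reversed edge via $T_{\bar e}(0)=-T_e(1)$, $V_{\bar e}(0)=V_e(1)$, $m_E(\bar e)=m_E(e)$, and finally group by vertices to apply the differentiated balanced condition. The paper makes the edge-reversal bookkeeping slightly more explicit by first recording $G_0(V_e,\nabla_{T_e}V_e)|_{t=1}=-G_0(V_{\bar e},\nabla_{T_{\bar e}}V_{\bar e})|_{t=0}$ and hence rewriting the full boundary sum as $-2\sum_{e\in E}m_E(e)G_0(V_e,\nabla_{T_e}V_e)|_{t=0}$ before grouping by vertices, but this is exactly the computation your final paragraph is describing.
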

\begin{proof}
For each vertex $x$, differentiating \eqref{Eq:BCall-s} in $s \in (-\e, \e)$, we have
\[
\sum_{e \in E_x}m_E(e)\nabla_{V_x}T_e=\nabla_{V_x}\cdot \(\sum_{e \in E_x}m_E(e)T_e\)=0.
\]
Denote the origin and the terminal of edge $e\in E$ by $o(e)$ and $t(e)$, respectively. 
Since
\[
G_0(V_e, \nabla_{T_e}V_e)_{h_0(t(e))}=-G_0(V_{\overline{e}}, \nabla_{T_{\overline{e}}}V_{\overline{e}})_{h_0(o(\overline{e}))}
\quad \text{for $T_{\overline{e}}=-T_e$ and $m_E(\overline{e})=m_E(e)$},
\]
we have that
\begin{align*}\label{e1}
\sum_{e\in E}m_E(e)\Big{[}G_0(V_e, \nabla_{T_e}V_e)\Big]_{t=0}^{t=1}
=-2\sum_{e\in E}m_E(e)G_0(V_e, \nabla_{T_e}V_e)_{h_0(o(e))}.
\end{align*}
Note that $[V_e, T_e]=0$ and thus $\nabla_{T_e}V_e=\nabla_{V_e}T_e$.
For each vertex $x$, letting $V_x=V_e$ for $e \in E_x$ we have
\[
-2\sum_{x \in V}\sum_{e \in E_x}m_E(e)G_0(V_e, \nabla_{T_e}V_e)_{h_0(x)}
=-2\sum_{x \in V}G_0(V_x, \sum_{e \in E_x}m_E(e)\nabla_{V_x}T_e)_{h_0(x)}=0,
\]
as required.
\end{proof}

Recall that, for the velocity vector $Q:=(\p_sG_s)|_{s=0}$, 
we have defined the vector field $Q_e^{\sharp}$ along $h_{0, e}(t)$ such that $Q(T_e, \cdot)=G_0(Q_e^{\sharp},\cdot)$ (Definition \ref{Def:V}). 
The following formula uses the property \eqref{as21} that $h_{s, e}(t)$ is a constant speed geodesic in $(S, G_s)$ for each $e \in E$ for all $s \in (-\e, \e)$.

\begin{lemma}\label{le3}
For any horizontal lift $\{G_s\}_{s \in (-\e, \e)}$ of any smooth curve $\{[G_s]\}_{s \in (-\e, \e)}$ for $\e>0$ in $\Tc(S)$, we have
\begin{align*}
-\nabla_{T_e}\nabla_{T_e}V_e-R(V_e, T_e)T_e=\frac{1}{2}\nabla_{T_e}Q_{e}^{\sharp} \qquad \text{for $e \in E$}.
\end{align*}
\end{lemma}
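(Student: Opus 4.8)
The plan is to obtain the identity by differentiating the geodesic equation \eqref{as21}, $\nabla^s_{T_e^s}T_e^s=0$, in the parameter $s$ at $s=0$, and reading off the inhomogeneous term produced by the $s$-dependence of the connection. Since both $\nabla^s$ and $\nabla=\nabla^0$ are torsion-free, their difference is a symmetric $(1,2)$-tensor $A^s$, i.e. $\nabla^s_XY=\nabla_XY+A^s(X,Y)$ with $A^0\equiv 0$; writing $\dot A:=\frac{d}{ds}\big|_{s=0}A^s$, the standard linearization of the Christoffel symbols gives $G_0(\dot A(X,Y),Z)=\tfrac12\big[(\nabla_XQ)(Y,Z)+(\nabla_YQ)(X,Z)-(\nabla_ZQ)(X,Y)\big]$. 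Rewriting the geodesic equation along $h_{s,e}$ as $\frac{D}{\partial t}T_e^s+A^s(T_e^s,T_e^s)=0$, where $\frac{D}{\partial t}$ is the covariant derivative along $t$ relative to $\nabla$, I would apply $\frac{D}{\partial s}$ at $s=0$. Using the symmetry $\frac{D}{\partial s}T_e=\frac{D}{\partial t}V_e$ and the curvature commutation $\frac{D}{\partial s}\frac{D}{\partial t}T_e=\frac{D}{\partial t}\frac{D}{\partial t}V_e+R(V_e,T_e)T_e$, the first summand yields the left-hand side of the claim; in the second summand every term carrying a factor of $A^0$ or its covariant derivative vanishes because $A^0\equiv 0$, leaving only $\dot A(T_e,T_e)$. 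Thus I arrive at $\frac{D}{\partial t}\frac{D}{\partial t}V_e+R(V_e,T_e)T_e+\dot A(T_e,T_e)=0$, and the whole problem reduces to identifying $\dot A(T_e,T_e)$ with $\tfrac12\nabla_{T_e}Q_e^{\sharp}$.

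For that identification I would evaluate the linearization formula at $X=Y=T_e$ to get $G_0(\dot A(T_e,T_e),Z)=(\nabla_{T_e}Q)(T_e,Z)-\tfrac12(\nabla_ZQ)(T_e,T_e)$. Separately, differentiating the defining relation $G_0(Q_e^{\sharp},Z)=Q(T_e,Z)$ along the $G_0$-geodesic $h_{0,e}$ and using $\frac{D}{\partial t}T_e=0$ from \eqref{as21} (the $\frac{D}{\partial t}Z$ terms cancel) shows that $G_0(\nabla_{T_e}Q_e^{\sharp},Z)=(\nabla_{T_e}Q)(T_e,Z)$. Comparing the two, the desired identity $\dot A(T_e,T_e)=\tfrac12\nabla_{T_e}Q_e^{\sharp}$ becomes equivalent to the symmetry $(\nabla_{T_e}Q)(T_e,Z)=(\nabla_ZQ)(T_e,T_e)$.

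The crux — and the step where the hypothesis that $\{G_s\}$ is a \emph{horizontal} lift is essential — is establishing this symmetry, namely that the $(0,3)$-tensor $\nabla Q$ is totally symmetric. It is automatically symmetric in its last two slots since $Q$ is symmetric, so only the Codazzi-type relation $(\nabla_XQ)(Y,Z)=(\nabla_YQ)(X,Z)$ remains. Because a horizontal lift has velocity $Q$ with ${\rm tr}_{G_0}Q=0$ and $\delta_{G_0}Q=0$, Lemma \ref{Lem:Qholomorphic} applies: in any isothermal chart the Cauchy--Riemann equations $Q_{11,1}=-Q_{12,2}$ and $Q_{11,2}=Q_{12,1}$ hold, and a direct check against the Christoffel symbols listed in the proof of Lemma \ref{Lem:Qholomorphic} turns these two equations into precisely the two independent instances $\nabla_1Q_{2k}=\nabla_2Q_{1k}$, $k=1,2$, of the Codazzi relation. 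Granting total symmetry, $(\nabla_ZQ)(T_e,T_e)=(\nabla_{T_e}Q)(T_e,Z)$, so $G_0(\dot A(T_e,T_e),Z)=\tfrac12(\nabla_{T_e}Q)(T_e,Z)=\tfrac12 G_0(\nabla_{T_e}Q_e^{\sharp},Z)$ for all $Z$, whence the lemma follows. I expect the main obstacle to be the clean bookkeeping of the $s$-dependence of the connection (so that exactly $\dot A$ survives) together with the recognition that the transverse--traceless condition is precisely what supplies the total symmetry needed to collapse the two terms of the linearized Christoffel formula into the single factor $\tfrac12\nabla_{T_e}Q_e^{\sharp}$.
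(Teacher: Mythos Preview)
Your proof is correct and follows essentially the same route as the paper: differentiate the geodesic equation \eqref{as21} in $s$, isolate the Jacobi operator part $\nabla_{T_e}\nabla_{T_e}V_e+R(V_e,T_e)T_e$, and use the Cauchy--Riemann equations from Lemma~\ref{Lem:Qholomorphic} to simplify the connection-variation term to $\tfrac12\nabla_{T_e}Q_e^{\sharp}$. The paper carries this out by picking, at each point $h_{0,e}(t)$, an isothermal chart in which the Christoffel symbols vanish, so that the symmetry $(\partial_s G_s)_{ij,k}|_{s=0}=(\partial_s G_s)_{ik,j}|_{s=0}$ is literally the partial-derivative form of the CR equations; you instead package the same computation invariantly via the difference tensor $A^s=\nabla^s-\nabla$ and its linearization $\dot A$, reducing the identification to the Codazzi relation $(\nabla_XQ)(Y,Z)=(\nabla_YQ)(X,Z)$, which you then verify from the CR equations in an arbitrary isothermal chart. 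Since at a point where the Christoffel symbols vanish one has $\nabla_kQ_{ij}=Q_{ij,k}$, the two symmetry statements are the same, and the two arguments are equivalent; your version has the mild advantage of being manifestly tensorial and not requiring a special chart at each point.
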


\begin{proof}
We shall consider the differential of  \eqref{as21} at $s=0$.  
For any fixed $t \in [0, 1]$, we may take a local isothermal coordinate $(x^1,x^2)$ around $h_{0,e}(t)$ satisfying  $G_0(h_{0,e}(t))=4((dx^1)^2+(dx^2)^2)$ and $\Gamma_{\beta\gamma}^{\alpha}(G_0, h_{0, e}(t))=0$ at the fixed point $h_{0,e}(t)$. Indeed, since $(S,G_0)$ is a hyperbolic surface,  the fixed point $h_{0,e}(t)$ can be regarded as the origin of the Poincar\'e disk, and the standard coordinate $(x^1,x^2)$ of the Poincar\'e disk gives rise to the desired local coordinate.  

Let
\[
T_e^s(t)=\p_t h_{s, e}(t)=\tau^\a_s(t)\frac{\p}{\p x^\a}\Big{|}_{h_{0, e}(t)} \qquad \text{for $\a=1, 2$}.
\]
Then, we have for $\alpha=1,2$,
\begin{align}\label{c4}
&\frac{\p}{\p s}\Big{|}_{s=0}(\nabla^s_{T_e^s}T_e^s)^{\alpha}
=\frac{\p}{\p s}\Big{|}_{s=0}\Big{(}\p_t \t_s^{\alpha}+\t_s^{\beta}\t_s^{\gamma}\Gamma_{\beta\gamma}^{\alpha}(G_s, h_s)\Big{)}\nonumber\\
&=\frac{\p}{\p s}\Big{|}_{s=0}\Big{(}\p_t \t_s^{\alpha}\Big{)}+\Big(\frac{\p}{\p s}\Big{|}_{s=0}\t_s^{\beta}\t_s^{\gamma}\Big)\Gamma_{\beta\gamma}^{\alpha}(G_0, h_0)\nonumber\\
&\qquad \qquad \qquad \qquad \qquad +\t_0^{\beta}\t_0^{\gamma}\Big(\frac{\p}{\p s}\Big{|}_{s=0}\Gamma_{\beta\gamma}^{\alpha}(G_s, h_0)+\frac{\p}{\p s}\Big{|}_{s=0}\Gamma_{\beta\gamma}^{\alpha}(G_0, h_s)\Big)\nonumber\\
&=\frac{\p}{\p s}\Big{|}_{s=0}\Big(\p_t \t_s^{\alpha}+\t_s^{\beta}\t_s^{\gamma}\Gamma_{\beta\gamma}^{\alpha}(G_0,h_s)\Big)+\frac{\p}{\p s}\Big{|}_{s=0}\Big(\p_t \t_0^{\alpha}+\t_0^{\beta}\t_0^{\gamma}\Gamma_{\beta\gamma}^{\alpha}(G_s, h_0)\Big)\nonumber\\
&=\frac{\p}{\p s}\Big{|}_{s=0}(\nabla^0_{T_e^s}T_e^s)^{\alpha}+\frac{\p}{\p s}\Big{|}_{s=0}(\nabla^s_{T_e}T_e)^{\alpha},
\end{align}
where we have added $\frac{\p}{\p s}\Big{|}_{s=0}(\p_t \t_0^{\alpha})=0$ in the third equality, $\Gamma_{\beta\gamma}^{\alpha}(G_s, h_s)$ denotes the Christoffel symbol for the metric $G_s$ at the point $h_{s, e}(t)$ (and we use the Einstein convention for summations over $\b, \g=1, 2$).  
Since $\Gamma_{\beta\gamma}^{\alpha}(G_0, h_{0, e}(t))=0$, we have
\[
\frac{\p}{\p s}\Big{|}_{s=0}(\nabla^0_{T_e^s}T_e^s)^{\alpha}=(\nabla_{V_e^s}\nabla_{T_e^s}T_e^s)^\a|_{s=0},
\]
and since $[V_e^s, T_e^s]=0$,
\begin{align*}
(\nabla_{V_e^s}\nabla_{T_e^s}T_e^s)|_{s=0}
=\(R(V_e^s, T_e^s)T_e^s+\nabla_{T_e^s}\nabla_{V_e^s}T_e^s\)|_{s=0}
=\(R(V_e^s, T_e^s)T_e^s+\nabla_{T_e^s}\nabla_{T_e^s}V_e^s\)|_{s=0}
\end{align*}
Therefore we obtain for $\a=1, 2$,
\begin{align}\label{c41}
\frac{\p}{\p s}\Big{|}_{s=0}(\nabla^0_{T_e^s}T_e^s)^{\alpha}=\(R(V_e, T_e)T_e+\nabla_{T_e}\nabla_{T_e}V_e\)^\a.
\end{align}

Consider the second term in \eqref{c4}. 
We simply write $\t^i=\t^i_0$ and $\Gamma_{\beta\gamma}^{\alpha}(s)=\Gamma_{\beta\gamma}^{\alpha}(G_s, h_0)$ so that
\begin{align*}
\nabla^s_{T_e}T_e=\Big{\{}\p_t\tau^\a+ \tau^\b \tau^\g\Gamma_{\b \g}^\a(s)\Big{\}}\frac{\p}{\p x^\a}\Big{|}_{h_{0, e}(t)}
\end{align*}
Since $\nabla_{T_e}T_e$ and $\Gamma_{\b \g}^\a(0)=0$ at $t$, we have
$\p_t\tau^\a(t)=0$ for $\a=1,2$.
Concerning the differential of Christoffel symbol $\Gamma_{ij}^k(s)$ with respect to $s$, we have
\begin{align*}
\frac{\p}{\p s}\Gamma_{ij}^k(s)\Big{|}_{s=0}&=\frac{\p}{\p s}\frac{1}{2}(G_s)^{km}\{(G_s)_{im,j}+(G_s)_{jm,i}-(G_s)_{ij,m}\}\Big{|}_{s=0}\\
&=\frac{1}{8}\{(\p_s G_s)_{ik,j}+(\p_s G_s)_{jk,i}-(\p_s G_s)_{ij,k}\}\Big{|}_{s=0}\nonumber
\end{align*}
at the point $h_{0, e}(t)$ since $(G_0)^{km}(t)=(1/4)\delta_{km}$ and $(G_0)_{ij,k}(t)=0$ for any $i,j,k=1, 2$.
Recall that $\{G_s\}_{s \in (-\e, \e)}$ is a horizontal lift of a curve $\{[G_s]\}_{s \in (-\e, \e)}$ in $\mathcal{T}(S)$;
Lemma \ref{Lem:Qholomorphic} implies that $(\p_s G_s)_{11}-\sqrt{-1}(\p_s G_s)_{12}$ satisfies the Cauchy-Riemann equation around $h_{0, e}(t)$ and that
 $(\p_s G_s)_{ij,k}|_{s=0}$
 is symmetric under permuting any indices $i, j, k \in \{1, 2\}$. 
Hence we have
\begin{align*}
\frac{\p}{\p s}\Gamma_{ij}^k(s)\Big{|}_{s=0}
=\frac{1}{8}\frac{\p}{\p s}({G}_s)_{ik,j}\Big{|}_{s=0} \quad \text{at $h_{0, e}(t)$}.
\end{align*}
Therefore we see that
\begin{align*}
\frac{\p}{\p s}\Big{|}_{s=0}(\nabla^s_{T_e}T_e)^{\alpha}
&=\frac{\p}{\p s}\Big{\{}\p_t\tau^\a+\tau^\b \tau^\g\Gamma_{\b \g}^\a(s)\Big{\}}\Big{|}_{s=0}\\
&=\tau^\b \tau^\g \Big(\frac{\p}{\p s}\Gamma_{\b \g}^\a(s)\Big)\Big{|}_{s=0}
=\frac{1}{8}\tau^\b \tau^\g\frac{\p}{\p s}({G}_s)_{\b\a, \g}\Big{|}_{s=0}\nonumber,
\end{align*}
and thus
\begin{align}\label{c42}
\frac{\p}{\p s}\Big{|}_{s=0}(\nabla^s_{T_e}T_e)^{\alpha}
&=\frac{1}{8}\frac{\p}{\p t}\Big\{\frac{\p}{\p s}{G}_s\Big{|}_{s=0}\(T_e, \frac{\p}{\p x^\a}\)\Big\}=\frac{1}{8}\frac{\p}{\p t}Q_e\(T_e, \frac{\p}{\p x^\a}\)\nonumber\\
&=\frac{1}{8}\frac{\p}{\p t}G_0\(Q_e^{\sharp}, \frac{\p}{\p x^\a}\)=\frac{1}{8}G_0\(\nabla_{T_e} Q_e^{\sharp}, \frac{\p}{\p x^\a}\)=\frac{1}{2}(\nabla_{T_e}Q_e^{\sharp})^{\alpha}
\end{align}
where $\(\nabla_{T_e}\frac{\p}{\p x^\a}\)(h_{0, e}(t))=0$ and $G_0(h_{0,e}(t))=4((dx^1)^2+(dx^2)^2)$ in this coordinate.

Finally, substituting \eqref{c41} and \eqref{c42} to \eqref{c4}, we obtain for each $t \in [0, 1]$,
\[
\frac{\p}{\p s}\Big{|}_{s=0}(\nabla^s_{T_e^s}T_e^s)=\nabla_{T_e}\nabla_{T_e}V_e+R(V_e, T_e)T_e+\frac{1}{2}\nabla_{T_e}Q_e^{\sharp},
\]
and since we have \eqref{as21}, we obtain the lemma.
\end{proof}

\proof[Proof of Proposition \ref{key2}]

By Lemma \ref{le2} and 
Lemma \ref{le3}, the consequence of the second variation formula \eqref{hf2} becomes
\begin{align*}
&\frac{d^2}{d s^2} E_{G_0}(h_{s})\Big{|}_{s=0}=\sum_{e\in E}m_E(e)\int_0^1 \frac{1}{2}G_0(\nabla_{T_e}Q_{e}^{\sharp},V_e) \,dt\\
&\qquad \qquad= \frac{1}{2}\sum_{e\in E}m_E(e)\int_0^1-G_0(Q_e^{\sharp},\nabla_{T_e}V_e)\,dt+\frac{1}{2}\sum_{e\in E}m_E(e)\Big[Q(T_e,V_e)\Big]_{t=0}^{t=1}. 
\end{align*}
The second term is $0$ since the balanced condition \eqref{as22} implies that
\begin{align*}
\frac{1}{2}\sum_{e\in E}m_E(e)\Big[Q(T_e,V_e)\Big]_{t=0}^{t=1}
&=-\sum_{e\in E}m_E(e)Q({T_e}(0), V_e(0))\\
&=-Q\(\sum_{e\in E}m_E(e)T_e(0), V_e(0)\)=0.
\end{align*}
The integrand of the first term for each $e \in E$ is 
\begin{align}\label{svf3}
-G_0(Q_e^{\sharp}, \nabla_{T_e}V_e)
=\|\nabla_{T_e}V_e\|_{G_0}^2+\frac{\|Q_e^{\sharp}\|_{G_0}^2}{4}-\Big\|\nabla_{T_e}V_e+\frac{Q_e^{\sharp}}{2}\Big\|^2_{G_0}.
\end{align}
Letting $J_0$ be the complex structure on $(S, G_0)$, we have for $e \in E$ with $T_e \neq 0$,
\begin{align*}
\|Q_e^{\sharp}\|_{G_0}^2=\frac{1}{{\|T_e\|_{G_0}^2}}\{{Q(T_e,T_e)^2}+{Q(T_e, J_0 \cdot T_e)^2}\}
=\frac{\|Q\|_{G_0}^2\|T_e\|_{G_0}^2}{2},
\end{align*}
and note that by Definition \ref{Def:V} for $e \in E$ with $T_e \neq 0$,
\[
\mathcal{V}_e=\frac{1}{\|T_e\|_{G_0}}\(\nabla_{T_e}V_e+\frac{Q_e^{\sharp}}{2}\),
\]
and
by \eqref{hf2},
\begin{align*}
\sum_{e\in E}m_E(e)\int_0^1\|\nabla_{T_e}V_e\|_{G_0}^2\, dt
&=\frac{d^2}{ds^2}E_{G_0}(h_s)\Big{|}_{s=0}+\sum_{e\in E}m_E(e)\int_0^1 G_0(R(V_e, T_e)T_e,V_e)\, dt.
\end{align*}
Substituting these to \eqref{svf3}, we obtain
\begin{align*}
&\frac{1}{2}\cdot \frac{d^2}{d s^2} E_{G_0}(h_{s})\Big{|}_{s=0}\\
&=\frac{1}{2}\sum_{e\in E}m_E(e)\int_0^1\Big{\{}G_0(R(V_e, T_e)T_e,V_e)+\frac{\|Q\|_{G_0}^2\|T_e\|_{G_0}^2}{8}-\|\mathcal{V}_e\|_{G_0}^2 \|T_e\|_{G_0}^2\Big{\}}\, dt.
\end{align*}
Since $(S,G_0)$ is a surface of constant sectional curvature $-1$, we have
\begin{align*}
G_0(R(V_e, T_e)T_e,V_e)=-\|V_e^{\perp}\|^2_{G_0}\|T_e\|_{G_0}^2,
\end{align*}
where $V_e^{\perp}$ is the orthonormal projection with respect to $T_e$ for $T_e=0$ and $0$ otherwise,
and we complete the proof of Proposition \ref{key2}.
\qed

\section{Properness of the energy functional}\label{Sec:proper}

We will establish the properness of the energy functional $\Ec_\Cc$ for $\Cc=[f]$ if $f:X \to S$ induces a surjective homomorphism between the fundamental groups.
First we show the following lemma (which will also be used in the next Section \ref{Sec:MCG}).

\begin{lemma}\label{Lem:homotopic}
Assume that $f:X \to S$ is a continuous map which induces a surjective homomorphism from $\pi_1(X)$ to $\pi_1(S)$.
If $\f_1, \f_2$ are orientation-preserving homeomorphisms on $S$ and $\f_1 \circ f$ and $\f_2 \circ f$ are homotopic, 
then $\f_1$ and $\f_2$ are isotopic on $S$.
\end{lemma}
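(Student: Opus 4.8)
The plan is to reduce the statement to a single homeomorphism isotopic to the identity. Since $\f_2$ is a homeomorphism, hence a homotopy equivalence, composing a homotopy between $\f_1 \circ f$ and $\f_2 \circ f$ with $\f_2^{-1}$ yields $\f_2^{-1}\circ \f_1 \circ f \simeq f$. Writing $\psi:=\f_2^{-1}\circ \f_1$, which is again an orientation-preserving homeomorphism, it suffices to prove that $\psi \circ f \simeq f$ forces $\psi$ to be isotopic to the identity; granting this, $\f_1=\f_2\circ \psi$ is isotopic to $\f_2$, since post-composition with $\f_2$ preserves isotopy.

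Next I would pass to fundamental groups, carefully keeping track of basepoints. Fix $x_0 \in X$ and set $p:=f(x_0)$. A free homotopy $H:X\times[0,1]\to S$ from $\psi\circ f$ to $f$ traces a path $\gamma(s):=H(x_0,s)$ from $\psi(p)$ to $p$. For every loop $c$ based at $x_0$, the standard relation between free homotopy and conjugation by the trace path gives $f_\ast[c]=[\gamma]^{-1}\cdot \psi_\ast\!\big(f_\ast[c]\big)\cdot[\gamma]$ in $\pi_1(S,p)$, where the right-hand side is understood as the automorphism $\Psi$ of $\pi_1(S,p)$ obtained by composing $\psi_\ast:\pi_1(S,p)\to\pi_1(S,\psi(p))$ with the change-of-basepoint isomorphism along $\gamma$. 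Because $f_\ast:\pi_1(X,x_0)\to\pi_1(S,p)$ is surjective by hypothesis, this identity holds on all of $\pi_1(S,p)$, so $\Psi=\mathrm{id}$. Equivalently, $\psi_\ast$ agrees with a change-of-basepoint isomorphism, which means precisely that the outer automorphism $[\psi_\ast]\in\mathrm{Out}(\pi_1(S))$ induced by $\psi$ is trivial.

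Finally I would invoke the Dehn--Nielsen--Baer theorem: for a closed orientable surface of genus at least two the natural homomorphism $\Mod(S)\to \mathrm{Out}(\pi_1(S))$ is injective (indeed an isomorphism onto the orientation-preserving outer automorphisms); see \cite[Chapter 8]{FarbMargalit}. Since $\psi$ is orientation-preserving and induces the trivial outer automorphism, it represents the trivial mapping class, i.e.\ $\psi$ is isotopic to the identity, which completes the reduction.

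The main obstacle I anticipate is the basepoint bookkeeping in the middle step: one must verify that the trace path $\gamma$ of the free homotopy conjugates $\psi_\ast(f_\ast[c])$ back to $f_\ast[c]$ with the correct orientation and composition conventions, and then phrase the conclusion as the vanishing of the induced outer automorphism so that the Dehn--Nielsen--Baer injectivity applies verbatim. Once that translation is in place, surjectivity of $f_\ast$ does all the work and the remaining input is entirely classical.
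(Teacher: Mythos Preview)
Your argument is correct and follows essentially the same line as the paper: reduce to $\psi=\f_2^{-1}\circ\f_1$ with $\psi\circ f\simeq f$, use the trace path of the homotopy together with surjectivity of $f_\ast$ to see that $\psi_\ast$ is the change-of-basepoint isomorphism (hence inner), and conclude that $\psi$ is isotopic to the identity. The only difference is in the final packaging: the paper realizes the change-of-basepoint by an auxiliary homeomorphism $\beta_\gamma\simeq\id$, then appeals to the $K(\pi,1)$ property (Hatcher, Proposition~1B.9) to get $\beta_\gamma\circ\psi\simeq\id$ and to Epstein's theorem to upgrade homotopy to isotopy, whereas you invoke the injectivity of $\Mod(S)\to\Out(\pi_1(S))$ from Dehn--Nielsen--Baer directly; these are equivalent inputs, and your route is slightly more economical.
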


\proof
Let $\wh \f:=\f_2^{-1}\circ \f_1$,
then
the maps $\wh \f \circ f$ and $f$ are homotopic.
For any $x_0 \in X$, let $\g$ be the path from $f(x_0)$ to $\wh \f \circ f(x_0)$ given by the homotopy between $\wh \f \circ f$ and $f$.
Then, there exists a homeomorphism $\b_\g$ homotopic to the identity map $\id$ on $S$ such that
\[
\b_{\g \ast}:\pi_1(S, \wh \f \circ f(x_0)) \to \pi_1(S, f(x_0)), \qquad \b_{\g \ast}([\a])=[\g \cdot \a \cdot \wbar \g],
\]
for $[\a] \in \pi_1(S, \wh \f \circ f(x_0))$, where $\wbar \g$ denotes the reversed path of $\g$. 
(Such an $\b_\g$ is obtained first when $\g$ is a short enough path on $S$, and in general divide $\g$ into finitely many short enough paths and composite homeomorphisms homotopic to the identity.)
Since $f$ induces a surjective homomorphism $f_\ast: \pi_1(X, x_0) \to \pi_1(S, f(x_0))$,
one obtains
$\b_{\g \ast} \circ \wh \f_\ast: \pi_1(S, f(x_0)) \to \pi_1(S, f(x_0))$, 
\[
[f(c)] \mapsto [\g \cdot \wh \f (f(c)) \cdot \wbar \g] \quad \text{for $[c] \in \pi_1(X, x_0)$},
\]
and since $\wh \f \circ f$ and $f$ are homotopic,
$\b_{\g \ast} \circ \wh \f_\ast$ coincides with $\id_\ast$ given by $\id$ on $S$.
Since $S$ is a $K(\pi_1(S), 1)$-space, $\b_\g \circ \widehat \f$ and $\id$ are homotopic fixing $f(x_0)$ \cite[Proposition 1B.9]{Hatcher}, and since $\b_\g$ is homotopic to $\id$,
$\widehat \f$ is homotopic to $\id$; hence
$\f_1$ and $\f_2$ are homotopic.
It is known that two orientation-preserving homeomorphisms which are homotopic on a closed Riemann surface $S$ are isotopic \cite[Theorem 6.3]{Epstein-isotopies}, therefore $\f_1$ and $\f_2$ are in fact isotopic on $S$.
\qed

\def\Sub{{\rm Sub}}

\proof[Proof of Theorem \ref{Thm:proper}]
For any $R \ge 0$, let us denote the sublevel set by
\[
\Sub(R):=\Big{\{}[(\SS, G_\SS), \f] \in \Tc(S) \ : \ \Ec_\Cc([G]) \le R, \, G=\f^\ast G_\SS\Big{\}}.
\]
Assume that $\Sub(R) \neq \emptyset$.
Taking a harmonic map $h_G: X \to (S, G)$ in the homotopy class $\Cc=[f]$, we write
\[
\Ec_\Cc([G])=E_G(h_G)=\frac{1}{2}\sum_{e \in E}m_E(e)\int_0^1\|\partial_t h_{G, e}\|_G^2\, dt.
\]
Let $l(h_{G, e})$ be the length of $h_{G, e}:[0, 1] \to (S, G)$ for each $e \in E$.
Since $h_G$ is harmonic and piecewise geodesic,  
we have for each $e \in E$,
\[
l(h_{G, e})=\int_0^1\|\partial_t h_{G, e}\|_G\, dt=\|\partial_t h_{G, e}\|_G.
\]
The Cauchy-Schwarz inequality gives 
\begin{align*}
\sum_{e \in E}l(h_{G, e})
&= \sum_{e \in E}\(\int_0^1 \|\partial_t h_{G, e}\|_G^2 \, dt\)^{1/2} \\
&\le \(\sum_{e \in E}\frac{1}{m_E(e)}\)^{1/2}\cdot \(\sum_{e \in E}m_E(e)\int_0^1 \|\partial_t h_{G, e}\|_G^2 \, dt\)^{1/2}.
\end{align*}
Hence letting $M:=\sum_{e \in E}m_E(e)^{-1}$, we obtain for any $[G] \in \Sub(R)$,
\begin{equation}\label{Eq:length}
\sum_{e \in E}l(h_{G, e}) \le \(M\cdot 2\Ec_{\Cc}([G])\)^{1/2} \le (2 MR)^{1/2}.
\end{equation}

For given closed Riemann surface $S$, we take a finite collection of simple 
closed curves $\{\g_1, \dots, \g_N\}$ on $S$ such that the union $\g_1 \cup \cdots \cup \g_N$ fills $S$, i.e., the complement of this union is a disjoint union of disks.
(For example, one may take simple closed curves corresponding to the free homotopy classes of the standard set of generators of $\pi_1(S)$, where $N$ is twice the genus of $S$.) 
Then, if $\g$ is any simple closed curve which is not homotopic to a point, then there exists a $\g_i$ such that the geometric intersection number between free homotopy classes of $\g$ and $\g_i$ is not zero.

Since $f_\ast: \pi_1(X, x_0) \to \pi_1(S, f(x_0))$ is surjective, we choose a collection of closed paths $\{c_1, \dots, c_N\}$ in $X$ such that $f(c_i)$ is freely homotopic to $\g_i$ for each $i=1, \dots, N$.
Let $C_{\max}:=\max_{i=1, \dots, N} |c_i|$, where $|c_i|$ is the number of edges in $c_i$.
(One may just take $C_{\max}=1$ if $f$ fills the surface in the following inequality.)
Then, for any $c_i$, we have
\[
\sum_{e \in c_i}l(h_{G, e}) \le C_{\max}\, \sum_{e \in E}l(h_{G, e}) \le C_{\max}\,(2 MR)^{1/2},
\]
for any $[G] \in \Sub(R)$.
For any closed hyperbolic surface $(S, G)$, let $\inj((S, G))$ be the injectivity radius.
Taking a closed geodesic $\g_{\inj}$ in $(S, G)$ such that the length realizes twice the injectivity radius $l(\g_{\inj})=2\, \inj ((S, G))$,
we have a $\g_i$ such that the geometric intersection number is non zero in their free homotopy classes.
Let $\g_{i, G}$ be a (unique) closed geodesic freely homotopic to $\g_i$, 
then $\g_{i, G}$ has the shortest length among the free homotopy class of $\g_i$ since $(S, G)$ is a hyperbolic surface,
and we have
\[
l(\g_{i, G}) \le \sum_{e \in c_i}l(h_{G, e}) \le C_{\max}\,(2 MR)^{1/2}.
\]
Then, the Collar Lemma \cite{Keen}
(e.g., \cite[Lemma 13.6]{FarbMargalit})
implies that
\[
N_{\g_i}=\left\{x \in S \ : \ d(x, \g_i) \le \sinh^{-1}\(\frac{1}{\sinh (l(\g_i)/2)}\)\right\}
\]
is an embedded annulus, and thus
\[
l(\g_{\rm inj}) \ge \sinh^{-1}\(\frac{1}{\sinh (l(\g_i)/2)}\).
\]
Therefore, letting
\[
\e(R):=\frac{1}{2}\,\sinh^{-1}\(\frac{1}{\sinh ((1/2)C_{\max}\,(2 MR)^{1/2})}\)>0,
\]
we obtain
$\inj ((S, G)) \ge \e(R)$ for any $[G] \in \Sub(R)$.
We consider the moduli space $\Mc(S)$ of $S$, and define
\[
\Mc_{\e(R)}(S):=\Big{\{}[(S, G)] \in \Mc(S) \ : \ \inj((S, G))\ge \e(R) \Big{\}}.
\]
Now we are discussing the standard topology in Teichm\"uller space $\Tc(S)$, and the induced topology in the moduli space $\Mc(S)$ as the quotient space of $\Tc(S)$ by the action of the mapping class group $\Mod(S)$ which acts on $\Tc(S)$ properly discontinuously. 
Then, the Mumford-Mahler Compactness criterion
implies that $\Mc_{\e(R)}(S)$ is compact (\cite[Theorem C.1]{TrombaBook});
for any sequence of points $[(\SS_i, G_i), \f_i]$ in $\Tc(S)$ with $\Ec_\Cc([\f_i^\ast G_i]) \le R$, we have $\inj((S, \f_i^\ast G_i)) \ge \e(R)$, and thus up to passing to a subsequence there exists a hyperbolic surface $[(S, G_\star)]$ in $\Mc_{\e(R)}(S)$ such that 
$[(S, \f_i^\ast G_i)]$ converges to $[(S, G_\star)]$ in $\Mc(S)$.
Moreover, there exists a sequence of (orientation-preserving) diffeomorphisms $\wh \f_i$ on $S$ to itself and $\f_\star:=\id:S \to S$ such that $[(\SS_i, G_i), \f_i \circ \wh \f_i^{-1}]$ converges to $[(S, G_\star), \f_\star]$ in the Teichm\"uller space $\Tc(S)$, and
$(\f_i \circ \wh \f_i^{-1})^\ast G_i$ converges to $G_\star$ in the $C^\infty$-topology:
for any $\e>0$, for all large enough $i$, we have
\begin{equation}\label{Eq:dilatation}
G_\star \le (1+\e)(\f_i \circ \wh \f_i^{-1})^\ast G_i.
\end{equation}

Let $h_i:X \to  (S, \f_i^\ast G_i)$ be a harmonic map in the homotopy class $\Cc$.
The maps $\wh \f_i \circ h_i:X \to (S, (\f_i \circ \wh \f_i^{-1})^\ast G_i)$ are harmonic since $\wh \f_i: (S, \f_i^\ast G_i) \to (S, (\f_i \circ \wh \f_i^{-1})^\ast G_i)$ are isometry
(although $\wh \f_i \circ h_i$ may not be homotopic to $h_i$).
We note that $\wh \f_i \circ h_i: X \to (S, G_\star)$ are equicontinuous; this follows from (\ref{Eq:length}) and (\ref{Eq:dilatation}).
Thus by the Ascoli-Arzel\`a theorem after passing to a subsequence if necessary, $\wh \f_i \circ h_i$ converges to a continuous map $h_\star: X \to (S, G_\star)$ uniformly.
Therefore for all large enough $i, j$, the maps $\wh \f_i \circ h_i$ and $\wh \f_j \circ h_j$ are homotopic.

Then, the condition that $\wh \f_i \circ h_i$ and $\wh \f_j \circ h_j$ are in the same homotopy class inducing surjective homomorphisms $\pi_1(X) \to \pi_1(S)$ implies that $\wh \f_i$ and $\wh \f_j$ are isotopic on $S$.
Indeed, since both $h_i$ and $h_j$ are homotopic to $f$, $\wh \f_i \circ f$ and $\wh \f_j \circ f$ are homotopic for all large enough $i, j$, and
Lemma \ref{Lem:homotopic} implies that $\wh \f_i$ and $\wh \f_j$ are isotopic for all large enough $i, j$.

Let $\wh \f_\star:=\wh \f_i$ for a large enough $i$. Since $[(\SS_i, G_i), \f_i \circ \wh \f_i^{-1}]$ converges to $[(S, G_\star), \f_\star]$, 
\[
[(\SS_i, G_i), \f_i] \to [(S, G_\star), \f_\star \circ \wh \f_\star]=[(S, G_\star), \wh \f_\star] \quad \text{ in $\Tc(S)$}.
\]
For any $\e>0$, and for all large enough $i$, we have \eqref{Eq:dilatation} and
\[
\Ec_\Cc([(\wh \f_\star)^\ast G_\star]) \le E_{(\wh \f_\star)^\ast G_\star}(h_i) \le (1+\e)E_{\f_i^\ast G_i}(h_i),
\]
and conclude that $\Ec_\Cc([(\wh \f_\star)^\ast G_\star]) \le R$ by letting $\e \to 0$, as required.
\qed

\section{Actions by finite subgroups of the mapping class groups}\label{Sec:MCG}

Recall that $\Aut(X)$ is the group of automorphisms $\s$ of a finite weighted graph $X=(V, E, m_E)$,
where $\s$ is a bijection from $V$ to itself, preserves edges with $\wbar{\s e}=\s \wbar e$ and $m_E(\s e)=m_E(e)$ for $e \in E$,
and also recall that the mapping class group $\Mod(S)$ is the group of isotopy classes of orientation-preserving homeomorphisms $\Homeo^+(S)$.
We define a subgroup of $\Mod(S)$ for any continuous map $f:X \to S$ by
\[
\Gc_{[f]}(S):=\Big{\{}[\f] \in \Mod(S) \ : \ \text{$\f \circ f \simeq f \circ \s$ for some $\s \in \Aut(X)$}\Big{\}},
\]
where $f_1 \simeq f_2$ if and only if $f_1$ and $f_2$ are homotopic.
This $\Gc_{[f]}(S)$ is a group depending only on the homotopy class of $f$.

\begin{lemma}\label{Lem:finiteness}
If $f: X \to S$ is a continuous map which induces a surjective homomorphism $\pi_1(X, x_0) \to \pi_1(S, f(x_0))$,
then the group $\Gc_{[f]}(S)$ is a finite group.
\end{lemma}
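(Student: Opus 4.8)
The plan is to exhibit an injection of $\Gc_{[f]}(S)$ into a finite set, so that the finiteness of $\Aut(X)$ forces the finiteness of $\Gc_{[f]}(S)$. Since $X=(V, E, m_E)$ is a finite graph, an automorphism $\s$ is determined by a permutation of the finite set $V \cup E$, so $\Aut(X)$ is a subgroup of a finite symmetric group and is itself finite. In particular, the collection of homotopy classes $\{[f \circ \s] : \s \in \Aut(X)\}$ of maps $X \to S$ contains at most $|\Aut(X)|$ elements.

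First I would consider the map
\[
\Psi: \Gc_{[f]}(S) \to \{\text{homotopy classes of continuous maps } X \to S\}, \qquad [\f] \mapsto [\f \circ f].
\]
This is well-defined on $\Mod(S)$: if $\f_1$ and $\f_2$ are isotopic, then in particular $\f_1 \circ f$ and $\f_2 \circ f$ are homotopic. By the very definition of $\Gc_{[f]}(S)$, for each class $[\f]$ there is some $\s \in \Aut(X)$ with $\f \circ f \simeq f \circ \s$, so the image of $\Psi$ is contained in the finite set $\{[f \circ \s] : \s \in \Aut(X)\}$.

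The crucial step is the injectivity of $\Psi$, and this is exactly where the surjectivity hypothesis on $f_\ast$ enters, through Lemma \ref{Lem:homotopic}. Suppose $\Psi([\f_1]) = \Psi([\f_2])$, that is, $\f_1 \circ f \simeq \f_2 \circ f$ for orientation-preserving homeomorphisms $\f_1, \f_2$. Applying Lemma \ref{Lem:homotopic} (whose hypothesis that $f$ induces a surjective homomorphism $\pi_1(X) \to \pi_1(S)$ is precisely the standing assumption) yields that $\f_1$ and $\f_2$ are isotopic on $S$, hence $[\f_1]=[\f_2]$ in $\Mod(S)$. Thus $\Psi$ is injective, and since its image is finite, $\Gc_{[f]}(S)$ is finite.

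I expect the only genuine subtlety to be the invocation of Lemma \ref{Lem:homotopic}: without the surjectivity of $f_\ast$ one cannot deduce the isotopy of $\f_1$ and $\f_2$ from the homotopy of $\f_1 \circ f$ and $\f_2 \circ f$, since distinct mapping classes could become indistinguishable after precomposition with $f$. Everything else is formal, and I would note that $\Psi$ need not be a group homomorphism --- injectivity of $\Psi$ as a map of sets already suffices for the conclusion.
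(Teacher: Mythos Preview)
Your proof is correct and essentially the same as the paper's: both rest on Lemma~\ref{Lem:homotopic} applied to the situation $\f_1 \circ f \simeq \f_2 \circ f$, together with the finiteness of $\Aut(X)$. The only cosmetic difference is direction---the paper builds a \emph{surjection} $\pi:\Gc_{[f]}(X)\to\Gc_{[f]}(S)$ from a subgroup of $\Aut(X)$ (with well-definedness supplied by Lemma~\ref{Lem:homotopic}), whereas you build an \emph{injection} $\Psi:\Gc_{[f]}(S)\to\{[f\circ\s]:\s\in\Aut(X)\}$ (with injectivity supplied by the same lemma); these are dual packagings of the same argument.
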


\proof
Let us define a subgroup of $\Aut(X)$ by
\[
\Gc_{[f]}(X):=\Big{\{}\s \in \Aut(X) \ : \ \text{$\f \circ f \simeq f \circ \s$ for some $[\f] \in \Mod(S)$}\Big{\}}.
\]
Assigning $\s \in \Gc_{[f]}(X)$ to $[\f] \in \Mod(S)$ such that $\f \circ f \simeq f \circ \s$,
we define the map 
\[
\pi: \Gc_{{[f]}}(X) \to \Gc_{[f]}(S).
\]
This map $\pi$ is well-defined; indeed, for $\s \in \Aut(X)$, if there are $\f_1, \f_2 \in \Homeo^+(S)$ such that $\f_1 \circ f \simeq f \circ \s$ and $\f_2 \circ f \simeq f \circ \s$, then $\f_1 \circ f \simeq \f_2 \circ f$.
Since $f:X \to S$ induces a surjective homomorphism from $\pi_1(X)$ to $\pi_1(S)$, Lemma \ref{Lem:homotopic} implies that $\f_1$ and $\f_2$ are isotopic on $S$, i.e., $[\f_1]=[\f_2] \in \Mod(S)$.
The map $\pi$ is a group homomorphism which is surjective by the definitions of $\Gc_{[f]}(S)$ and $\Gc_{[f]}(X)$.
Since the group $\Gc_{[f]}(X)$ is a subgroup of $\Aut(X)$ which is a finite group, and $\Gc_{[f]}(S)=\pi(\Gc_{[f]}(X))$, we conclude that $\Gc_{[f]}(S)$ is a finite group.
\qed

\proof[Proof of Theorem \ref{Thm:Nielsen}]
By Lemma \ref{Lem:fill}, we assume that 
$f$ induces a surjective homomorphism from $\pi_1(X)$ to $\pi_1(S)$.
Then Lemma \ref{Lem:finiteness} implies that $\Gc_{[f]}(S)$ is a finite group.
Let us simply write $\Gc=\Gc_{[f]}(S)$, 
and define
\begin{equation*}
\wh \Ec_\Cc:=\frac{1}{|\Gc|}\sum_{[\f] \in \Gc}\Ec_{\Cc}\circ [\f] : \Tc(S) \to [0, \infty).
\end{equation*}
Since $\Ec_{\Cc}$ is strictly convex with respect to the Weil-Petersson metric as well as proper by Theorem \ref{Thm:convex} and \ref{Thm:proper},
and $\Mod(S)$ acts on $\Tc(S)$ as isometries relative to the Weil-Petersson metric, each function $\wh \Ec_\Cc \circ [\f]$ for $[\f] \in \Gc$, and its convex combination $\wh \Ec_\Cc$ are also so.
Hence $\wh \Ec_\Cc$ has a unique minimizer $[(\wh \SS_0, \wh G_0), \wh \f_0]$ in $\Tc(S)$.
Note that $\wh \Ec_\Cc$ is invariant under the action of $\Gc$, and thus the minimizer is fixed by $\Gc$.
This implies that $\Gc$ acts on $(S, \wh \f_0^\ast \wh G_0)$ as isometries.
Each $[\f] \in \Gc$ has a unique representative as an isometry of $(S, \wh \f_0^\ast \wh G_0)$ since two isometries which are isotopy (or, homotopy) must coincide.
Let us define $\wt \Gc_0$ as the group generated by these isometries.

So far,
we have obtained a unique minimizer $[(\wh \SS_0, \wh G_0), \wh \f_0]$ for $\wh \Ec_\Cc$ and a group $\wt \Gc_0$ of isometries on $(S, \wh \f_0^\ast \wh G_0)$ such that $\wt \Gc_0 \to \Gc_{[f]}(S)$, $\wt \f \mapsto [\wt \f]$ induces an isomorphism of groups.
Then, we shall show that a point in $\Tc(S)$ is the unique minimizer of $\wh \Ec_\Cc$ if and only if it is the unique minimizer of $\Ec_\Cc$.

Suppose that $[(\SS_0, G_0), \f_0]$ is the unique minimizer of $\Ec_\Cc$.
Let $h_0: X \to (S, \f_0^\ast G_0)$ be the unique harmonic map in the homotopy class $\Cc=[f]$.
Then it is fixed by $\Gc_{[f]}(S)$:
indeed, for any $[\f] \in \Gc_{[f]}(S)$, there exists $\s_{[\f]} \in \Aut(X)$ such that
\[
\f \circ h_0 \simeq h_0 \circ \s_{[\f]}.
\]
Take a diffeomorphism $\f$ as a representative of $[\f]$, and a unique harmonic map 
\[
h_0':X \to (S, (\f_0 \circ \f^{-1})^\ast G_0),
\]
such that $h_0'$ is homotopic to $h_0$.
Then, noting that $h_0' \circ \s_{[\f]}$ is the unique harmonic map homotopic to $\f \circ h_0$ from $X$ to $(S, (\f_0 \circ \f^{-1})^\ast G_0)$,
we have
\[
E_{\f_0^\ast G_0}(h_0)=E_{(\f_0\circ \f^{-1})^\ast G_0}(\f \circ h_0) \ge E_{(\f_0\circ \f^{-1})^\ast G_0}(h_0' \circ \s_{[\f]})=E_{(\f_0\circ \f^{-1})^\ast G_0}(h_0'),
\]
where we have used that the energy $E_{G}(h_0)$ is invariant under pre-composing $\s_{[\f]}$ to $h_0$ in the last equality.
Hence
\[
\Ec_\Cc([\f_0^\ast G_0]) \ge \Ec_\Cc([\f]\cdot [\f_0^\ast G_0]),
\]
and the uniqueness of minimizer for $\Ec_\Cc$ implies that $[(\SS_0, G_0), \f_0]=[\f]\cdot [(\SS_0, G_0), \f_0]$ for any $[\f] \in \Gc_{[f]}(S)$.

As a consequence, we have
\[
\wh \Ec_\Cc([G_0])=\Ec_\Cc([G_0]).
\]
On the other hand,
if $[(\wh \SS_0, \wh G_0), \wh \f_0]$ is the unique minimizer of $\wh \Ec_\Cc$, then it is fixed by $\Gc_{[f]}(S)$ since $\wh \Ec_\Cc$ is invariant under $\Gc_{[f]}(S)$, 
and thus
\[
\wh \Ec_\Cc([\wh G_0])=\Ec_\Cc([\wh G_0]).
\]
Therefore $[G_0]=[\wh G_0]$.

Recall that 
the group $\wt \Gc_0$ acts on $(S, \f_0^\ast G_0)$ as isometries.
Since for each $\wt \f \in \wt \Gc_0$, $\wt \f \circ h_0$ is a harmonic map homotopic to $h_0 \circ \s_{[\wt \f]}$ for some $\s_{[\wt \f]} \in \Aut(X)$,
we have
\[
\wt \f \circ h_0=h_0 \circ \s_{[\wt \f]},
\]
by the uniqueness of harmonic map in the same homotopy class by Theorem \ref{Thm:KSharmonic} (iii).
We complete the proof.
\qed

\begin{remark}\label{Rem:Nielsen}
Note that for any finite subgroup $\Gc$ of $\Mod(S)$, there exists a hyperbolic surface $(S, G)$ and a group $\wt \Gc$ of isometries of $(S, G)$ such that the natural map $\wt \Gc \to \Gc$, $\f \mapsto [\f]$ gives an isomorphism of groups.
This is known as the {\it Nielsen realization problem}
(a theorem by Kerckhoff \cite{KerckhoffNielsen}; for the history and a proof, we refer to \cite[Section 6.4]{TrombaBook}, which is regarded as a continuous counterpart to our setting).
The proof which we have given above also provides another proof of this problem by using the functional $\Ec_\Cc$;
note that the first paragraph of the proof of Theorem \ref{Thm:Nielsen} works for any finite subgroup $\Gc$ of $\Mod(S)$.
\end{remark}

\section{Examples}\label{Sec:examples}

We give some examples where we are able to find the unique minimizer of energy functional $\Ec_\Cc$ in a fairly explicit way.
The first class of examples we discuss is a hyperbolic surface constructed by gluing right-angled hexagons in the hyperbolic plane $\H^2$.
Given a specific map $f:X \to S$,
we determine each hexagon and how they are combined to form the hyperbolic surface $(S, G)$ where the harmonic map into $(S, G)$ homotopic to $f$ realizes the minimum of $\Ec_\Cc$ with $\Cc=[f]$.
We discuss the case of closed surface  $S_2$ of genus $2$ with a map $f:X \to S_2$ in Section \ref{Sec:example_genus_2}, and generalize that construction to closed surface of higher genus in Section \ref{Sec:example_genus_g}.
Another class of examples is provided by classical triangle tessellations on the hyperbolic plane $\H^2$.
We discuss general triangle tessellations in Section \ref{Sec:example_triangle}, and in particular
show that Klein's surface of genus $3$ arises as the minimizer for some explicit energy functional.
In all cases, we apply Theorem \ref{Thm:Nielsen}; we make use of the group $\Gc_{[f]}(S)$ which we have introduced in order to reduce the dimension of parameter space (the Teichm\"uller space).

\subsection{A simple example for a closed surface of genus $2$}\label{Sec:example_genus_2}

A closed Riemann surface of genus two $S_2$ is obtained by two pairs of pants $\Pc$ and $\Pc'$.
A pair of pants is topologically a compact connected surface of genus zero with three boundary components which are circles.
Each pair of pants $\Pc$ (resp.\ $\Pc'$) is obtained by two hexagons $\Hc$ and $\wbar \Hc$ (resp.\ $\Hc'$ and $\wbar \Hc'$), where a set of every other sides and a set of corresponding sides are identified.
We glue two pairs of pants together by identifying three pairs of boundary circles, and obtain the surface $S_2$.
When we endow $S_2$ with a hyperbolic metric $G$, we realize each hexagon as an isometric copy of a right-angled hexagon whose sides are geodesics in the hyperbolic plane $\H^2$.
Note that by the Gauss-Bonnet theorem, any right-angled hexagon in $\H^2$ has the area $\pi=3.1415\cdots$.

We define a finite graph whose edges are sides of hexagons in the surface $S_2$, and vertices are the points at intersections of sides.
Let $X=(V, E, m_E)$ be the corresponding weighted graph with unit weight on edges $m_E \equiv 1$.
The map $f$ we consider is the natural embedding of $X$ into $S_2$.
Then, the map $f$ fills the surface $S_2$.
Therefore there exists a unique minimizer $(S_2, G_0)$ for the energy functional $\Ec_\Cc$ with $\Cc=[f]$ by Theorem \ref{Thm:main}.

\def\rot{{\rm rot}}
\def\inv{{\rm inv}}
\def\ex{{\rm ex}}

The group $\Gc_{[f]}(S_2)$ has three elements $\f_\rot$, $\f_\inv$ and $\f_\ex$, where $\f_\rot$ is given by the rotation of order $3$ corresponding to a cyclic permutation of glued three boundary components of two pairs of pants, $\f_\inv$ is given by an involution of order $2$ corresponding to exchanging two pairs of pants, and $\f_\ex$ is given by an involution of order $2$ exchanging hexagons $\Hc$ and $\wbar \Hc$, as well as $\Hc'$ and $\wbar \Hc'$ simultaneously
(Figure \ref{Fig:surface}).
\begin{figure}[h!]
\centering
\includegraphics[width=100mm]{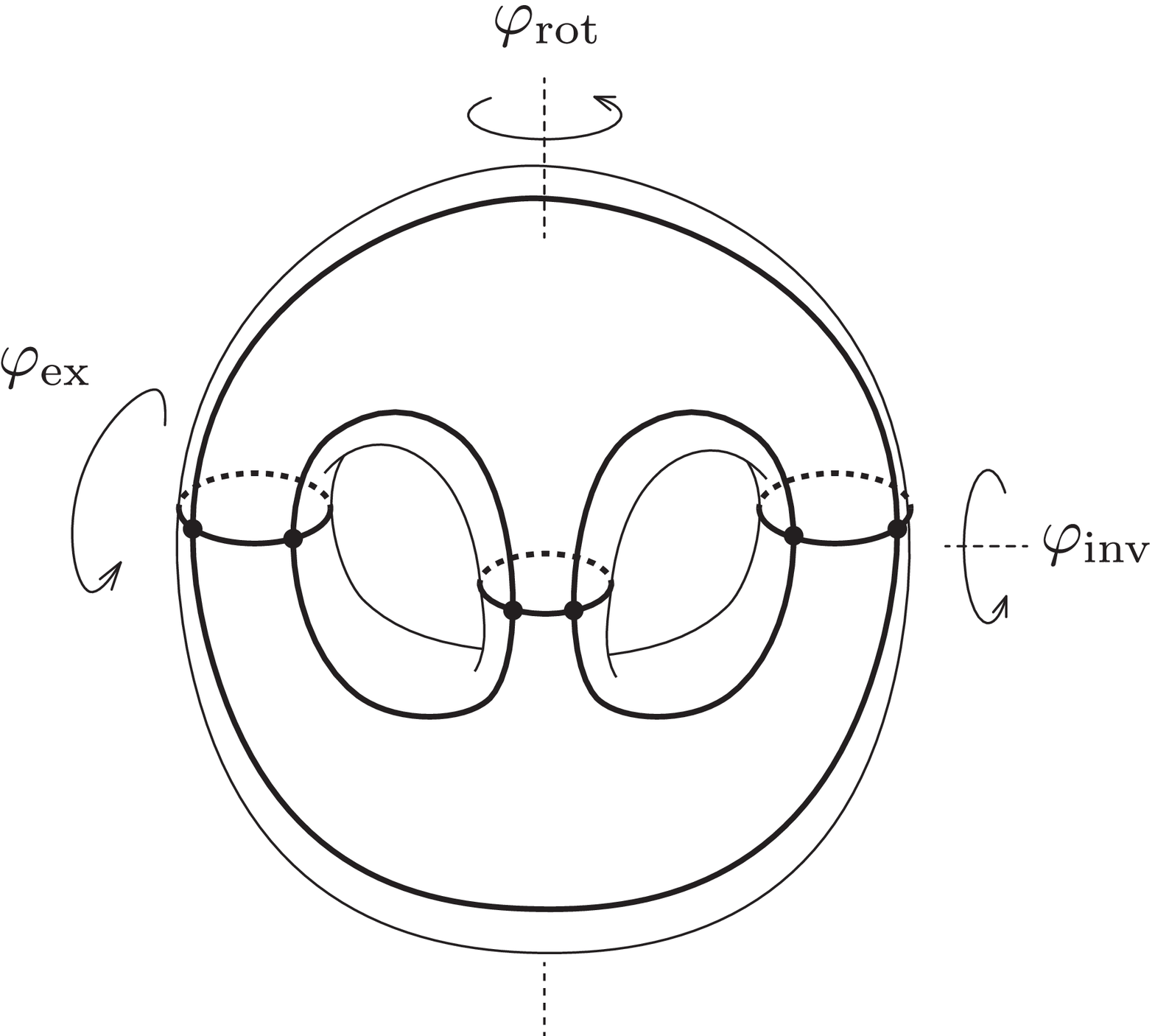}
\caption{A surface of genus $2$ with actions by $\f_\rot$, $\f_\inv$ and $\f_\ex$.}
\label{Fig:surface}
\end{figure}
If a hyperbolic surface $(S_2, G_0)$ realizes the minimum of $\Ec_\Cc$ with a harmonic map $h_0$ in the homotopy class $\Cc=[f]$, then Theorem \ref{Thm:Nielsen} implies that $(S_2, G_0)$ admits isometries corresponding to $\f_\rot$, $\f_\inv$ and $\f_\ex$, and the image of $h_0$ which consists of geodesic segments has to be invariant under their actions (as a set). 
This requires that four hexagons which are complements of the image of $h_0$ are isometric copies of a single right-angled hexagon $\Hc_0$ whose every other sides are geodesic segments of the same length.
Indeed, first each pair of pants $\Pc$ (resp.\ $\Pc'$) has three boundary components which are all (isometric) closed geodesics since $h_0$ is harmonic.
Second, every other sides in each hexagon have the same length by the action of $\f_\rot$, two pairs of pants $\Pc$ and $\Pc'$ are isometric by the action of $\f_\inv$, and the hexagons $\Hc$ and $\wbar \Hc$ (resp.\ $\Hc'$ and $\wbar \Hc'$) are isometric by the action of $\f_\ex$.
Then, the sides of these hexagons in the surface realizes the image of the harmonic map $h_0$.

This process reduces the space of parameters to be determined; originally it has the (real) dimension $6g-6=6$ (the dimension of Teichm\"uller space of $S_2$ where $g=2$), but now it is as large as the space of parameters to determine $\Hc_0$ --- it has the dimension $1$.

Let $\Hc_\ast$ be a right-angled hexagon whose sides are geodesic segments in $\H^2$ with one distinguished vertex (a mark).
We equip $\Hc_\ast$ with the induced orientation from $\H^2$.
From the mark, we denote every other sides in the counter-clockwise by $c_1$, $c_2$ and $c_3$, and remaining sides by $d_1$, $d_2$ and $d_3$ (Figure \ref{Fig:hexagon}).
\begin{figure}[h!]
\centering
\includegraphics[width=70mm]{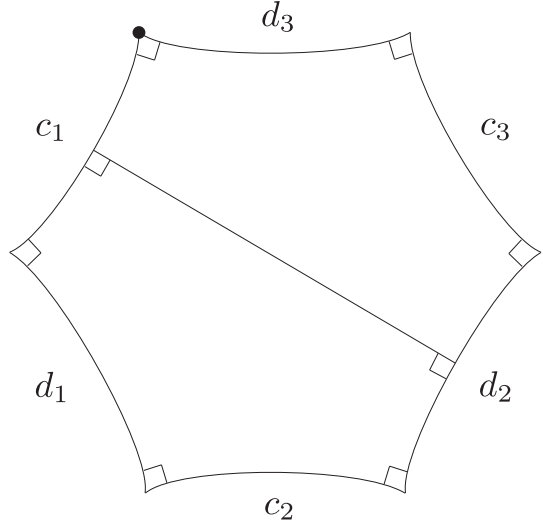}
\caption{A marked right-angled hexagon in the hyperbolic plane.}
\label{Fig:hexagon}
\end{figure}
Let $\wbar \Hc_\ast$ be the hexagon given by a reflection of $\Hc_\ast$. 
We glue $\Hc_\ast$ and $\wbar \Hc_\ast$ along $d_1$, $d_2$ and $d_3$ with their corresponding copies, and obtain a hyperbolic pair of pants $\Pc_0$.
Let $\wt c_1$, $\wt c_2$ and $\wt c_3$ be the boundary circles corresponding to $c_1$, $c_2$ and $c_3$.
Let $\Pc_0'$ be a reflected isometric copy of $\Pc_0$, and we glue $\Pc_0$ and $\Pc_0'$ by paring $\wt c_i$ and its corresponding copy for $i=1, 2, 3$.
The hyperbolic metric on the resulting surface is determined by the hexagon $\Hc_\ast$ whose sides are $c_i$ and $d_i$ for $i=1, 2, 3$.
Let $t:=l(c_i)$ and $s:=l(d_i)$ be the length of $c_i$ and $d_i$ for all $i=1, 2, 3$.
Then we have
\[
t=2 \sinh^{-1}\(\frac{\cosh (s/2)}{\sinh s}\),
\]
by the hyperbolic trigonometry; taking the geodesic from the midpoint in $c_1$ to the midpoint of $d_2$ in $\Hc$, we have in the right-angled pentagon containing $c_2$,
\[
\sinh\(\frac{l(c_1)}{2}\)\cdot \sinh\(l(d_1)\)=\cosh\(\frac{l(d_2)}{2}\)
\]
\cite[(V.2) in Section 8.1]{FenchelNielsenBook}.
Let $(S_2, G_{s})$ be the resulting hyperbolic surface.
If we define the map $h_s: X \to (S_2, G_s)$ so that the image $h_s(X)$ is realized by the sides of hexagons and all $h_{s, e}:[0, 1] \to (S_2, G_s)$ for $e \in E$ are the constant speed geodesics,
then $h_s$ is harmonic since it satisfies the balanced condition on each vertex.
The energy function becomes
\[
\Ec_{\Cc}([G_{s}])=E_{G_{s}}(h_s)=6(s^2+t^2)=6s^2+24\(\sinh^{-1}\(\frac{\cosh (s/2)}{\sinh s}\)\)^2.
\]

In fact, we may extend the above argument slightly to the case when all the weights are not equal to $1$.
If the edges corresponding to the sides $c_i$ of the hexagon $\Hc_\ast$ have the weight $m_c$ and the edges corresponding to the sides $d_i$ of $\Hc_\ast$ have the weight $m_d$ for all $i=1, 2, 3$, then the map $h_s:X \to (S_2, G_s)$ is still harmonic in this case as well.
The energy functional becomes
\[
\Ec_{\Cc}([G_s])=E_{G_{s}}(h_s)=6(m_d\, s^2+m_c\, t^2)=6m_d\, s^2+24m_c\(\sinh^{-1}\(\frac{\cosh (s/2)}{\sinh s}\)\)^2.
\]
Then we shall find the corresponding unique minimizer for the weights $m_c$ and $m_d$.

The problem is to seek the minimizer of the function
\[
m_d\,s^2+m_c\,t^2
\] 
under the restriction
\begin{align}\label{note1}
\sinh\Big{(}\frac{t}{2}\Big{)}\cdot \sinh(s)=\cosh\Big{(}\frac{s}{2}\Big{)},\quad s>0,\quad t>0,
\end{align} 
which is equivalent to
\[
\sinh\Big{(}\frac{s}{2}\Big{)}\sinh\Big{(}\frac{t}{2}\Big{)}=\frac{1}{2}.
\]
Letting $F(s,t):=\sinh(s/2)\sinh(t/2)-1/2$, we have
\[
\p_sF=(1/2)\cosh (s/2)\sinh(t/2) \quad \text{and} \quad \p_tF=(1/2)\sinh (s/2)\cosh(t/2).
\] 
Since $s>0$ and $t>0$ we may assume $\p_sF\neq 0$ and $\p_tF\neq 0$ in the following discussion. 
By using the method of Lagrange multiplier, we deduce that if $(s,t)$ is the minimizer of the above function, then 
\begin{align}\label{note3}
\sinh\Big{(}\dfrac{s}{2}\Big{)}\sinh\Big{(}\dfrac{t}{2}\Big{)}=\frac{1}{2}, \qquad \dfrac{\tanh(s/2)}{\tanh(t/2)}=\dfrac{m_c t}{m_d s}.
\end{align}
Let us define
\[
H(s,t):=\frac{s\tanh(s/2)}{t\tanh(t/2)} \quad \text{and} \quad M:=\frac{m_c}{m_d}. 
\] 
For any given $s>0$, the first equation of \eqref{note3} determines 
\[
t(s):=2\sinh^{-1}\Big{(}\frac{1}{2\sinh(s/2)}\Big{)}.
\]
Thus, the problem is reduced to find a solution of the equation
\begin{align}\label{note4}
H(s, t(s))=M\quad {\rm for}\quad s>0\quad {\rm and}\quad M\in(0, \infty).
\end{align}
One can check that the function $H(s, t(s))$ is strictly increasing for $s\in (0,\infty)$ and the range coincides with $(0,\infty)$.
Hence there exists a unique solution $(s, t(s))$ of \eqref{note4} for any $M\in (0,\infty)$. 
This gives the unique minimizer of the energy functional $\Ec_\Cc$ for the weight $(m_d, m_c)$. 
If $M=1$, then $s=t=\log(2+\sqrt{3})$, namely, the corresponding hexagon is the regular hexagon.  If $M\to 0$ or $M \to \infty$, then $(s,t)\to (0,\infty)$ or $(s, t) \to (\infty, 0)$, respectively, i.e., if we change the ratio $M$ continuously from $1$ to $0$ or from $1$ to $\infty$,  
then the corresponding hexagon tends to an ideal triangle.

\subsection{Closed surfaces of genus greater than one}\label{Sec:example_genus_g}

Let $S_g$ be a closed oriented surface of genus $g \ge 2$.
We consider a graph $X$ and a continuous map $f:X \to S_g$ such that the complement of the image of $f$ gives a decomposition into $2(g-1)$-pairs of pants.
More precisely, for each $i=1, \dots, g-1$, let $\Hc_i$ be an oriented hexagon in the plane, $\wbar \Hc_i$ be a reflected copy of $\Hc_i$, and $\Pc_i=\Hc_i \cup \wbar \Hc_i$ be a pair of pants where we identify a set of every other sides of $\Hc_i$ with the corresponding set of sides of $\wbar \Hc_i$. 
Let $\Pc_i'$ be a homeomorphic copy of $\Pc_i$.
For $1 \le i \le g-1$, we glue $\Pc_i$ and $\Pc_i'$ by identifying two pairs of boundary components so that $\Pc_i \cup \Pc_i'$ is a compact connected oriented surface of genus $1$ with two boundary components, which we denote by $\d_i$, $\wbar \d_i$.
Then, we glue all pairs of pants by identifying $\wbar \d_i$ and $\d_{i+1}$ in the orientation-reversing way, where the indices $i$ are modulo $g-1$, and obtain a closed oriented surface $S_g$ of genus $g$.
Let $X$ be a finite graph whose edges are the (identified) sides of hexagons, and vertices are the (identified) corners of hexagons in $S_g$.
We denote by $X=(V, E, m_E)$ the corresponding weighted graph with unit weights $m_E \equiv1$, and by $f_g:X \to S_g$ the natural embedding.
Note that $f_g$ fills $S_g$ for every $g \ge 2$.
We shall find the unique minimizer of the energy functional $\Ec_{\Cc(g)}$ for $\Cc(g)=[f_g]$.

Actually, for any $g \ge 2$, we show that the unique minimizer is obtained by gluing $4(g-1)$ isometric copies of regular right-angled hexagons.
Consider an elements $\f_\ex$ of order $2$ and an element $\f_\rot$ of order $g-1$ in $\Mod(S_{g})$ such that $\f_\ex$ exchanges two hexagons $\Hc_i$ and $\wbar \Hc_i$ (resp.\ $\Hc_i'$ and $\wbar \Hc_i'$) in each $\Pc_i$ (resp.\ $\Pc_i'$) simultaneously in the orientation-preserving way, and $\f_{\rot}$ sends $\Pc_i \cup \Pc_i'$ and $\Pc_{i+1} \cup \Pc_{i+1}'$ for $i \mod g-1$.
Note that $\f_\ex, \f_\rot \in \Gc_{[f_g]}(S_{g})$.
If we have a hyperbolic surface $(S_{g}, G_{g, 0})$ realizing the unique minimizer of $\Ec_{\Cc(g)}$, then the image of the unique harmonic map in the class $\Cc(g)$ is invariant under the action of isometric group realizing $\Gc_{[f_g]}(S_g)$ by Theorem \ref{Thm:Nielsen}. 
This implies that $\Pc_i \cup \Pc_i'$ are isometric for all $i=1, \dots, g-1$, and two boundary components of $\Pc_{i}\cup \Pc_{i}'$ are closed geodesics which are isometric for each $i=1, \dots, g-1$.
Moreover, two hexagons $\Hc_i$ and $\wbar \Hc_i$ (resp.\ $\Hc_i'$ and $\wbar \Hc_i'$) in $\Pc_i$ (resp.\ $\Pc_i'$) are isometric for each $i=1, \dots, g-1$.
Since sides of hexagons are in the image of the harmonic map, the sides are geodesic arcs, and such arcs are contained in closed geodesics; since there are exactly four hexagons at each corner in the surface, each hexagon is right-angled.
Hence we may cut out $\Pc_{1}\cup \Pc_{1}'$ and glue two boundary components of $\Pc_{1}\cup \Pc_{1}'$ by orientation-reversing way.
The resulting surface $(S_{2}, G_{\ast})$ is a hyperbolic surface of genus $2$ endowed with the image of $f_{2}$.
Note that the total energy $\Ec_{\Cc(g)}$ is the sum of squared lengths of sides of hexagons and all $\Pc_i \cup \Pc_i'$ are isometric.
We observe that if $(S_g, G_{g, 0})$ is the unique minimizer of $\Ec_{\Cc(g)}$ if and only if $(S_2, G_{\ast})$ is the unique minimizer of $\Ec_{\Cc(2)}$.
Since the unique minimizer of $\Ec_{\Cc(2)}$ is given by gluing four isometric copies of regular right-angled hexagons in Section \ref{Sec:example_genus_2}, the hyperbolic surface $(S_g, G_{g, 0})$ is obtained by gluing $4(g-1)$ isometric copies of regular right-angled hexagons.

\bigskip

\begin{figure}[h!]
\centering
\includegraphics[width=145mm]{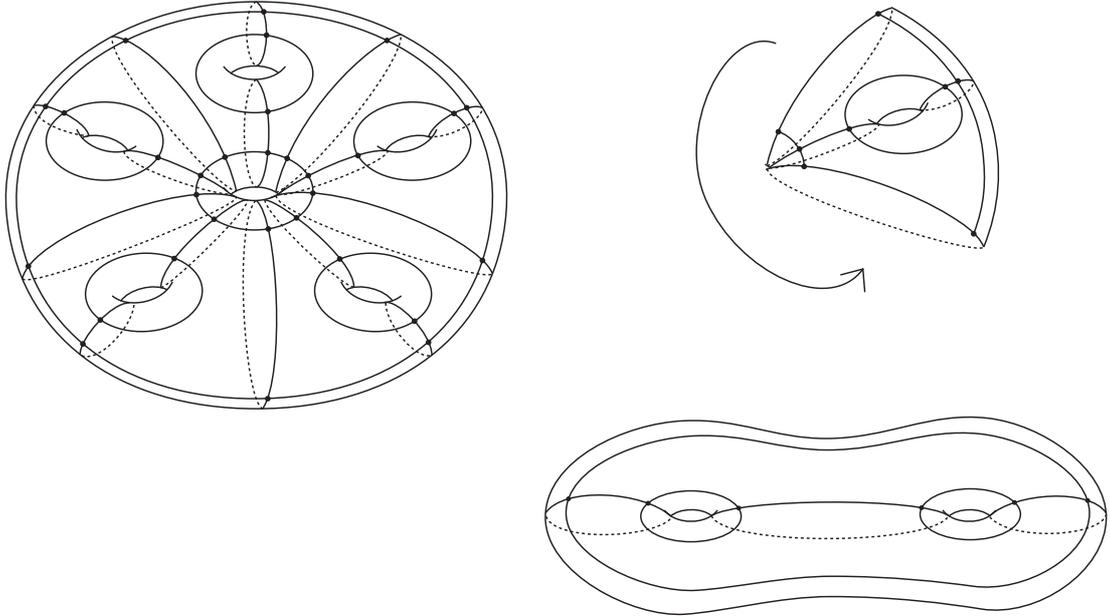}
\caption{The case of closed surface of genus $6$ with an embedded finite graph (the upper left), a fundamental domain of a rotation element of order $5$ in the mapping class group (the upper right), and the closed surface of genus $2$ with an embedded finite graph made from the fundamental domain above (the down right).}
\label{Fig:surface_g}
\end{figure}

\subsection{Closed surfaces associated with triangle tessellations}\label{Sec:example_triangle}

Let $\D$ be a geodesic triangle in $\H^2$ with inner angles $\pi/p$, $\pi/q$ and $\pi/r$ for positive integers $p$, $q$ and $r$ with $p^{-1}+q^{-1}+r^{-1}<1$.
We consider the associated triangle group $T(p, q, r)$, and a subgroup $\G$ of index $2$ with a fundamental domain $\D \cup \wbar \D$ (where $\wbar \D$ is a reflected copy of $\D$).
There exists a finite index, torsion-free normal subgroup $\G_0$ of $\G$
(\cite[Theorem in Section 8.6]{Stillwell}; in general, this follows from Selberg's lemma).
The quotient space $(S, G)=\G_0 \backslash \H^2$ is a closed hyperbolic surface.
The group $\G/\G_0$ acts on $(S, G)$ by isometry with a fundamental domain which is an isometric copy of $\D \cup \wbar \D$; this gives a triangle tessellation of $(S, G)$.
We consider the finite graph $X=(V, E, m_E)$ where edges arise as sides of triangles in $(S, G)$ and vertices are points of intersections of sides with unit weight $m_E \equiv 1$, and the natural embedding map $f:X \to S$.
Note that $f$ fills the surface $S$.
We claim that $(S, G)$ attains the minimum of $\Ec_\Cc$ with $\Cc=[f]$.

If $(S, G_0)$ attains the minimum of $\Ec_\Cc$, then the group $\Gc_{[f]}(S)$ is realized as an isometry group $\wt \Gc_0$ of $(S, G_0)$ by Theorem \ref{Thm:Nielsen}, and $\wt \Gc_0$ has to contain $\Gc_0$ which is an isomorphic copy of $\G/\G_0$.
The quotient space $\Gc_0\backslash (S, G_0)$ is a hyperbolic orbifold, which is obtained by gluing the sides of two copies of a triangle $\D'$ (Figure \ref{Fig:triangle}).
Since $\Gc_0$ is isomorphic to $\G/\G_0$ and is realized in $\wt \Gc_0$ via $\Gc_{[f]}(S)$, the triangle $\D'$ is isometric to $\D$.
\begin{figure}[h!]
\centering
\includegraphics[width=60mm]{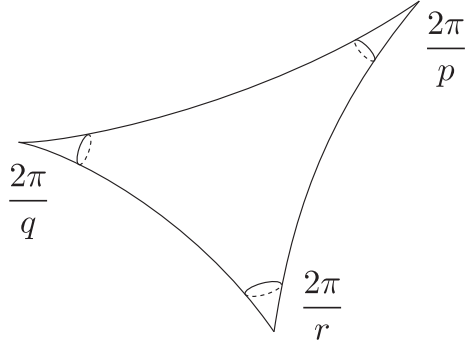}
\caption{A hyperbolic orbifold corresponding to the triangle of inner angles $\(\frac{\pi}{p}. \frac{\pi}{q}, \frac{\pi}{r}\)$.}
\label{Fig:triangle}
\end{figure}
Moreover, since $(\G/\G_0)\backslash(S, G)$ is isometric to this orbifold and $\Gc_0$ is isomorphic to $\G/\G_0$, the surface $(S, G_0)$ is isometric to $(S, G)$.
Therefore $(S, G)$ realizes the minimum of $\Ec_\Cc$ with $\Cc=[f]$.

Let $h:X \to (S, G)$ be the map such that the image coincides with that of $f$ and $h_e:[0, 1] \to (S, G)$ is the constant speed geodesic for each $e \in E$. 
The map $h$ is harmonic since it satisfies the balanced condition at each vertex.
Thus the energy is computed by
\[
\Ec_\Cc([G])=E_G(h)=|\G/\G_0|(l_1^2+l_2^2+l_3^2),
\]
where $l_1$, $l_2$ and $l_3$ are the side lengths of the triangle $\D$.

If we have a periodic weight function on the same graph, i.e., we assign weights $m_1$, $m_2$ and $m_3$ corresponding to the sides of $\D$ and extend periodically by $\G/\G_0$ on the surface, 
then $(S, G)$ is still the unique minimizer of $\Ec_\Cc$ with $\Cc=[f]$ and $h:X \to (S, G)$ is still harmonic.
The energy becomes
\[
\Ec_\Cc([G])=E_G(h)=|\G/\G_0|(m_1\, l_1^2+m_2\, l_2^2+m_3\, l_3^2),
\]
and $(S, G)$ is the unique minimizer of $\Ec_\Cc$ with $\Cc=[f]$ regardless of the values of $m_1$, $m_2$ and $m_3$.

\def\Klein{{\rm Klein}}

As a special case,
let us consider 
the triangle group $T(2, 3, 7)$.
For a subgroup $\G$ of $T(2, 3, 7)$ with index $2$,
there is a regular $14$-gon with all inner angle $2 \pi/7$ in the plane $\H^2$ as a fundamental domain of $\G_0$.
The quotient by this subgroup $\G_0$ is a hyperbolic surface known as the \textit{Klein quartic surface} (e.g., \cite[Example 2 in Section 7.3, p.176]{Stillwell}).
We denote the surface by $(S_3, G_{\Klein})$.
This is a surface of genus $3$ with the isometry group $\Gc_{0}$ of order $168$, which is the largest possible among hyperbolic surfaces of genus $3$, and such a surface is unique up to isometry.
(Recall that the order of isometry group for a closed hyperbolic surface of genus $g \ge 2$ is at most $84(g-1)$.)
Note that an isometric copy of a pair of triangles $\D$ and $\wbar \D$ is a fundamental domain of this isometry group $\Gc_{0}$ in $(S_3, G_{\Klein})$.
If we consider the graph $X$ and the map $f:X \to S_3$ given by sides of triangles in $(S_3, G_{\Klein})$ as above, 
the surface $(S_3, G_{\Klein})$ realizes the unique minimizer of the energy functional $\Ec_\Cc$ with $\Cc=[f]$.

In this case, the group $\Gc_{[f]}(S_3)$ is isomorphic to $\Gc_0$.
Indeed, first
the group $\Gc_{[f]}(S_3)$ has to contain an isomorphic copy of $\Gc_{0}$.
If $\Gc_{[f]}(S_3)$ is realized as an isometry group of some hyperbolic surface by Theorem \ref{Thm:Nielsen}, then that surface has to be isometric to $(S_3, G_{\Klein})$ since $\Gc_{0}$ and thus $\Gc_{[f]}(S_3)$ have the largest order of isometry group among hyperbolic surfaces of genus $3$, and this shows that $|\Gc_{[f]}(S_3)| = |\Gc_0|$; hence $\Gc_{[f]}(S_3)$ and $\Gc_{0}$ are isomorphic.
In particular, $(S_3, G_{\Klein})$ is the unique minimizer of $\Ec_\Cc$ with $\Cc=[f]$.

\appendix

\section{The first and second variation formulas}

We show the first and second variation formulae for finite weighted graphs into Riemannian manifolds.
This result is proved by Kotani and Sunada \cite[Theorem 2.1]{KotaniSunadaStandard}; we give a proof for the sake of convenience.

Let $X=(V, E, m_E)$ be a finite weighted graph and $(M, G)$  a Riemannian manifold.  
A map $f: X\to M$ is called  {\it piecewise $C^k$} if $f_{e}:=f|_e: [0,1]\to M$ is $C^k$ for each edge $e\in E$. We assume $k\geq 2$ throughout this appendix.
Consider a smooth family of piecewise $C^k$-maps  $f_{s}: X \to (M, G)$ for $s\in (-\epsilon, \epsilon)$ with $f:=f_{0}$ for $\e>0$.  
Let $f_e(s, t):=f_{s, e} (t)$ and $T_e(s, t):=(\p/\p t)f_e(s, t)$ for each edge $e\in E$. We denote the variation vector fields by $(\p/\p s)f_e(s, t)=V_e(s, t)$.
We denote the Levi-Civita connection and its curvature tensor of $G$ by $\nabla$ and $R$, respectively.

\begin{lemma}\label{Lem:first and second}
For any $s \in (-\e, \e)$, we have the first variation formula: 
\begin{align}\label{Eq:firstVF}
\frac{d}{d s} E_G(f_{s})=-2\sum_{e\in E}m_E(e)G( V_e, T_e)(s,0)-\sum_{e\in E}m_E(e)\int_0^1 G( V_e, \nabla_{T_e}T_e)(s,t) dt,
\end{align}
and the second variation formula:
\begin{align}\label{Eq:secondVF}
&\frac{d^2}{d s^2} E_G(f_{s})=-2\sum_{e\in E}m_E(e)G(\nabla_{V_e} V_e, T_e)(s,0)\nonumber\\
&+\sum_{e\in E}m_E(e) \int_0^1 \Big{\{}-G( \nabla_{V_e}V_e,  \nabla_{T_e}T_e)+\|\nabla_{T_e}V_e\|_G^2  -G(  R({V_e}, T_e)T_e, V_e) \Big{\}}(s, t)dt.
\end{align}
\end{lemma}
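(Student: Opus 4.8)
The plan is to differentiate the energy
$E_G(f_s)=\tfrac{1}{2}\sum_{e\in E}m_E(e)\int_0^1 G(T_e,T_e)(s,t)\,dt$
directly under the integral sign (legitimate since $f_s$ is a smooth family of piecewise $C^k$ maps with $k\ge 2$ on a compact domain), and then integrate by parts in the variable $t$ edge by edge. For the first variation, metric compatibility of $\nabla$ together with $[\p_s,\p_t]=0$ and the vanishing of the torsion give the basic identity $\nabla_{V_e}T_e=\nabla_{T_e}V_e$, whence $\p_s G(T_e,T_e)=2G(\nabla_{T_e}V_e,T_e)$. Rewriting $G(\nabla_{T_e}V_e,T_e)=\p_t G(V_e,T_e)-G(V_e,\nabla_{T_e}T_e)$ and integrating over $[0,1]$ produces the interior term $-\int_0^1 G(V_e,\nabla_{T_e}T_e)\,dt$ together with a boundary term $[G(V_e,T_e)]_{t=0}^{t=1}$ for each edge.

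The key step --- and the only place where the combinatorial structure of $X$ enters --- is the reorganization of the boundary contributions summed over all oriented edges. Here one uses that $E$ consists of oriented edges with $m_E(\wbar e)=m_E(e)$, that the variation field at a common endpoint is intrinsic to the vertex, so $V_{\wbar e}(s,0)=V_e(s,1)$, and that reversal flips the velocity, $T_{\wbar e}(s,0)=-T_e(s,1)$. Consequently $G(V_e,T_e)(s,1)=-G(V_{\wbar e},T_{\wbar e})(s,0)$, and summing over $e$ (reindexing $e\leftrightarrow\wbar e$) folds the terminal contributions onto the initial ones, yielding $\sum_{e}m_E(e)[G(V_e,T_e)]_{0}^{1}=-2\sum_{e}m_E(e)G(V_e,T_e)(s,0)$. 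This gives \eqref{Eq:firstVF}.

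For the second variation I would differentiate $\frac{d}{ds}E_G(f_s)=\sum_{e}m_E(e)\int_0^1 G(\nabla_{T_e}V_e,T_e)\,dt$ once more in $s$. Metric compatibility gives $\p_s G(\nabla_{T_e}V_e,T_e)=G(\nabla_{V_e}\nabla_{T_e}V_e,T_e)+\|\nabla_{T_e}V_e\|_G^2$, again using $\nabla_{V_e}T_e=\nabla_{T_e}V_e$ for the second summand. The first summand is treated by the curvature identity $\nabla_{V_e}\nabla_{T_e}V_e=\nabla_{T_e}\nabla_{V_e}V_e+R(V_e,T_e)V_e$, valid since $[V_e,T_e]=0$; the antisymmetry $G(R(V_e,T_e)V_e,T_e)=-G(R(V_e,T_e)T_e,V_e)$ then produces the curvature term with the correct sign, while a second integration by parts $G(\nabla_{T_e}\nabla_{V_e}V_e,T_e)=\p_t G(\nabla_{V_e}V_e,T_e)-G(\nabla_{V_e}V_e,\nabla_{T_e}T_e)$ produces the interior term $-G(\nabla_{V_e}V_e,\nabla_{T_e}T_e)$ and a boundary term $[G(\nabla_{V_e}V_e,T_e)]_{0}^{1}$.

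The boundary term is reorganized exactly as in the first variation: $\nabla_{V_e}V_e$ is the acceleration of the image of a vertex and is therefore vertex-intrinsic, so the same reversed-edge cancellation yields $\sum_{e}m_E(e)[G(\nabla_{V_e}V_e,T_e)]_{0}^{1}=-2\sum_{e}m_E(e)G(\nabla_{V_e}V_e,T_e)(s,0)$, and we arrive at \eqref{Eq:secondVF}. I expect the main obstacle to be precisely this boundary bookkeeping --- keeping the orientation conventions consistent so that the terminal endpoint contributions fold correctly onto the initial ones, producing both the factor $-2$ and the evaluation at $t=0$ --- rather than the differential-geometric identities, which become routine once $[\p_s,\p_t]=0$ and $[V_e,T_e]=0$ are invoked. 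A minor secondary point to watch is the curvature sign convention, so that the term $G(R(V_e,T_e)T_e,V_e)$ appears exactly as in \eqref{Eq:secondVF}.
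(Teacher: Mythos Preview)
Your proposal is correct and follows essentially the same approach as the paper: differentiate under the integral, use $\nabla_{V_e}T_e=\nabla_{T_e}V_e$ from $[V_e,T_e]=0$, integrate by parts in $t$, and fold the boundary terms using $V_e(s,1)=V_{\wbar e}(s,0)$, $T_e(s,1)=-T_{\wbar e}(s,0)$, $m_E(e)=m_E(\wbar e)$ to obtain the factor $-2$ at $t=0$; the second variation is handled identically via the curvature identity and the observation that $\nabla_{V_e}V_e$ is vertex-intrinsic. The only cosmetic difference is that the paper differentiates $G(T_e,T_e)$ twice from scratch, while you differentiate the already-simplified first derivative $\int_0^1 G(\nabla_{T_e}V_e,T_e)\,dt$, which amounts to the same computation.
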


\begin{proof}
First, we see that
\begin{align}\label{f1}
\frac{d}{d s} E_G(f_{s})
&=\frac{1}{2}\sum_{e\in E}m_E(e)\int_0^1 \frac{\p}{\p s}G( T_e, T_e)\,dt
=\sum_{e\in E}m_E(e)\int_0^1 G( \nabla_{V_e}T_e, T_e)\, dt\nonumber\\
&=\sum_{e\in E}m_E(e)\int_0^1G(  \nabla_{T_e}V_e, T_e)\, dt \quad \quad \quad \text{(use $[V_e, T_e]=0$)}\nonumber\\
&=\sum_{e\in E}m_E(e)\int_0^1\frac{\p}{\p t}G(  V_e, T_e)- G(  V_e, \nabla_{T_e}T_e)\, dt\nonumber\\
&=\sum_{e\in E}m_E(e)\Big{[}G( V_e, T_e)\Big{]}_{t=0}^{t=1}-\sum_{e\in E}m_E(e)\int_0^1 G(  V_e,  \nabla_{T_e}T_e)\, dt.
\end{align}
Since 
\begin{equation}\label{Eq:ap1}
V_e(s, 1)=V_{\wbar e}(s, 0) \quad \text{and} \quad T_e(s,1)=- T_{\wbar e}(s,0)
\end{equation}
and $m_E(e)=m_E(\overline{e})$, we have that
\begin{align*}
\sum_{e\in E}m_E(e)G{(} V_e, T_e{)}(s,1)
	&=\sum_{e\in E}m_E(\overline{e})G{(} V_{\overline{e}}, -T_{\wbar e}{)}(s,0)\\
	&=-\sum_{e\in E}m_E(e)G{(} V_e, T_e{)}(s,0),
\end{align*}
and thus,
\begin{align}\label{f2}
\sum_{e\in E}m_E(e)\Big{[}G( V_e, T_e)\Big{]}_{t=0}^{t=1}=-2\sum_{e\in E}m_E(e)G( V_e, T_e)(s,0).
\end{align}
Combining \eqref{f2} with \eqref{f1}, we obtain the first variation formula \eqref{Eq:firstVF}.

Next, we have that
\begin{align}\label{f3}
\frac{d^2}{d s^2} E_G(f_{s})&=\frac{1}{2}\sum_{e\in E}m_E(e)\int_0^1 \frac{\p^2}{\p s^2}G( T_e, T_e)\, dt\nonumber\\
&=\sum_{e\in E}m_E(e)\int_0^1 G(\nabla_{V_e}\nabla_{V_e}T_e, T_e)+\|\nabla_{V_e}T_e\|^2\,dt\nonumber \\
&=\sum_{e\in E}m_E(e)\int_0^1 G(\nabla_{V_e}\nabla_{T_e}V_e, T_e)+\|\nabla_{T_e}V_e\|^2\,dt \quad (\text{use}\ [V_e,T_e]=0)\nonumber\\
&=\sum_{e\in E}m_E(e)\int_0^1 G(\nabla_{T_e}\nabla_{V_e}V_e, T_e)+G(R(V_e, T_e)V_e, T_e)+\|\nabla_{T_e}V_e\|^2\,dt.
\end{align}
The first term becomes 
\begin{align}\label{f4}
&\sum_{e\in E}m_E(e)\int_0^1 G(\nabla_{T_e}\nabla_{V_e}V_e, T_e)\, dt=\sum_{e\in E}m_E(e)\int_0^1 \frac{\p}{\p t}G(\nabla_{V_e}V_e, T_e)-G(\nabla_{V_e}V_e, \nabla_{T_e}T_e)\, dt \nonumber\\
&=-2\sum_{e\in E}m_E(e)G(\nabla_{V_e}V_e, T_e)(s,0)-\sum_{e\in E}m_E(e)\int_0^1G(\nabla_{V_e}V_e, \nabla_{T_e}T_e)\, dt,
\end{align}
where we used \eqref{f2} and  $\nabla_{V_e}V_e(s,1)=\nabla_{V_{\wbar e}}V_{\wbar e}(s,0)$. Substituting \eqref{f4} to \eqref{f3} and using the fact $G(R(V_e, T_e)V_e, T_e)=-G(R(V_e, T_e)T_e, V_e)$, we obtain the second variation formula \eqref{Eq:secondVF}.
\end{proof}

By the first variation formula \eqref{Eq:firstVF}, a map $f$ is a critical point of the energy functional $E_G$ if and only if 
\begin{align}\label{harmmap}
\nabla_{T_e}T_e=0\quad {\rm and}\quad \sum_{e\in E_x} m_E(e)T_e(x)=0
\end{align}
for any edge $e\in E$ and $x\in V$. 
We call the map satisfying \eqref{harmmap} a harmonic map.
Moreover, according to the second variation formula \eqref{Eq:secondVF}, we see the Hessian of $E_G$ for a harmonic map $h: X\to (M,G)$ is given by
\[
{\rm Hess}_{E_G}(V,W)_h=\sum_{e\in E} m_E(e)\int_0^1 \Big{\{}G( \nabla_{T_e}V_e, \nabla_{T_e}W_e ) -G(  R({V_e}, T_e)T_e, W_e )\Big{\}}\,dt,
\]
for any two variation vector fields $V, W$ of $h$. 
From the expression, we see that ${\rm Hess}_{E_G}$ is non-negative definite if $(M,G)$ has non-positive sectional curvature. If furthermore, $(M,G)$ is a compact Riemannian manifold of negative sectional curvature and the image of the harmonic map $h: X\to M$ is not homotopic to a point nor a closed circle, then ${\rm Hess}_{E_G}$ is non-degenerate at $h$ (see the proof of Theorem \ref{Thm:KSharmonic} (iii)).

\section{Smooth dependence of  harmonic maps}

\def\Met{{\rm Met}}
Let $(M,G)$ be a compact smooth Riemannian manifold. 
For any integer $k\geq 1$, we denote the set of all $C^k$-Riemannian metrics on $M$ by ${\rm Met}^k(M)$. Note that the space ${\rm Met}^k(M)$ is an open subset of the Banach space consisting of $C^k$-symmetric $(0,2)$-tensors on $M$.

\begin{proposition}\label{smooth dependence}
Fix any integer $k \ge 1$.
Let $X$ be a finite weighted graph, $(M, G)$ be a closed Riemannian manifold of nonpositive sectional curvature and $h: X\to M$ be a harmonic map. 
If the Hessian $\Hess_{E_G}$ of the energy functional $E_G$ is non-degenerate at $h$,
then there exists an open neighborhood $\Uc$ of $G$ in ${\rm Met}^{k+1}(G)$ and $C^k$-map $\wt h:\Uc \to C^{k+2}(X, M)$ such that 
$\wt h(G'):X \to M$ is a harmonic map for each $G' \in \Uc$ and $\wt h(G)=h$.
\end{proposition}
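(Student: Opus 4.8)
The plan is to realize $G'$-harmonic maps near $h$ as the zeros of a single nonlinear operator and apply the implicit function theorem in Banach spaces, with the non-degeneracy hypothesis guaranteeing that the relevant linearization is invertible. First I would fix local coordinates on the space of maps. Let $\mathcal B$ be the Banach space of continuous, piecewise $C^{k+2}$ vector fields $V=(V_e)_{e\in E}$ along $h$; that is, $V_e\in C^{k+2}([0,1],h_e^\ast TM)$ subject to the matching conditions $V_e(1)=V_{\wbar e}(0)$ at each vertex, so that $V$ has a well-defined value $V(x)\in T_{h(x)}M$ for every $x\in V$. Edgewise exponentiation $V\mapsto \exp_h(V):=(\exp_{h_e}V_e)_{e\in E}$ is a smooth diffeomorphism from a neighborhood $\mathcal B_0$ of $0$ in $\mathcal B$ onto a neighborhood of $h$ in $C^{k+2}(X,M)$. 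In these coordinates I define a harmonic map operator $F:\Uc\times\mathcal B_0\to\mathcal Y$, where
\[
\mathcal Y:=\prod_{e\in E}C^k([0,1],h_e^\ast TM)\times\prod_{x\in V}T_{h(x)}M,
\]
by recording, for $f=\exp_h(V)$, the geodesic defect $\nabla^{G'}_{T_e}T_e$ on each edge and the balanced defect $\sum_{e\in E_x}m_E(e)T_e(0)$ at each vertex (with $T_e=\partial_t f_e$), each parallel-transported back along the radial geodesics $s\mapsto \exp_h(sV)$ to become a field along $h$. By the characterization \eqref{harmmap}, one has $F(G',V)=0$ if and only if $\exp_h(V)$ is a $G'$-harmonic map; in particular $F(G,0)=0$. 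The metric enters $F$ only through its Christoffel symbols, which depend smoothly on $G'\in{\rm Met}^{k+1}(M)$ and lie in $C^k$; this is exactly the regularity balance behind the indices (maps in $C^{k+2}$, metrics in $C^{k+1}$, values in $C^k$), and the standard composition-operator ($\Omega$-lemma) estimates then show that $F$ is jointly $C^k$.

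The crucial step is to prove that $D_VF(G,0):\mathcal B\to\mathcal Y$ is a Banach space isomorphism. Differentiating the defining expressions at $V=0$ yields, on each edge, the Jacobi operator $V_e\mapsto -\nabla_{T_e}\nabla_{T_e}V_e-R(V_e,T_e)T_e$ (in which $\nabla_{T_e}$ is the covariant $t$-derivative, so the leading part is the nondegenerate $-D^2/dt^2$), and at each vertex the linearized balanced condition $V\mapsto\sum_{e\in E_x}m_E(e)\nabla_{T_e}V_e(0)$; compare the second variation \eqref{Eq:secondVF} and the integration by parts in Lemma \ref{le2}. Integrating \eqref{Eq:secondVF} by parts identifies this operator with the one representing the Hessian, namely ${\rm Hess}_{E_G}(V,W)_h=\langle D_VF(G,0)V,\,W\rangle$ for the natural $L^2$-type pairing combining the weighted edge integrals with the vertex values. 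Hence the kernel of $D_VF(G,0)$ is trivial precisely because ${\rm Hess}_{E_G}$ is non-degenerate at $h$. Moreover $D_VF(G,0)$ is a regular two-point boundary value problem for a Jacobi-type ODE system on each edge, coupled through Kirchhoff-type conditions at the vertices; being formally self-adjoint for the above pairing, it is Fredholm of index zero, so its injectivity upgrades to bijectivity and its inverse is bounded by the open mapping theorem.

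With $D_VF(G,0)$ invertible and $F$ of class $C^k$, the implicit function theorem in Banach spaces produces an open neighborhood $\Uc$ of $G$ in ${\rm Met}^{k+1}(M)$ and a $C^k$ map $\Uc\ni G'\mapsto V(G')\in\mathcal B_0$ with $V(G)=0$ and $F(G',V(G'))=0$ for all $G'\in\Uc$. Setting $\wt h(G'):=\exp_h(V(G'))$ gives a $C^k$ map $\wt h:\Uc\to C^{k+2}(X,M)$ with $\wt h(G)=h$ such that each $\wt h(G'):X\to M$ is a $G'$-harmonic map, as required. The main obstacle is the isomorphism step: one must frame the coupled edge-and-vertex linearization as a self-adjoint, index-zero Fredholm boundary value problem, so that the non-degeneracy hypothesis — which directly gives only injectivity — also forces surjectivity, while keeping careful track of the regularity indices so that $F$ is genuinely $C^k$ rather than merely continuous.
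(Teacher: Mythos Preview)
Your approach is correct and differs from the paper's in an instructive way. The paper first passes to the submanifold $C^{k+2}_{bal}(X,M)$ of maps already satisfying the balanced condition (proving a lemma that this is a Banach submanifold with tangent space $\mathcal V^{k+2}_{h,G}$), so that the nonlinear operator records only the geodesic defect $(\nabla^{G'}_{T_e}T_e)_e$ with target $\prod_{e\in E_0}C^k([0,1],h_e^{-1}TM)$. Surjectivity of the linearization is then obtained by constructing a weak solution via the Riesz representation theorem on the Hilbert completion $H^{1,2}_{h,G}$, whose inner product is exactly the Hessian form; nonpositive curvature together with the non-degeneracy hypothesis is what makes this form positive definite. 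You instead keep the full map space and record both the geodesic and balanced defects, deducing surjectivity from a Fredholm index-zero argument. Your route avoids the submanifold lemma and, notably, does not actually use the nonpositive curvature assumption --- only non-degeneracy of ${\rm Hess}_{E_G}$ is needed --- so it is slightly more general; the paper's route gives a more explicit construction of the inverse and explains why the curvature hypothesis appears in the statement.

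One point you should tighten: ``formally self-adjoint for the above pairing, hence Fredholm of index zero'' is not quite a proof. Formal self-adjointness via the integration-by-parts identity ${\rm Hess}_{E_G}(V,W)=\langle D_VF(G,0)V,W\rangle$ gives symmetry but does not by itself yield the Fredholm property or the index. What you want is the standard ODE boundary-value argument: on each edge the Jacobi operator $V_e\mapsto -\nabla_{T_e}^2 V_e - R(V_e,T_e)T_e$ is surjective from $C^{k+2}$ to $C^k$ with $2n$-dimensional kernel ($n=\dim M$), the vertex-matching constraints cutting out $\mathcal B$ and the balanced-defect components added to $\mathcal Y$ are finite-rank conditions, and a dimension count ($2n|E_0|$ against $(2|E_0|-|V|)n$ matching constraints and $|V|n$ vertex equations) shows the index is zero. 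With that in place, your injectivity-implies-bijectivity step goes through.
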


There is a corresponding result where the domain and the target are Riemannian manifolds by Eells-Lemaire \cite[Theorem 3.1]{EL} and Koiso \cite[Theorem 4.7]{Koiso}. 
We shall prove by a simple application of the implicit function theorem between Banach spaces as in \cite{EL}. 
Since we have not found the argument adapted to our setting, we describe the setup in a self-contained manner in the following.

For any piecewise $C^k$-map $f:X \to M$, 
let us denote the vector space consisting of $C^k$-vector fields along $f$ by 
$C^k(X, f^{-1}TM)$.
We understand that every $v$ in $C^k(X, f^{-1}TM)$ is in $C^k$ on each interior $(0, 1)$ of edges and continuous on $X$.
We define a norm on $C^k(X, f^{-1}TM)$ by
\[
\|v\|_k:=\sum_{0\leq i\leq k}\underset{e\in E_0}{\rm sup}\sup_{t \in [0, 1]}\|\nabla^i_{\p_t f_e} v_e\|_{G},
\]
where $E_0$ is the set of unoriented edges,
for each edge $e\in E_0$, $v_e$ is the restriction of $v$ and $\nabla^i_{\p_t f_e}$ denotes the $i$-th covariant derivative along $f_e$ relative to the metric $G$. 
Then, $(C^k(X, f^{-1}TM), \|\cdot\|_k)$ becomes a Banach space.

Let $C^k(X, M):=\{f:X\to M:  f\ \text{ is a piecewise $C^k$-map}\}$. Then, $C^k(X, M)$ is a  Banach manifold modeled on the Banach space $(C^k(X, f^{-1}TM), \|\cdot \|_k)$. Namely,  for any $f\in C^k(X, M)$ and an open neighborhood $B\subset C^k(X, f^{-1}TM)$ of $0$, we define the {\it exponential map} ${\rm exp}_f:B\to C^k(X,M)$ by 
$
({\rm exp}_fv)(x):={\rm exp}_{f(x)}v_{x},
$
where ${\rm exp}_{f(x)}: T_{f(x)}M\to M$ denotes the exponential map of the Riemannian manifold $(M,G)$. Then, for an open neighborhood $B$,  the map ${\rm exp}_f^{-1}: {\rm exp}_f(B)\to B$ gives a local chart  around the point $f$.

We consider the subset of $C^k(X, M)$ consisting of piecewise $C^k$-maps satisfying the balanced condition:
\[
C^k_{bal}(X,M):=\Big\{f\in C^k(X,M): \sum_{e\in E_x}m_E(e)\p_t f_e (x)=0 \ \ \text{for any $x\in V$}\Big\},
\]
where $\p_t f_e=(d/dt)f_e$.
Note that the set $C^k_{bal}(X,M)$ is not empty since there always exists a harmonic map, which satisfies the balanced condition.
For any $f \in C^k_{bal}(X,M)$, let us define the Banach space
\[
\mathcal{V}_{f, G}^k:=\Big\{v\in C^k(X, f^{-1}TM): \nabla^G_{v_x}\cdot (\sum_{e\in E_{x}}m_E(e) {\p_t{f}_e}(x))=0 \ \ \text{for any $x\in V$}\Big\},
\]
as a closed subspace of $C^k(X, f^{-1}TM)$.
Note that the condition in the definition $\mathcal{V}_{f, G}^k$
is equivalent to
\[
\sum_{e\in E_{x}}m_E(e) \nabla^G_{\p_t{f}_e}v_e(x)=0 \quad \text{for any $x\in V$}
\]
since $[v_e, \p_t{f}_e]=0$ for any $e \in E$.

\begin{lemma}
For any integer $k \ge 1$,
$C^k_{bal}(X,M)$ is a submanifold of $C^k(X,M)$ and for any $C^k$-metric $G$ on $M$ and any $f \in C^k_{bal}(X,M)$,
the tangent space of $C^k_{bal}(X,M)$ at $f$ is isomorphic to the Banach space $\mathcal{V}_{f, G}^k$.
\end{lemma}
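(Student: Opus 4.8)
The plan is to exhibit $C^k_{bal}(X,M)$ locally as the zero set of a submersion and then read off its tangent space from the implicit function theorem for Banach manifolds. Fix $f \in C^k_{bal}(X,M)$ and work in the exponential chart $v \mapsto {\rm exp}_f v$ on an open neighborhood $B$ of $0$ in $C^k(X, f^{-1}TM)$. The balanced condition is a finite family of vector equations indexed by $V$, so it is natural to encode it by a map into the finite-dimensional space $W_f:=\bigoplus_{x\in V}T_{f(x)}M$. The only subtlety is that the tangent vectors $\p_t({\rm exp}_f v)_e(x)$ live in $T_{{\rm exp}_{f(x)}v_x}M$, a fiber that moves with $v$; I would remedy this by parallel-transporting back along the geodesic $r\mapsto {\rm exp}_{f(x)}(rv_x)$. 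Denoting this parallel transport by $P_{v_x}:T_{f(x)}M\to T_{{\rm exp}_{f(x)}v_x}M$, I set
\[
\Phi_f(v):=\Big(P_{v_x}^{-1}\sum_{e\in E_x}m_E(e)\,\p_t({\rm exp}_f v)_e(x)\Big)_{x\in V}\in W_f,
\]
so that $C^k_{bal}(X,M)\cap {\rm exp}_f(B)=\Phi_f^{-1}(0)$. This $\Phi_f$ is differentiable, its construction involving only the exponential map and one covariant derivative, both smooth operations on the relevant function spaces for the given regularity of $G$, following the scheme of Eells--Lemaire \cite{EL}.

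Next I would compute $d\Phi_f|_0$. For $v\in C^k(X,f^{-1}TM)$, consider the curve $s\mapsto \Phi_f(sv)$. At $s=0$ the inner sum equals $\sum_{e\in E_x}m_E(e)\p_t f_e(x)$, which vanishes precisely because $f\in C^k_{bal}(X,M)$; hence in the product rule the contribution from differentiating $P_{sv_x}^{-1}$ is applied to the zero vector and drops out. Using $[v_e,\p_t f_e]=0$ and the symmetry of $\nabla$, the remaining term gives
\[
d\Phi_f|_0(v)=\Big(\sum_{e\in E_x}m_E(e)\,\nabla_{\p_t f_e}v_e(x)\Big)_{x\in V}.
\]
By the reformulation of the defining condition recorded just before the lemma, the kernel of $d\Phi_f|_0$ is exactly $\mathcal{V}_{f,G}^k$.

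It then remains to check that $d\Phi_f|_0$ is surjective, which I regard as the main point. Given $(w_x)_{x\in V}\in W_f$, I would build a field $v$ vanishing at every vertex, so that continuity of $v$ across $X$ is automatic, and realize the prescribed values through the first covariant derivatives at the edge endpoints. On each unoriented edge $e$, choosing a parallel frame along $f_e$ reduces the problem to interpolating a $C^k$ vector-valued function on $[0,1]$ with zero boundary values and arbitrarily prescribed first derivatives at $0$ and $1$; the origin-derivative $\nabla_{\p_t f_e}v_e(0)$ feeds the vertex $o(e)$, while $\nabla_{\p_t f_{\wbar e}}v_{\wbar e}(0)=-\nabla_{\p_t f_e}v_e(1)$ feeds $t(e)$, and these two pieces of data are independent. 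Assigning to each vertex a private incident oriented edge, I can solve $\sum_{e\in E_x}m_E(e)\nabla_{\p_t f_e}v_e(x)=w_x$ simultaneously for all $x$, which proves surjectivity. Since $\dim W_f=|V|\cdot\dim M<\infty$, the kernel $\mathcal{V}_{f,G}^k$ is closed of finite codimension and therefore complemented, so $\Phi_f$ is a submersion at $0$. The implicit function theorem now shows that $\Phi_f^{-1}(0)$ is a Banach submanifold near $f$ with $T_fC^k_{bal}(X,M)=\ker d\Phi_f|_0=\mathcal{V}_{f,G}^k$; as $f\in C^k_{bal}(X,M)$ was arbitrary, the lemma follows. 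The delicate step is this surjectivity together with the observation that the moving-fiber correction disappears exactly because $f$ is itself balanced; the regularity bookkeeping for $\Phi_f$ is routine.
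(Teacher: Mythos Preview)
Your argument is correct: $\Phi_f$ is a smooth submersion at $0$, its kernel is $\mathcal{V}_{f,G}^k$, and the finite codimension of the kernel makes the implicit function theorem applicable. The surjectivity construction via a ``private'' edge at each vertex works because the first covariant derivatives of $v$ at the two endpoints of an unoriented edge are genuinely independent interpolation data once $v$ vanishes at both endpoints.

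The paper proceeds differently and somewhat more directly. Working in geodesic normal coordinates at $f(x)$, it observes that for every $v$ in the chart domain---not merely infinitesimally---one has
\[
\sum_{e\in E_x}m_E(e)\,\p_t(\exp_f v)_e(x)=(d\exp)_{(f(x),v_x)}\Big(0,\ \sum_{e\in E_x}m_E(e)\,\nabla_{\p_t f_e}v_e(x)\Big),
\]
the first slot vanishing precisely because $f$ is balanced. Since $(d\exp)_{(f(x),v_x)}(0,\cdot)=d(\exp_{f(x)})_{v_x}$ is a linear isomorphism for small $v_x$, the balanced condition for $\exp_f v$ is \emph{equivalent} to $v\in\mathcal{V}_{f,G}^k$. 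Thus, in the exponential chart, $C^k_{bal}(X,M)$ is exactly the linear subspace $\mathcal{V}_{f,G}^k$, and the restricted exponential map is the chart. This bypasses both the parallel-transport bookkeeping and the separate surjectivity step. Your submersion route is the standard template and would survive perturbations of the constraint that destroy this exact linearity, while the paper's computation exploits the special structure to give an explicit submanifold chart in one stroke.
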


\begin{proof}
Fix an arbitrary $f\in C^k_{bal}(X, M)$. Since $\mathcal{V}_{f, G}^k$ is a closed subspace of $C^k(X, f^{-1}TM)$, 
it becomes a Banach space with the induced norm $\|\cdot\|_k$.   
For a sufficiently small $v\in C^k(X, f^{-1}TM)$ with respect to $\|\cdot\|_k$, we set $f^v:={\rm exp}_f\, v\in C^k(X, M)$. First, we show $f^v\in C^k_{bal}(X, M)$ if $v\in \mathcal{V}_{f,G}^k$.
We take a geodesic normal coordinate $(U,\{x^1,\ldots, x^m\})$ of $(M,G)$ around a point $f(x)$ for $x\in V$, and define $ {\rm exp}: U\times \mathbb{R}^m\to M$  by 
$
{\rm exp}(p, v^1\ldots, v^m)={\rm exp}_{p}\Big(\sum_{i=1}^mv^i\frac{\p}{\p x^i}\Big|_{p}\Big).
$
By definition of ${\rm exp}$ and $f^v$, we have
 \[
 \frac{d}{d t} {f}^v_e(x)=\frac{d}{dt} {\rm exp}(f_e(t), v_e(t))\Big|_{t=0}=(d{\rm exp})_{(f(x), v_x)}\(\p_t f_e(x), \nabla^G_{\p_t{f}_e}v_e(x)\).
 \]
for each $e\in E_x$, and thus we obtain
\begin{align}\label{dexp}
\sum_{e\in E_x}m_E(e)\frac{d}{d t} f^v_e(x)=(d{\rm exp})_{(f(x), v_x)}\Big(0, \sum_{e\in E_x}m_E(e)\nabla^G_{\p_t{f}_e}v_e(x)\Big)
\end{align}
since $f\in C^k_{bal}(X, M)$.  Therefore, if $v\in \mathcal{V}_{f,G}^k$, then  $f^v\in C^k_{bal}(X, M)$ as required. 
 
Thus, we obtain a well-defined map 
\[
{\rm exp}_f:  \mathcal{U}\cap \mathcal{V}_{f,G}^k\to {\rm exp}_f(\mathcal{U})\cap C^k_{bal}(X,M)
\]
for an open neighborhood $\mathcal{U}$ in $C^k(X, f^{-1}TM)$ around $0$, and this map is injective if $\Uc$ is small enough since it is a restriction of the exponential map $\exp_f$ to $\mathcal{V}_{f, G}^k$.
We shall show the map is surjective, i.e., for any $f^v\in {\rm exp}_f(\mathcal{U})\cap C^k_{bal}(X, M)$, the corresponding vector field $v:={\rm exp}^{-1}_f(f^v)\in C^k(X, f^{-1}TM)$ is actually contained in $\mathcal{V}_{f, G}^k$. By \eqref{dexp},  it is sufficient to show that $(d{\rm exp})_{(x, v_x)}(0,w)=0$ implies $w=0$. Indeed, we have
\[
(d{\rm exp})_{(f(x), v_x)}(0,w)=d({\rm exp}_{f(x)})_{v_x}(w),
\]
and hence, if $\Uc$ is small enough so that for any $v \in \Uc$ satisfies that $\|v\|_k<\epsilon$ for sufficiently small $\epsilon>0$ (for instance, we may take $\epsilon$ as  ${\rm inj}(M)>0$),  then $d({\rm exp}_x)_{v_x}$ is injective, and $w=0$; as required. 
This implies the lemma.
 \end{proof}

We define the map
\[
\t: C_{bal}^{k+2}(X, M) \times \Met^{k+1}(M) \to \prod_{e \in E_0}\(C^k([0, 1], f_e^{-1}TM)\),
\]
\[
\t(f, G):=\(\nabla_{\p_t f_e}^G \p_t f_e\)_{e \in E_0},
\]
where $E_0$ is the set of unoriented edges of $X$; for each $e \in E_0$, we identify $e$ with $[0, 1]$ and denote by $C^k([0, 1], f_e^{-1}TM)$ the space of $C^k$-vector fields along $f_e:[0, 1] \to M$.

Let us fix a piecewise smooth harmonic map $h: X\to (M,G)$.
Taking a small enough open neighborhood $U_{bal}\subset \mathcal{V}_{h, G}^{k+2}$ of $0$, and
we identify ${\rm exp}_h(U_{bal})\subset C^{k+2}_{bal}(X, M)$ and $U_{bal}$.
For any $f \in U_{bal}$,
we identify
\[
\prod_{e \in E_0}\(C^k([0, 1], f_e^{-1}TM)\) \quad \text{and} \quad \prod_{e \in E_0}\(C^k([0, 1], h_e^{-1}TM)\)
\] 
via parallel transport relative to $\nabla^G$ along the curve $\gamma_f(t,x):={\rm exp}_{h(x)}(tv(x))$ where $f={\rm exp}_h\, v\in U_{bal}$.

We take any smooth curve $s \mapsto (f_s, G_s)$ in $U_{bal}\times {\rm Met}^{k+2}(M)$ through $(h, G)$ at $s=0$ for $s \in (-\e, \e)$ and for $\e>0$. 
Let us write $\t(f_s, G_s)=(\t_{s, e})_{e \in E_0}$ for $s \in (-\e, \e)$.
Then, by the equations \eqref{c4} and \eqref{c41} given in the proof of Lemma \ref{le3} (Note that the equations \eqref{c4} and \eqref{c41} hold for any Riemannian manifold as a target manifold), we see on each $e \in E_0$,
\begin{align}\label{derivtau}
\frac{d\tau_{s,e}}{ds}\Big{|}_{s=0}=\nabla_{T_e}\nabla_{T_e}V_e+R(V_e, T_e)T_e+\frac{d}{ds}\Big{|}_{s=0} \nabla^{G_s}_{T_e}T_e,
\end{align}
where $T_e:=\p_t h_e$ and $V_e:=(\partial_s f_{s, e})|_{s=0}$. 
Note that the final term in the right hand side depends only on $G_s$ and the initial condition $h$.

\begin{proof}[Proof of Proposition \ref{smooth dependence}] 
For a harmonic map $h:X \to (M, G)$,
we take an open neighborhood $U_{bal}$ of $h$ in $\mathcal{V}_{h, G}^{k+2}$ and an open neighborhood $\mathcal{U}\subset {\rm Met}^{k+1}(M)$ of $G$.
We regard $\tau$ as a $C^k$-map between Banach spaces:
\begin{align*}
\tau: U_{bal}\times \mathcal{U}\to \prod_{e \in E_0}\(C^k([0, 1], h_e^{-1}TM)\).
\end{align*}
We shall show that the derivative of $\t$ in the $U_{bal}$-component at $o=(h, G)$
\[
d\tau|_{U_{bal}, o}: \mathcal{V}_{h, G}^{k+2}\to \prod_{e \in E_0}\(C^k([0, 1], h_e^{-1}TM)\)
\]
is an  isomorphism. 

By \eqref{derivtau}, we have that $d\tau_e|_{U_{bal}, o}(V)=\nabla_{T_e}\nabla_{T_e}V_e+R(V_e, T_e)T_e$ on each $e \in E_0$ for any $V\in \mathcal{V}_{h, G}^{k+2}$ since the last term in \eqref{derivtau} is independent of the deformation $f_s$ of $h$ relative to $V$.
Thus, $d\tau|_{U_{bal}, o}$ is a bounded linear map.  
First we show that $d\tau|_{U_{bal}, o}$ is injective.
Indeed,
if $d\tau|_{U_{bal}, o}(V)=0$, then taking the deformation $f_s$ relative to $V$ in $C^{k+2}_{bal}(X, M)$ for any $s\in (-\e, \e)$ for $\e>0$, we see that
\begin{align*}
0&=\sum_{e\in E}m_E(e)\int_0^1 G(-\nabla_{T_e}\nabla_{T_e}V_e-R(V_e, T_e)T_e, V_e)\, dt 
+\sum_{e \in E}m_E(e)G_0(\nabla_{T_e}V_e, V_e)\Big|_{t=0}^{t=1}\\
&={\rm Hess}_{E_G}(V,V)_h,
\end{align*}
where we used Lemma \ref{le2} in the first equality which follows from the assumption $f_s\in C^{k+2}_{bal}(X,M)$.
The assumption in the statement implies that ${\rm Hess}_{E_G}$ is non-degenerate at $h$, we obtain $V=0$, and thus $d\tau|_{U_{bal}, o}$ is injective.  

Next we show that $d\tau|_{U_{bal}, o}$ is surjective.
For any given $W=(W_e)_{e \in E_0}$ where $W_e \in C^k([0, 1], h_e^{-1}TM)$ for each $e \in E_0$, we consider the second order ordinary differential equations
\begin{align}\label{dtauw}
\nabla_{T_e}\nabla_{T_e}V_e+R(V_e, T_e)T_e=W_e
\end{align}
on each $e \in E_0$ with the condition that $V$ is in $\mathcal{V}_{h, G}^{k+2}$, i.e., $V$ solves (\ref{dtauw}) on each interior $(0, 1)$ of edges $e$, and is continuous on $X$
satisfying the balanced condition. 
Let us consider the Hilbert space $H^{1, 2}_{h, G}$ as the completion of $\mathcal{V}_{h, G}^{k+2}$ endowed with the inner product
\[
\langle V^1, V^2\rangle_{H_{h, G}^{1, 2}}:=\sum_{e \in E}m_E(e)\int_0^1\Big{\{}G(\nabla_{T_e}V_e^1, \nabla_{T_e}V_e^2)-G(R(V_e^1, T_e)T_e, V_e^2)\Big{\}}\,dt,
\]
where this indeed defines the positive definite inner product since $(M, G)$ has nonpositive sectional curvature and ${\rm Hess}_{E_G}$ is non-degenerate at $h$.
Note that every $V$ in $H^{1, 2}_{h, G}$ is continuous on $X$.
For any $W \in \prod_{e \in E_0}\(C^k([0, 1], h_e^{-1}TM)\)$, we define a linear functional $L_W$ on $H^{1, 2}_{h, G}$ by
\[
L_W(\f):=\sum_{e \in E}m_E(e)\int_0^1 G(W_e, \f_e)\,dt, \quad \text{for $\f \in H^{1, 2}_{h, G}$}.
\]
Then, $L_W$ is a bounded linear functional on $H^{1, 2}_{h, G}$; in fact, the natural embedding map $H^{1, 2}_{h, G} \to C^0(X, h^{-1}TM)$ is compact.
Therefore the Riesz representation theorem implies that there exists a unique $V$ in $H^{1, 2}_{h, G}$ such that
$\langle V, \f\rangle_{H_{h, G}^{1, 2}}=-L_W(\f)$ for any $\f$ in $H^{1, 2}_{h, G}$.
This implies that $V$ solves (\ref{dtauw}) on each interior $(0, 1)$ of edges $e$ and $V_e$ is in $C^{k+2}$ on $(0, 1)$ by taking smooth functions $\f_e$ whose supports are included in $(0, 1)$ for each $e \in E_0$.
Then, integration by parts gives
\begin{align*}
\sum_{e \in E}m_E(e)\int_0^1\Big{\{}G(-\nabla_{T_e}\nabla_{T_e}V_e-R(V_e, T_e)T_e, \f_e)\Big{\}}\,dt &-2\sum_{x \in V}\sum_{e \in E_x}G(\nabla_{T_e}V_e, \f_e)\\
&=-\sum_{e \in E}m_E(e)\int_0^1 G(W_e, \f_e)\,dt,
\end{align*}
and
$\sum_{x \in V}\sum_{e \in E_x}G(\nabla_{T_e}V_e, \f_e)=0$ holds for any $\f$ in $H^{1, 2}_{h, G}$.
Therefore $V$ satisfies the balanced condition and is in $V_{h, G}^{k+2}$.
Hence $d\tau|_{U_{bal}, o}$ is surjective.

Now, the implicit function theorem for Banach spaces implies that there exist an open neighborhood $\mathcal{U}'\subset \mathcal{U}$ of $G$ and a unique $C^k$-map 
$\wt h: \mathcal{U}'\to U_{bal}$ satisfying that
\[
\tau(\wt h(G'),G')=0
\]
on $\mathcal{U}'$. Then, $\wt h(G): X\to M$ is a harmonic map relative to the metric $G'$ for any $G' \in \Uc'$ and $\wt h(G)=h$, and we complete the proof. 
\end{proof}

\subsection*{Acknowledgements} 
 The authors would like to thank Professor Sumio Yamada for suggesting us the problem and a number of helpful and inspiring discussions, Professor Motoko Kotani for helpful discussions concerning on the standard realizations in an early stage of this project and for providing opportunities that made this collaboration possible, and Professors Jayadev Athreya and Toshiyuki Sugawa for useful comments. 
 T.K.\ is supported by JSPS KAKENHI Grant Number JP18K13420, and 
 R.T.\ is supported by JSPS KAKENHI Grant Numbers JP17K14178 and JP20K03602.

\bibliographystyle{alpha}
\bibliography{harmonic_map}

\end{document}